\newtheorem{thm}{Theorem}[section]
\newtheorem{cor}[thm]{Corollary}
\newtheorem{lem}[thm]{Lemma}
\newtheorem{prop}[thm]{Proposition}
\newtheorem{claim}[thm]{Claim}
\newtheorem{obs}[thm]{Observation}
\theoremstyle{definition}
\newtheorem{defn}[thm]{Definition}
\newtheorem{notation}[thm]{Notation}
\theoremstyle{remark}
\newtheorem{rem}[thm]{Remark}
\numberwithin{equation}{section}
\newcommand{\set}[1]{\left\{#1\right\}}
\newcommand{\eps}{\varepsilon}
\newcommand{\xto}{\xrightarrow}
\newcommand{\xfrom}{\xleftarrow}
\newcommand{\ob}{\mathrm{ob}}
\newcommand{\catH}{\mathscr{H}}
\newcommand{\catHfd}{\mathscr{H_{<\infty}}} 
\newcommand{\catA}{\mathscr{A}}
\newcommand{\catD}{\mathscr{D}}
\newcommand{\catV}{\mathscr{V}}
\newcommand{\Set}{Set}
\newcommand{\LM}[1]{#1 \text{-}\mathbf{Mod}} 
\newcommand{\RM}[1]{\mathbf{Mod}\text{-}#1} 
\newcommand{\ABimodH}{A\text{-}\mathbf{Mod}\text{-}A(\catH)} 
\newcommand{\bodka}{\centerdot}
\newcommand{\field}{\mathbb k}
\newcommand{\id}{\mathrm{id}}
\newcommand{\argument}{\text{\textvisiblespace}}
\newcommand{\ld}[1]{\vphantom{#1}^*#1} 
\newcommand{\rd}[1]{#1^*} 
\newcommand{\tp}{\otimes}
\newcommand{\comp}{\bullet} 
\newcommand{\daco}{\bullet} 
\newcommand{\innH}{\underline\catH}
\newcommand{\srdiecko}{\heartsuit}
\newcommand{\budzogan}{\mathop{\clubsuit}}
\newcommand{\phii}{\widehat{\phi}}
\newcommand{\Phii}{\widehat{\Phi}}
\newcommand{\Phiii}{\widehat{\widehat\Phi}}
\newcommand{\jedna}{_{(1)}}
\newcommand{\dva}{_{(2)}}
\newcommand{\jednajedna}{_{(1)(1)}}
\newcommand{\jednadva}{_{(1)(2)}}
\newcommand{\dvajedna}{_{(2)(1)}}
\newcommand{\dvadva}{_{(2)(2)}}
\newcommand{\eeps}{\mathbb{\bbespilon}}
\newcommand{\eeta}{\mathbb{\bbeta}}
\newcommand{\sipka}{\text{\ding{212}}}
\newcommand{\ams}{\mu}
\newcommand{\centre}{\mathcal{Z}}
\newcommand{\forg}{\mathrm{forg}}
\newcommand{\forme}[1]{}
\newcommand{\obr}[1]{\textcolor{blue}{obrazok #1}}
\newcommand{\cesta}{./obrazky}
\newcommand{\footnoteremember}[2]{
\footnote{#2}
\newcounter{#1}
\setcounter{#1}{\value{footnote}}
}
\newcommand{\footnoterecall}[1]{
\footnotemark[\value{#1}]
}
\begin{document}


\title{On Categories associated to a Quasi-Hopf algebra}
\author{\v Stefan Sak\'alo\v s}
\thanks{\v Stefan Sak\'alo\v s was supported by the ProDoc program ``Geometry, Algebra and Mathematical Physics'' and the grant PDFMP2\_137071 of the Swiss National Science Foundation.}
\address{Department of Mathematics, Universit\'{e} de Gen\`{e}ve, Geneva, Switzerland; Dept. of Theoretical Physics, FMFI UK, Bratislava, Slovakia}
\email{pista.sakalos@gmail.com}

%
%

\begin{abstract}
A quasi-Hopf algebra $H$ can be seen as a commutative algebra $A$ in the centre $\centre (\LM H)$ of $\LM H$. We show that the category of $A$-modules in $\centre (\LM H)$ is equivalent (as a monoidal category) to $\LM H$. This can be regarded as a generalization of the structure theorem of Hopf bimodules of a Hopf algebra to the quasi-Hopf setting.
\end{abstract}

\maketitle

\section{Introduction}
Let $H$ be a quasi-Hopf algebra. Denote $\catH := \LM H$ and $\catD:= \centre(\LM H) $.

$H$ equipped with its own adjoint action becomes an object $A\in \catH$.
If $H$ is a Hopf algebra then the left coaction by comultiplication turns $A$ into a Yetter-Drinfeld module and the original multiplication makes $A$ into a braided-commutative algebra in $\catD$. 
The construction of $A$ was generalized in \cite{Bulacu1}, \cite{Bulacu2} and \cite{Bulacu3} to work also if $H$ is a quasi-Hopf algebra. We give a category-theoretical description of $A$ in Section \ref{section_A}.

Denote by $\catA$ the monoidal category of right $A$-modules with $\tp_A$ for the monoidal structure. Our main result is:
\begin{thm}\label{thm_aaa}
$\catA$ is equivalent to $\catH$ as a monoidal category.
\end{thm}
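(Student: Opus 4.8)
The plan is to exhibit an explicit monoidal equivalence $F\colon \catH \to \catA$ together with a quasi-inverse $G$, following the template of the classical fundamental/structure theorem for Hopf modules but carrying the reassociator and the antipode data of $H$ through every step. The starting point is the description of $A$ from Section~\ref{section_A}: the forgetful functor $U\colon \catD \to \catH$ is strong monoidal and admits a right adjoint $R\colon \catH \to \catD$ with $A = R(\mathbf 1)$. Being right adjoint to a strong monoidal functor, $R$ is lax monoidal; its lax structure makes $R(\mathbf 1)=A$ an algebra in $\catD$ (the algebra of Section~\ref{section_A}) and turns each $R(X)$ into a right $A$-module in $\catD$ via $R(X)\tp R(\mathbf 1)\to R(X\tp\mathbf 1)=R(X)$. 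This defines the functor $F\colon \catH\to\catA$, $F(X)=R(X)$, with $F(\mathbf 1)=A=\mathbf 1_\catA$.

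First I would promote $F$ to a monoidal functor. The lax constraint $R(X)\tp R(Y)\to R(X\tp Y)$ coequalizes the two $A$-actions (using commutativity of $A$), hence descends to a comparison morphism $F(X)\tp_A F(Y)\to F(X\tp Y)$, and the unit constraint of $R$ gives the unit comparison. The first genuine claim is that this comparison is an isomorphism, i.e. that the lax structure of $R$ becomes strong after passing to $\tp_A$; this is a projection-formula (Frobenius reciprocity) statement for the adjunction $U\dashv R$, and it must moreover be checked to intertwine the associativity constraint of $\catA$ — built from $\tp_A$ and ultimately from the reassociator of $H$ — with that of $\catH$.

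For the equivalence itself I would construct the quasi-inverse by coinvariants. The counit $\eps$ exhibits $\mathbf 1$ as a left $A$-module in $\catD$, and I set $G(M)=U\!\left(M\tp_A\mathbf 1\right)$, the object of $A$-coinvariants of $M$ viewed in $\catH$. The isomorphism $GF\cong\id_\catH$ then reduces to the computation $R(X)\tp_A\mathbf 1\cong X$, which follows from $A\tp_A\mathbf 1\cong\mathbf 1$ together with the explicit ``free'' form of $R(X)$ recalled in Section~\ref{section_A}. The opposite composite $FG\cong\id_\catA$ is the fundamental-theorem direction: one must show that every right $A$-module $M$ in $\catD$ is recovered from its coinvariants, $M\cong R\!\left(M\tp_A\mathbf 1\right)$. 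Here I would build an explicit splitting of the action onto the coinvariants from the quasi-antipode $(S,\alpha,\beta)$, and verify that the induced comparison map is a two-sided inverse.

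I expect the main obstacle to be precisely this last step together with the coherence bookkeeping of the monoidal structure. In the ordinary Hopf case the projection onto coinvariants and the proof that it trivializes $M$ use strict associativity; in the quasi-Hopf setting every instance of the action, of the half-braiding, and of the tensor products is twisted by the reassociator $\Phi$ (and the antipode only satisfies the weakened axioms involving $\alpha,\beta$), so both the well-definedness of the splitting and the compatibility of the associativity constraints of $\catA$ and $\catH$ require careful cocycle manipulations. Showing that the comparison $F(X)\tp_A F(Y)\to F(X\tp Y)$ respects these twisted associators is, I expect, the most delicate point; once it is in place, full faithfulness and essential surjectivity of $F$ follow from the two natural isomorphisms above, yielding the monoidal equivalence $\catA\simeq\catH$.
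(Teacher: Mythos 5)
Your skeleton coincides with the paper's: your $F$ is exactly the paper's $\srdiecko M=\int_{X}\innH(X,X\tp M)$ (the end realizing the right adjoint of the forgetful functor $\catD\to\catH$, with $A=\srdiecko I$), your $G$ is the paper's $\budzogan=\argument\tp_A I$, and the proof consists of the same two composite isomorphisms. One immediate correction: the augmentation making $I$ an $A$-module is a morphism in $\catH$ only, \emph{not} in $\catD$ (the paper flags this explicitly when defining $\budzogan$), so $M\tp_A I$ must be formed in $\catH$; your phrase ``the counit exhibits $\mathbf 1$ as a left $A$-module in $\catD$'' is false, and it blocks several of the purely formal reductions you lean on, since naturality arguments inside $\catD$ are not available for this particular module.

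The genuine gaps are in the two composites. For $GF\cong\id_\catH$ you reduce to ``$A\tp_A I\cong I$ plus the explicit free form of $R(X)$,'' but the freeness isomorphism $\srdiecko M\cong M\tp A$ is only established for $M\in\catD$ --- its construction uses the half-braiding of $M$ to move $M$ past the integration variable --- whereas here you need it for an arbitrary $X\in\catH$. This is precisely where the paper does real work: it compares the presentation $\srdiecko X\tp A\to\srdiecko X\to X\to 0$ with the free presentation of $X\tp A$ carrying the $\Phi$-twisted action $(\id\tp\comp)\circ\Phi$, via an isomorphism built by hand from invertible elements $\kappa,\lambda\in H^{\tp 5}$ (diagram (\ref{diag_slnko})). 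Nothing in your sketch produces that comparison. For $FG\cong\id_\catA$ you propose an antipode-built splitting onto coinvariants, as in the classical Hopf-module theorem; the paper's proof does not do this and needs no such splitting: it uses right-exactness of $\srdiecko$ applied to $M\tp A\xto{\ams-\id\tp\eps_A}M\to\budzogan M\to 0$ to factor $\ams\circ s_M$ through $\srdiecko p_M$, and builds the inverse $\zeta_M$ from the unit of $A$; the quasi-antipode enters only through the already-established formulas for $\eeps$. Your alternative would require constructing and verifying a projection from the weakened axioms \eqref{H1}--\eqref{H4}, which you defer entirely. Finally, the paper does not prove a projection formula to make $\srdiecko$ strong monoidal; it proves $\budzogan$ strong monoidal directly and lets the equivalence upgrade the lax structure on $\srdiecko$ --- a cheaper route than the one you announce. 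In short: right functors, right outline, but the two steps that constitute the theorem are asserted rather than proved, and one of the lemmas you cite for the first step does not hold in the generality you need.
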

In Section \ref{section_ordinary_Hopf} we show that if $H$ is a Hopf algebra then this statement is equivalent to the structure theorem for Hopf bimodules (see e.g. \cite{HN}). Thus one can regard Theorem \ref{thm_aaa} as a generalization of this classical result. (There is another generalization in \cite{HN}.)

Most of the article is taken up by the proof of the above theorem. In Sections \ref{section_srdiecko} and \ref{section_budzogan} we define functors 
$$\srdiecko: \catH \to \catA; \quad\quad \budzogan: \catA \to \catH$$ and show that $\budzogan \circ \srdiecko \cong \id_\catH$. We finish the proof in Section \ref{section_XiDzeta}.

The motivation for the Theorem \ref{thm_aaa} comes from the effort to understand the dequantization procedure of Etingoff and Kazhdan \cite{EK2} also for quasi-Lie bialgebras (in the spirit of \cite{SS}). 

\section*{Acknowledgement}
Pavol \v Severa introduced me to the problem and proposed the main theorem of this article. He also supported me all the time by advice and friendship.


\section{Drinfeld formulas} \label{sect_Drinfeld_formulas}
We recall some definitions from \cite{Drinfeld1}.
\begin{defn}
A quasi-bialgebra is an associative algebra $H$ together with algebra morphisms $\Delta:H \to H \otimes H$; $\eps: H\to \field$ and an invertible element $\Phi\in H\otimes H \otimes H$ satisfying 
\begin{align}
\label{B1} &(\id\otimes\Delta)\circ\Delta(a)=\Phi\cdot\big[(\Delta\otimes \id)\circ\Delta(a)\big]\cdot \Phi^{-1}\\ 
\label{B2} &(\id\otimes \id\otimes\Delta)(\Phi)\cdot(\Delta\otimes \id\otimes \id)(\Phi)=(1\otimes     \Phi)\cdot(\id\otimes\Delta\otimes \id)(\Phi)\cdot (\Phi\otimes 1)\\ 
\label{B3} &(\eps\otimes \id)\circ \Delta=\id\;;\quad\quad (\id\otimes\eps)\circ\Delta=\id\\
\label{B4} &(\id\otimes\eps\otimes \id)\Phi=1\otimes 1 
\end{align}
A quasi-Hopf algebra has in addition to the above structure an algebra antiautomorphism\forme{\footnote{Thus, unlike in the definition of a Hopf algebra, a quasi-Hopf algebra antipode $S$ is required to be invertible and the fact that $Sa\cdot Sb=S(ba)$ is a part of the definition (I don't actually know if it can't be derived from the rest.).}} $S:H\to H$ and two elements $\alpha,\beta\in H$ satisfying
\begin{align}
&\label{H1} \big(S(a_{(1)})\big)\cdot\alpha\cdot a_{(2)}=\eps(a)\cdot\alpha\\ 
&\label{H2} a_{(1)}\cdot\beta\cdot \big(S(a_{(2)})\big)=\eps(a)\cdot\beta\\
&\label{H3}\Phi^1\cdot\beta\cdot S(\Phi^2)\cdot\alpha\cdot\Phi^3=1\\
&\label{H4} S(\phi^1)\cdot\alpha\cdot\phi^2\cdot\beta\cdot S(\phi^3)=1
\end{align} 
Here we denoted $\Phi=:\Phi^1\otimes\Phi^2\otimes \Phi^3$ and $\Phi^{-1}=:\phi^1\otimes\phi^2\otimes \phi^3$ and we don't write sums $\sum$.
\end{defn}
\begin{prop} \label{prop_strict_category}
For a quasi-Hopf algebra $H$ the category of finite dimensional left $H$-modules is rigid. 

The left dual $\ld{M}$ is the usual dual vector space of $M$ equipped with the $H$-action $a\rhd f:=f(Sa\rhd\argument)$ and the evaluation and coevaluation morphisms are given by the formulas
\begin{align}
&\ld{ev}:\ld{M}\otimes M\to\field:f\otimes m\mapsto f(\alpha\rhd m)\\
&\ld{coev}:\field\to M\otimes\ld{M}:1\mapsto \sum_i(\beta\rhd e_i)\otimes e^i
\end{align}
where $\set{e_i}$ denote a vector space basis of $M$ and $\set{e^i}$ the dual basis.

The right dual $\rd{M}$ is the usual dual vector space of $M$ equipped with the $H$-action $a\rhd f:=f(S^{-1}a\rhd\argument)$ and the evaluation and coevaluation morphisms are given by the formulas
\begin{align}
&\rd{ev}:M\otimes \rd{M}\to\field:m\otimes f\mapsto f(S^{-1}\alpha\rhd m)\\
&\rd{coev}:\field\to \rd{M}\otimes M:1\mapsto \sum_i{e^i\otimes (S^{-1}\beta\rhd e_i)}
\end{align}
\end{prop}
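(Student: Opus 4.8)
The plan is to verify the rigidity claim directly by checking the zig-zag (triangle) identities for the proposed evaluation and coevaluation morphisms, after first confirming that these maps are in fact $H$-module morphisms. Everything reduces to the axioms \eqref{B1}--\eqref{B4} and \eqref{H1}--\eqref{H4}, so the proof is essentially a bookkeeping exercise in the non-coassociative calculus of $H$; the art is in organizing it.

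First I would record the module structure on $\ld{M}$ and $\rd{M}$: since $S$ is an algebra antiautomorphism, $a \rhd f := f(Sa \rhd \argument)$ genuinely defines a left action (the anti-multiplicativity of $S$ compensates the reversal), and similarly for $S^{-1}$. Next I would check that $\ld{ev}$ is a morphism $\ld{M} \otimes M \to \field$ in $\catH$. Unwinding, this means $\eps(a)\, f(\alpha \rhd m) = (a\jedna \rhd f)\big(a\dva \rhd (\alpha \rhd m)\big) = f\big(S(a\jedna)\cdot \alpha \cdot a\dva \rhd m\big)$ for all $a$, which is exactly axiom \eqref{H1}. Dually, that $\ld{coev}$ is a morphism amounts to the identity $\sum_i (a\jedna\beta \rhd e_i) \otimes (S(a\dva) \rhd e^i) = \eps(a) \sum_i (\beta \rhd e_i)\otimes e^i$, which after pairing against an arbitrary element reduces to \eqref{H2}. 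The right-dual analogues use the same two axioms with $S$ replaced by $S^{-1}$, noting that applying $S^{-1}$ to \eqref{H1}--\eqref{H2} and relabelling gives the required variants.

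The substantive part is the two triangle identities. For the left dual I must show that the composite
\[
M \xrightarrow{\ld{coev}\otimes \id} (M \otimes \ld{M})\otimes M \xrightarrow{\;\Phi\;} M \otimes (\ld{M}\otimes M) \xrightarrow{\id \otimes \ld{ev}} M
\]
is the identity, and symmetrically that $\ld{M} \to \ld{M}\otimes(M\otimes\ld{M}) \to (\ld{M}\otimes M)\otimes \ld{M} \to \ld{M}$ is the identity, where now the associator is applied in the other direction (i.e.\ via $\Phi^{-1}$). Writing $\Phi = \Phi^1\otimes\Phi^2\otimes\Phi^3$, the first composite sends $m$ to $\sum_i \Phi^1\beta \rhd e_i \cdot e^i\big(S(\Phi^2)\,\alpha\,\Phi^3 \rhd m\big) = \Phi^1\cdot\beta\cdot S(\Phi^2)\cdot\alpha\cdot\Phi^3 \rhd m$, which is $m$ precisely by \eqref{H3}; the second composite, after the analogous computation with $\Phi^{-1} = \phi^1\otimes\phi^2\otimes\phi^3$ and tracking how $S$ acts on the dual, collapses to $S(\phi^1)\cdot\alpha\cdot\phi^2\cdot\beta\cdot S(\phi^3)\rhd\argument$ applied in the dual, which is the identity by \eqref{H4}. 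The right-dual triangle identities are handled the same way, now invoking $S^{-1}$ applied to \eqref{H3}--\eqref{H4} (together with \eqref{B4}, which ensures the middle leg $\Phi^2$ pairings behave as in the strict case); one also uses that $\eps\circ S = \eps$.

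The main obstacle is purely notational: keeping track of where the associativity constraint $\Phi$ (versus $\Phi^{-1}$) is inserted, and of the fact that the dual module action twists by $S$, so that a morphism evaluated ``inside'' a dual reverses order and applies $S$. A careful choice of bracketing conventions at the outset — and the observation that the four Drinfeld axioms \eqref{H1}--\eqref{H4} were precisely engineered so that the two pairs (module-morphism property) and (triangle identity) each consume exactly one axiom on each side — makes the verification mechanical. I would also remark that finite-dimensionality is used only to guarantee the existence of the dual bases $\{e_i\}, \{e^i\}$ and hence of $\ld{coev}, \rd{coev}$; no further hypothesis is needed.
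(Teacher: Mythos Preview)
Your argument is correct and is exactly the standard verification. In the paper this proposition is merely recalled (from \cite{Drinfeld1}) without proof, so there is nothing to compare against; your write-up supplies precisely the expected proof, with each axiom playing its designated role: \eqref{H1} and \eqref{H2} give $H$-linearity of $\ld{ev}$ and $\ld{coev}$, and \eqref{H3} and \eqref{H4} give the two zig-zag identities. One small inaccuracy: the right-dual triangle identities do not actually need \eqref{B4}; they follow by applying $S^{-1}$ to \eqref{H3} and \eqref{H4} alone (e.g.\ the second one becomes $\phi^3\cdot S^{-1}\beta\cdot S^{-1}\phi^2\cdot S^{-1}\alpha\cdot\phi^1=1$, which is $S^{-1}$ applied to \eqref{H4}). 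Otherwise the sketch is complete.
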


We show in Appendix \ref{section_H_is_closed} that the category $\catH$ of all $H$-modules is closed.

\forme{
\section{The algebra $A$}
In the case when $H$ is a (non-quasi) Hopf algebra it is known that $H$ itself can be regarded as a commutative algebra $A$ in $\catD$. That is 
\begin{itemize}
\item $A=H$ as a vector space, 
\item the action of $H$ on $A$ is by left adjunction: $h\rhd a:= h_{(1)}\cdot a\cdot S(h_{(2)})$,
\item the Drinfeld-Yetter coaction is by left comultiplication
\item and finally, the product is the original product on $H$. 
\end{itemize} 
This construction has been fully generalized for $H$ quasi-Hopf in\footnote{In the first paper $A$ has been found as an algebra in $\catH$, in the second it was shown how $A$ is an object in $\catD$ and in the third the commutativity of $A$ in $\catD$ was proven.} \cite{Bulacu1}, \cite{Bulacu2} and \cite{Bulacu3}. In this section we give a category-theoretic explanation of these constructions (at least in the case when $H$ is finite dimensional and consequently $\catH$ is rigid).
\subsection{$A$ as an algebra in $\catH$} If we take whatever object $M$ of the rigid monoidal category $\catH$, then $M\otimes \ld{M}$ is an algebra in $\catH$ using the obvious multiplication:
$$\input{\cesta/obr0}\quad \comp:(M\otimes\ld{M})\otimes(M\otimes\ld{M})\to M\otimes\ld{M}$$
In particular $C\otimes \ld{C}$ is an algebra in $\catH$. As a vector space, $C\otimes\ld{C}\cong Lin(H,H)$ and we can regard $H$ as a vector subspace $A\subset Lin(H,H)$ if we assign to any $h\in H$ the left multiplication $l_h:H\to H:x\mapsto h\cdot x$. It turns out that
\begin{claim}
$A$ is an $H$-submodule of $C\otimes \ld{C}$ and is closed w.r.t. the multiplication $\comp$ on $C\otimes \ld{C}$.
\end{claim}
\begin{proof}
We do the proof by direct calculation in order to get the explicit formulae from \cite{Bulacu2}. The $H$-module action on $\ld{M}$ is given by $h \rhd\beta=\beta(Sh\rhd\argument)$ and thus the $H$-action on $M\otimes\ld{M}=Lin(M,M)$ is 
$$(h\rhd f)(\argument)=h_{(1)}\rhd f(S h_{(2)}\rhd\argument)$$
In particular the action on $l_a\in A$ is\footnote{Let's note that if we identify $a\in H$ with the corresponding $l_a\in A$ then the formula for $H$-action on $A$ becomes the same as the one we had for $H$ non-quasi:
$$h\rhd a=h_{(1)}\cdot a \cdot Sh_{(2)}$$}
$$h\rhd l_a=l_{(h_{(1)}\cdot a \cdot S h_{(2)})}$$
so we see that $A$ is really a submodule.

The formula for $f\comp g$; $f,g\in M\otimes\ld{M}\cong Lin(M,M)$ is given by the following composition:

\begin{align*}
f\otimes g 
& \xmapsto{(\daco\daco)(\daco\daco)\to \daco\big(\daco(\daco\daco)\big)} \Phi^1\rhd f(S\Phi^2\rhd\argument)\otimes\Phi^3_{(1)}\rhd g(S\Phi^3_{(2)}\rhd\argument)\\
& \xmapsto{\daco\big(\daco(\daco\daco)\big)\to\daco\big((\daco\daco)\daco\big)} \Phi^1\rhd f(S\Phi^2\cdot S\phi^1\rhd\argument)\otimes\phi^2\cdot\Phi^3_{(1)}\rhd g(S\Phi^3_{(2)}\cdot S\phi^3\rhd\argument)\\
& \xmapsto{\id_M\otimes(ev\otimes\id_{\ld{M}})} \Phi^1\rhd f(S\Phi^2\cdot S\phi^1\cdot\alpha\cdot\phi^2\cdot\Phi^3_{(1)}\rhd g(S\Phi^3_{(2)}\cdot S\phi^3\rhd\argument))
\end{align*}
If we take $f=l_a,\,g=l_b\in A$ then we see that $l_a\comp l_b=l_{a\comp b}\in A$ where we define
$$a\comp b=\Phi^1\cdot a\cdot S\Phi^2\cdot S\phi^1\cdot\alpha\cdot\phi^2\cdot\Phi^3_{(1)}\cdot b\cdot S\Phi^3_{(2)}\cdot S\phi^3$$ 
\end{proof}
}

\section{Some notations in closed monoidal categories}
Let $\catH$  be a closed monoidal category with an inner hom functor $\innH$. If we take an object $P\in ob(\catH)$ we get a pair of adjoint functors $\argument\otimes P$ and $ \innH(P,\argument)$. 
We denote the unit and counit of this adjunction by $\eeta_{M,P}$ and $\eeps_{M,P}$. They can be regarded as extranatural transformations:\footnoteremember{braids}{The significance of the two braid diagrams is of course completely different. The second is pertinent only in the rigid monoidal category and we actually should not draw it here, but it gives the idea what's going on. The lines in the first braid diagram depict just the type of the extranatural transformation and don't carry any information about the actual maps.}
$$\input{\cesta/obr1}$$
and dualy $\eeta$.

We see in Apendix \ref{section_H_is_closed} that for $\catH = \LM H$, the formulas for $\eeta$ and $\eeps$ are
\begin{gather}
\label{def_eeta}
	\eeta_{M,P}(m)(\argument)=(\phi^1\rhd m)\otimes(\phi^2\cdot\beta\cdot S\phi^3\rhd\argument) 
\\
\label{def_eeps}
	\eeps_{M,P}(f\otimes p)=\Phi^1\rhd f(S\Phi^2\cdot\alpha\cdot\Phi^3\rhd p)\;.
\end{gather}

\section{Algebra $A$}\label{section_A}
Let $\catH$ be a closed monoidal category with inner hom functor $\innH$. We denote
$$A:=\int_{X\in\catH}\innH(X,X)\,.$$
We assume in the rest of the article that the above end exists. We will see in Section \ref{section_nonsense} that it indeed does exist for $\catH = \LM H$.
\begin{lem}\noindent 
\begin{enumerate}\label{lemA}
\item $A$ is an augmented associative algebra in $\catH$.
\item Any object $M\in\catH$ is in a canonical way a left module over $A$. We denote the canonical action by $\sipka$.
\item \label{lemA3} With respect to the canonical action, every morphism in $\catH$ is $A$-linear.
\end{enumerate}
\end{lem}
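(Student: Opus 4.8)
The plan is to recognize $A = \int_{X} \innH(X,X)$ as the object governing the identity functor, exploiting the universal (wedge/end) property rather than any explicit formula. First I would unpack the defining data: the end $A$ comes with a projection $\pi_X \colon A \to \innH(X,X)$ for each $X$, natural in the dinatural sense, and $A$ is terminal among such wedges. Dualizing through the tensor–hom adjunction, $\pi_X$ corresponds to a morphism $\sipka_X \colon A \otimes X \to X$; the dinaturality of $(\pi_X)$ translates precisely into the naturality square saying that for every $f \colon M \to N$ in $\catH$ one has $f \circ \sipka_M = \sipka_N \circ (\id_A \otimes f)$. That is exactly statement \eqref{lemA3}, so item (3) will essentially be a repackaging of the end condition once the correspondence between $\pi_X$ and $\sipka_X$ is set up carefully.

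For item (1), I would build the multiplication $m \colon A \otimes A \to A$ using the universal property of $A$ as a target: it suffices to produce a wedge from $A \otimes A$ to $\innH(X,X)$, i.e. a dinatural family $A \otimes A \to \innH(X,X)$. The natural candidate is the "composition" wedge: under adjunction, $A \otimes A \otimes X \xrightarrow{\id \otimes \sipka_X} A \otimes X \xrightarrow{\sipka_X} X$. One checks this family is dinatural in $X$ (it is, because each $\sipka_X$ is, and composition of natural things is natural), hence it factors uniquely through $\pi_X$, giving $m$. Associativity and unitality (the unit $\eta \colon \mathbbm 1 \to A$ coming from the wedge $\mathbbm 1 \to \innH(X,X)$ adjoint to the identity $X \to X$) then follow from uniqueness in the universal property: both sides of each axiom are maps into the end inducing the same wedge, so they coincide. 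The augmentation is the projection $\pi_{\mathbbm 1} \colon A \to \innH(\mathbbm 1, \mathbbm 1) \cong \mathbbm 1$, or equivalently $\sipka_{\mathbbm 1}$; that it is an algebra map again follows by comparing wedges.

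For item (2), the action $\sipka \colon A \otimes M \to M$ is the $\sipka_M$ already produced; I must verify it is an associative, unital action of the algebra $(A, m, \eta)$. Both the associativity pentagon $\sipka_M \circ (m \otimes \id_M) = \sipka_M \circ (\id_A \otimes \sipka_M)$ (suitably inserting the associator of $\catH$) and the unit law reduce, after transposing across the adjunction or directly, to the defining equations of $m$ and $\eta$ as factorizations through the end — so once more the argument is "apply the universal property, compare induced wedges." The one genuine subtlety, and the step I expect to absorb most of the care, is the bookkeeping of associators: $\catH$ is only monoidal, not strict, so the dinaturality condition for $A = \int_X \innH(X,X)$ and all the adjunction transposes carry associativity constraints that must be tracked — this is where the explicit Drinfeld formulas \eqref{def_eeta}, \eqref{def_eeps} for $\eeta, \eeps$ in $\LM H$ would be invoked if a hands-on verification is preferred over the abstract coherence argument. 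Everything else is formal nonsense driven by the universal property of the end.
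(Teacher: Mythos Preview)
Your proposal is correct and follows essentially the same strategy as the paper: both build the multiplication on $A$ via the universal property of the end and obtain item~(3) by recognizing that $\sipka_X$ is a natural transformation. The only cosmetic difference is that the paper works on the $\innH$ side, using the inner composition $\comp:\innH(Y,Z)\otimes\innH(X,Y)\to\innH(X,Z)$ precomposed with the end projections $\alpha_X\otimes\alpha_Y$ to produce the wedge $A\otimes A\to\innH(X,X)$, whereas you first transpose to the action $\sipka_X$ and build the same wedge as the adjoint of $\sipka_X\circ(\id_A\otimes\sipka_X)$; these are the two adjoint faces of one argument.
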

\begin{proof}
The composition $\comp:\innH(Y,Z)\otimes\innH(X,Y)\to\innH(X,Z)$ is an extranatural transformation with the braid diagram:\footnoterecall{braids}
$$\input{\cesta/obr2}$$
Now $A$ as an end comes equipped with an extranatural $\alpha_X:A\to\innH(X,X)$:
$$\begin{tikzpicture}[baseline=0.0cm]
\node at (-2.5,-1.2) {$A$};
\node at (-2.5,0.0) {$\innH(X,X)$};
\draw [->] (-2.5,-1.0) ..controls +(0.0,0.33) and +(0.0,-0.33) .. (-2.5,-0.2);
\node at (-3.0,-0.6) {$\alpha_X$};
\node at (0.0,0.0) {$\catH$};
\node at (0.7,0.0) {$\catH^{op}$};
\draw  (0.0,-0.2) ..controls +(0.0,-0.67) and +(0.0,-0.67) .. (0.7,-0.2);
\end{tikzpicture}
$$
Tensoring it with itself, we get an extranatural $\alpha_X\otimes\alpha_Y:A\otimes A\to \innH(X,X)\otimes\innH(Y,Y)$:
$$\input{\cesta/obr4}$$
We compose the last with $\comp$ to get an extranatural
$$\input{\cesta/obr5}$$
This is actually an extranatural transformation to $\innH(X,X)$ of type \begin{tikzpicture}[baseline=-0.5cm]
\node at (0.0,0.0) {{\scriptsize$\catH$}};
\node at (0.4,0.0) {{\scriptsize$\quad\catH^{op}$}};
\draw  (0.0,-0.2) ..controls +(0.0,-0.33) and +(0.0,-0.33) .. (0.4,-0.2);
\end{tikzpicture}
 and by the universal property of an end there exists a unique $A\otimes A\xto{\comp} A$ such that
$$\input{\cesta/obr7}$$
commutes for all $X\in ob(\catH)$.
So $\alpha_X:A\to\catH(X,X)$ is by definition an algebra homomorphism what gives us the action $\sipka$ of $A$ on $X$.\footnote{Recall that the action of $\catH(X,X)$ on $X$ is given by $\eeps_{X,X}:\innH(X,X)\otimes X\to X$.} Looking at this definition of $\sipka$ we see that $\sipka_X:A\otimes X\to X$ is a composition of two extranatural transformations:
$$\input{\cesta/obr8}$$
and thus a natural transformation. But the fact that it is a natural transformation actually means that all $\catH$-morphisms are $A$-linear.
\end{proof}
\begin{notation}
$A$ has an augmentation $\eps_A: A \to I$ defined to be equal to $\sipka_I: A\tp I \to I$. On pictures, we denote it by \begin{tikzpicture}[baseline=0.0cm]
\filldraw [line width=1pt] (0.0,0.0) circle (0.08cm);
\node at (0.0,-0.6) {{\scriptsize $A$}};
\draw [line width=1pt] (0.0,-0.4) ..controls +(0.0,0.17) and +(0.0,-0.17) .. (0.0,-0.08);
\end{tikzpicture}
 .
\end{notation}
\begin{lem}
If $\catH$ is rigid or if it is the category of representations of a quasi-Hopf algebra then the map $\catH(N,M\otimes A)\to Nat(N\otimes\argument\,, M\otimes\argument)$ given by
$$\input{\cesta/obr10}\mapsto\input{\cesta/obr11}$$
is an isomorphism.
\end{lem}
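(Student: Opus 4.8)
The plan is to exhibit an explicit inverse to the map $\Theta\colon \catH(N,M\otimes A)\to \mathrm{Nat}(N\otimes\argument\,,\,M\otimes\argument)$, using the defining (co)end/adjunction data for $A$. First I would unwind what $\Theta$ does: given $g\colon N\to M\otimes A$, the associated natural transformation at an object $X$ is obtained by tensoring with $X$, then applying $\id_M\otimes\sipka_X\colon M\otimes A\otimes X\to M\otimes X$ (i.e. the canonical $A$-action from Lemma~\ref{lemA}(2)). Since by Lemma~\ref{lemA}(3) every $\catH$-morphism is $A$-linear and $\sipka$ is built from the end structure maps $\alpha_X$ together with $\eeps_{X,X}$, this assignment really does land in natural transformations; that direction is essentially the content of the previous lemma and requires no new work.

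For the inverse, the key step is to recover $g$ from a natural transformation $\tau\colon N\otimes\argument\Rightarrow M\otimes\argument$ by evaluating at the ``universal'' object and using the universal property of the end $A=\int_{X}\innH(X,X)$. Concretely, in the rigid case I would feed $\tau$ the components $\tau_X$ and cook up, for each $X$, a morphism $N\to M\otimes\innH(X,X)$ using the unit $\eeta_{X,X}\colon \catH \to \innH(X,X)\otimes X$ (equivalently, the coevaluation when $\catH$ is rigid): namely $N\xrightarrow{\id\otimes\eeta}N\otimes X\otimes \rd X \xrightarrow{\tau_X\otimes\id} M\otimes X\otimes \rd X\cong M\otimes \innH(X,X)$. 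Naturality of $\tau$ in $X$ is exactly what makes this family extranatural in $X$ into $M\otimes\innH(X,X)$, so the universal property of the end yields a unique $g\colon N\to M\otimes A$. One then has to check $\Theta(g)=\tau$ and that $g$ is the unique such preimage — both follow by composing with the structure maps $\alpha_X$ and the triangle identities for the adjunction $(\argument\otimes X,\innH(X,X))$, i.e. the interplay of $\eeta_{X,X}$ and $\eeps_{X,X}$ recorded in the braid diagrams \dessin{obr10}, \dessin{obr11}.

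For the quasi-Hopf case $\catH=\LM H$ (which is closed but not rigid, by Appendix~\ref{section_H_is_closed}), the same diagram chase goes through once $\rd X$ is replaced throughout by $\innH(X,I)$ and coevaluation by $\eeta_{X,X}$; alternatively one reduces to the rigid case by restricting to finite-dimensional $X$, noting that the end $\int_X \innH(X,X)$ is computed over all of $\catH$ but — because every $X$ is a filtered colimit of its finite-dimensional submodules and $\innH(\argument,\argument)$ sends these to a limit — it is already determined by the full subcategory $\catHfd$ of finite-dimensional modules, where Proposition~\ref{prop_strict_category} applies. I would present the rigid argument in detail and then indicate this reduction.

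The main obstacle I expect is bookkeeping rather than conceptual: keeping the extranaturality/naturality variables straight and verifying that the candidate inverse is genuinely two-sided, i.e. that both composites are the identity. In particular, the verification $\Theta(g)=\tau$ uses one triangle identity while uniqueness of $g$ (hence the other composite being the identity) uses the other, and in the non-rigid $\LM H$ setting one must be careful that the explicit formulas \eqref{def_eeta}--\eqref{def_eeps} for $\eeta$ and $\eeps$ — with their $\Phi$, $\phi$, $\alpha$, $\beta$ insertions — are only used through the abstract triangle identities, so that no associator cocycle computation actually intrudes.
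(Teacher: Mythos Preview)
Your rigid-case argument is workable but contains a step you have not justified: from the wedge $N\to M\otimes\innH(X,X)$ you invoke ``the universal property of the end'' to obtain $g\colon N\to M\otimes A$. The universal property of $A=\int_X\innH(X,X)$ produces maps into $A$, not into $M\otimes A$; what you are really using is that $M\otimes A\cong\int_X M\otimes\innH(X,X)$, i.e.\ that $M\otimes(-)$ preserves this end. In a rigid category this is true (because $M\otimes(-)$ is right adjoint to $\rd M\otimes(-)$), so the gap is fillable, but you should say so. Also, with the paper's conventions $\innH(X,X)=X\otimes\ld X$ and the relevant coevaluation is $\ld{coev}\colon I\to X\otimes\ld X$, not the right dual.

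The paper's rigid proof (given for the more general Lemma~\ref{lem_vlocka}) takes a different and slicker route: instead of dualizing the \emph{variable} $X$, it dualizes the fixed object $M$ on the target side, obtaining $\catH(N\otimes T,\,M\otimes T)\cong\catH(\ld M\otimes N,\,\innH(T,T))$; naturality in $T$ then reduces to the trivial ``no $M$'' case, which holds in any closed category by the remark preceding that lemma. This avoids the end-preservation issue altogether.

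For $\catH=\LM H$ your proposal has a genuine gap. Replacing $\rd X$ by $\innH(X,I)$ and coevaluation by $\eeta$ does not reproduce the duality trick: the unit $\eeta_{M,P}\colon M\to\innH(P,M\otimes P)$ is not a coevaluation-type map $I\to X\otimes(\text{something})$, so the same diagram chase does not go through. Your alternative reduction to $\catHfd$ also does not close the gap, because even if the end and the natural transformations can be computed on finite-dimensional test objects, the step ``$M\otimes(-)$ preserves the end'' still requires $M$ itself to be dualizable, which it need not be. This end-preservation is exactly the content the paper isolates in the Claim following Lemma~\ref{lem_vlocka} and then proves for $\LM H$ by the explicit formula in Corollary~\ref{cor_skjfss}; some concrete computation with $\Phi,\phi$ is unavoidable here, contrary to your hope that ``no associator cocycle computation actually intrudes.''
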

\begin{proof}
We will prove a more general Lemma \ref{lem_vlocka} later.
\end{proof}
In the rest of the article we assume that the conclusion of the lemma holds and thus we have the above bijection between $Nat(N\otimes\argument\,, M\otimes\argument)$ and $\catH(N,M\otimes A)$. Using this assumption we can for any $M\in \catH$ define a ``braiding between A and M'' i.e. an $\catH$-morphism 
$$\beta_{A,M}: A\otimes M\to M\otimes A$$
by requiring for any $T\in \catH$
$$\input{\cesta/obr12}=\input{\cesta/obr13}$$
\begin{obs}\label{obs_sipka_a_epsA} $ \input{\cesta1/7daco7} = \input{\cesta1/7daco6}$
\end{obs}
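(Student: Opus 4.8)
The plan is to deduce the identity from the universal property that \emph{defines} $\beta_{A,M}$, simply by evaluating it at the monoidal unit $T=I$.

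Recall how the bijection of the preceding lemma works: a morphism $\phi\colon N\to M\tp A$ corresponds to the natural transformation whose component at $T$ is $(\id_M\tp\sipka_T)\circ(\phi\tp\id_T)\colon N\tp T\to M\tp T$. By construction $\beta_{A,M}\colon A\tp M\to M\tp A$ is the unique morphism whose associated natural transformation is $\sipka_{M\tp(\argument)}$; unwinding the pictures $\dessin{obr12}=\dessin{obr13}$, this says exactly that for every $T\in\catH$
\begin{equation}\label{eq_obs_defbeta}
(\id_M\tp\sipka_T)\circ(\beta_{A,M}\tp\id_T)=\sipka_{M\tp T}
\end{equation}
(with the associativity constraint suppressed).

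Now I would specialise \eqref{eq_obs_defbeta} to $T=I$. On the left-hand side the $I$–strand is invisible and $\sipka_I$ is, by the Notation above, the augmentation $\eps_A$, so the left-hand side becomes $(\id_M\tp\eps_A)\circ\beta_{A,M}\colon A\tp M\to M$. On the right-hand side $\sipka_{M\tp I}$ is carried to $\sipka_M$ by the right unitor $M\tp I\cong M$, because that unitor is a morphism of $\catH$ and hence $A$-linear by Lemma \ref{lemA} (part \ref{lemA3}). Reading the resulting equality back into string diagrams gives precisely $\dessinn{7daco7}=\dessinn{7daco6}$, as claimed.

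So the whole argument is one line once \eqref{eq_obs_defbeta} is on the table. The only points needing (entirely routine) care are the bookkeeping of the unit and associativity coherences when one plugs in $T=I$, and the remark that $\sipka_{M\tp I}$ and $\sipka_M$ coincide — which is nothing but $A$-linearity of the unitor, i.e. Lemma \ref{lemA}(\ref{lemA3}). I do not expect any genuine obstacle here.
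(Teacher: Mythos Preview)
Your argument is correct and is precisely the one-line derivation the paper has in mind: the Observation is stated without proof because it is exactly the defining equation $\dessin{obr12}=\dessin{obr13}$ for $\beta_{A,M}$ evaluated at $T=I$, using $\sipka_I=\eps_A$ and the $A$-linearity of the unitor from Lemma~\ref{lemA}(\ref{lemA3}). There is nothing to add.
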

\begin{lem}  \label{lem_A_is_in_Double}
Using the above maps $A$ becomes a commutative algebra in the double $\catD$ of our $\catH$.
\end{lem}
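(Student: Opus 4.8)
The plan is to verify the three pieces of structure in turn: that $\beta_{A,M}$ is a half-braiding (i.e. satisfies the hexagon/compatibility axioms making $(A,\beta_{A,-})$ an object of $\catD$), that the algebra multiplication $\comp\colon A\tp A\to A$ and unit $I\to A$ from Lemma \ref{lemA} are morphisms in $\catD$, and finally that the resulting algebra is commutative in $\catD$, i.e. $\comp\circ\beta_{A,A}=\comp$. Throughout I would work with the defining property of $\beta_{A,M}$ via the bijection $\catH(N,M\tp A)\cong Nat(N\tp\argument,M\tp\argument)$: an equality of morphisms into $M\tp A$ can be tested by whisker­ing with an arbitrary $T\in\catH$ and checking the induced natural transformations agree. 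The key auxiliary fact is that the canonical action $\sipka_M\colon A\tp M\to M$ is itself natural in $M$ (shown in Lemma \ref{lemA}\eqref{lemA3}), so the natural transformation corresponding to $\beta_{A,M}$ is built from $\sipka$, and Observation \ref{obs_sipka_a_epsA} lets me collapse the unit/counit bookkeeping.

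First, for the half-braiding axioms: naturality of $\beta_{A,-}$ in $M$ is immediate because both sides of the defining equation are natural in $M$ (the right-hand side is literally $\sipka$ whiskered appropriately, and $\sipka$ is natural). For the hexagon $\beta_{A,M\tp N}=(\id_M\tp\beta_{A,N})\circ(\beta_{A,M}\tp\id_N)$ (up to associators), I would apply the bijection: both sides, post-composed with an arbitrary test object, correspond to the natural transformation $N\tp T\mapsto$ ``act by $A$ on the $M\tp N$-factor'', and the fact that the canonical $A$-action on a tensor product $M\tp N$ is the ``diagonal'' one follows because $\alpha_{M\tp N}$ factors through $\alpha_M$ and $\alpha_N$ via $\comp$ and the monoidal structure of $\innH$ — this is exactly the commuting square displayed with \dessin{obr7}. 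The unit axiom $\beta_{A,I}=\id_A$ is Observation \ref{obs_sipka_a_epsA} together with $\sipka_I=\eps_A$.

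Second, that $\comp$ and the unit are $\catD$-morphisms: I would again reduce to a statement about natural transformations. The multiplication $\comp$ corresponds under the bijection to composition of the associated natural transformations, and $\beta_{A,M}$ is compatible with this precisely because $\alpha_X\colon A\to\innH(X,X)$ is an algebra homomorphism (Lemma \ref{lemA}), so $\sipka$ is an action; the required square ``$\beta_{A,M}$ intertwines $\comp\tp\id$'' translates into the associativity/action identity $\sipka_M\circ(\comp\tp\id)=\sipka_M\circ(\id\tp\sipka_M)$ up to associator, which holds by definition of an algebra action. The unit being a $\catD$-morphism is the statement $\beta_{A,M}\circ(\eta\tp\id_M)=(\id_M\tp\eta)$, which is unitality of $\sipka$ combined with Observation \ref{obs_sipka_a_epsA}. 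For commutativity, $\comp\circ\beta_{A,A}$ and $\comp$ both lie in $\catH(A\tp A,A)$; I would test them against the universal cone maps $\alpha_X$, i.e. show $\alpha_X\circ\comp\circ\beta_{A,A}=\alpha_X\circ\comp$ for all $X$. The left side unwinds, using the defining property of $\beta$ and $\alpha_X\circ\comp=\comp\circ(\alpha_X\tp\alpha_X)$, to ``act with the second copy of $A$, then the first, on $X$'', and the right side to ``act with the first, then the second''; these agree because two elements of $A$ act on $X$ through the single algebra $\innH(X,X)$, but that is not yet symmetric — the actual point is that $\beta_{A,A}$ was defined so that the $A$-action $\sipka$ on the object $M=A$ (regarded via $\alpha_X$) is reproduced, and composing with $\comp$ identifies both orders with the single natural endomorphism $\sipka_X\colon A\tp X\to X$ associated to the end structure. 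The main obstacle, and where I would spend the most care, is exactly this last step: pinning down that $\beta_{A,A}$ "swaps" the two tensor factors of $A$ in a way compatible with $\comp$ requires unwinding the definition of $\beta$ (which is given only implicitly, through whiskering with test objects $T$) and matching it against the end's universal property; the diagrammatic identities \dessin{obr12}, \dessin{obr13} and Observation \ref{obs_sipka_a_epsA} are the tools, but one must be scrupulous about associators since $\catH$ is not assumed strict. I would organize this by first proving the clean identity $\sipka_X\circ(\id_A\tp\sipka_X) = \sipka_X\circ(\comp\tp\id_X)$ and its "reversed" companion obtained via $\beta$, then concluding commutativity by the faithfulness of the family $\{\alpha_X\}$ coming from $A$ being an end.
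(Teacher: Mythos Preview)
Your overall plan is the paper's plan: check naturality of $\beta_{A,-}$, the hexagon, that $\comp$ is a $\catD$-morphism, and commutativity, each by invoking the bijection $\catH(N,M\otimes A)\cong Nat(N\otimes\argument,M\otimes\argument)$ and letting $A$ act on a test object $T$ via $\sipka$. Two places in your write-up need repair.

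For the hexagon, the claim that ``$\alpha_{M\otimes N}$ factors through $\alpha_M$ and $\alpha_N$ via $\comp$'' and that $A$ acts diagonally on $M\otimes N$ is not available in this generality: there is no comultiplication on $A$, and \dessin{obr7} does not give such a factorisation. The paper's argument is more direct and needs nothing of the sort. Testing $\beta_{A,M\otimes N}$ against $T$ yields $\sipka_{(M\otimes N)\otimes T}$ straight from the definition of $\beta$. Testing the composite $(\id_M\otimes\beta_{A,N})\circ(\beta_{A,M}\otimes\id_N)$ against $T$ and unfolding the definition of $\beta$ twice yields $\sipka_{M\otimes(N\otimes T)}$. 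The two agree because $\sipka$ is natural (equivalently, the associator is an $\catH$-morphism and hence $A$-linear by Lemma~\ref{lemA}\eqref{lemA3}).

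For commutativity, your proposal to test against the cone maps $\alpha_X$ is a detour, and you correctly sense the obstacle: $\beta_{A,A}$ is defined through test objects, not through the $\alpha_X$, so relating the two requires extra work. The paper avoids this by staying within the same test-object framework used elsewhere. The step you are missing is that $\sipka_T:A\otimes T\to T$ is itself a morphism in $\catH$ and hence $A$-linear (Lemma~\ref{lemA}\eqref{lemA3}); this is what lets the two copies of $A$ trade places. Concretely: $\sipka_T\circ(\comp\otimes\id_T)$ equals $\sipka_T\circ(\id_A\otimes\sipka_T)$ by the action property; $A$-linearity of $\sipka_T$ rewrites this as $\sipka_T\circ\sipka_{A\otimes T}$; the definition of $\beta_{A,A}$ (with test object $T$) rewrites $\sipka_{A\otimes T}$ as $(\id_A\otimes\sipka_T)\circ(\beta_{A,A}\otimes\id_T)$; one more use of the action property collapses this to $\sipka_T\circ\big((\comp\circ\beta_{A,A})\otimes\id_T\big)$. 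No appeal to the end universal property is needed beyond what is already packaged in the bijection.
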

\begin{proof}
For $A$ to be an object of $\catD$ we need $\beta_{A,M}$ to be natural in $M$ and to satisfy the hexagon.
Naturality in $M$:\footnote{The second equality is from $A$-linearity of all $\catH$-morphisms. The rest is the definition of $\beta$.}
$$\input{\cesta/obr14}=\input{\cesta/obr15}=\input{\cesta/obr16}=\input{\cesta/obr17}$$
The hexagon:\footnote{Here we just use three times the definition of $\beta$.}
$$\input{\cesta/obr18}=\input{\cesta/obr19}=\input{\cesta/obr20}=\input{\cesta/obr21}$$

For $A$ to be an algebra in $\catD$ we need the compatibility between the braiding and the multiplication.
Doing the multiplication first and the the braiding corresponds to\footnote{The equality is just the definition of $\beta$.}
$$\input{\cesta/obr22}=\input{\cesta/obr23}$$
Doing the braiding first and then multiplying gives:\footnote{The first equality comes from the fact that $\sipka$ is an action and the second is the definition of $\beta$ applied twice.}
$$\input{\cesta/obr24}=\input{\cesta/obr25}=\input{\cesta/obr26}$$
So we see that the two results are the same because $\sipka$ is an action. 

The last thing is the commutativity of $A$:\footnote{The first and the last equalities hold because $\sipka$ is an action, the second is the definition of $\beta$ and the third is from $A$-linearity of any $\catH$-morphism.}
$$\input{\cesta/obr27}=\input{\cesta/obr28}=\input{\cesta/obr29}=\input{\cesta/obr30}=\input{\cesta/obr31}$$
\end{proof}

\forme{
\section{category $\catA:= \RM A (\catD)$}
\begin{enumerate}
\item prove that $\catD$ satisfies what it should if $\catH$ does.
\item the forgetful functor $\catD \to \catH$ preserves limits and thus it does not matter whether we do $\otimes_A$ in $\ABimodH$ or in $\catA$.
\end{enumerate}
}


\section{Abstract Nonsense}\label{section_nonsense}
\begin{defn}
Let $\catH$, $\catA$ be any categories and $F:\catH^{op}\times\catH\to\catA$ be a functor. We will say that an object $C\in\catH$ generates $F$ if for any $D\in ob(\catA)$, $X,Y\in ob(\catH)$ and two different maps $\delta_1,\delta_2:D\to F(X,Y)$ there exist $f\in\catH(C,X)$ s.t. the two compositions $D\xto{\delta_i}F(X,Y)\xto{F(f,\id_Y)} F(C,Y)$ are different. 
\end{defn}
\begin{rem}
If $F=Hom_\catH:\catH^{op}\otimes\catH\to \Set$ then ``$C$ generates $F$'' just means that $C$ is a generator of the category $\catH$.
\end{rem}
The only reason for writing the above definition is the following
\begin{prop}
Let $\catH$, $\catA$ be categories and $F:\catH^{op}\times\catH\to\catA$ a bifunctor generated by $C$. Let $D\in \catA$ be an object and $\delta:D\to F$ an extra-natural transformation. If $(D,\delta)$ is an end of $F|_C$ then it is an end of $F$.
\end{prop}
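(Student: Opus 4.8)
The plan is to verify the universal property of the end $\int_{X\in\catH}F(X,X)$ directly, using the generation hypothesis only at one point — to propagate an equality of maps from the single object $C$ to all of $\catH$. Write $\delta_X:D\to F(X,X)$ for the components of the given extranatural $\delta$, so that (by hypothesis) the pair $(D,\delta_C)$ is an end of the restriction $F|_C$ of $F$ to $C$; recall that a wedge for $F|_C$ from an object $E$ is just a map $E\to F(C,C)$ satisfying the wedge condition for the morphisms $g\colon C\to C$. Given an arbitrary extranatural $\epsilon\colon E\to F$, i.e.\ a wedge over all of $\catH$, I must produce a unique $u\colon E\to D$ with $\delta_X\circ u=\epsilon_X$ for every $X\in\ob(\catH)$.

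For existence and uniqueness of the \emph{candidate}: restricting $\epsilon$ to $C$ gives a wedge $\epsilon_C\colon E\to F(C,C)$ for $F|_C$ (its wedge condition is the special case of the extranaturality of $\epsilon$ for morphisms $g\colon C\to C$). Since $(D,\delta_C)$ is an end of $F|_C$, there is a unique $u\colon E\to D$ with $\delta_C\circ u=\epsilon_C$, and this same clause gives uniqueness of $u$ among \emph{all} maps satisfying $\delta_X\circ u=\epsilon_X$ for every $X$, because any such map in particular satisfies $\delta_C\circ u=\epsilon_C$.

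It remains to check $\delta_X\circ u=\epsilon_X$ for an arbitrary $X$. Here I would use the contrapositive of ``$C$ generates $F$'', applied with ambient object $E$ and with $Y=X$: it suffices to show $F(f,\id_X)\circ(\delta_X\circ u)=F(f,\id_X)\circ\epsilon_X$ for every $f\in\catH(C,X)$. Extranaturality of $\delta$ along $f$ gives $F(f,\id_X)\circ\delta_X=F(\id_C,f)\circ\delta_C$, hence $F(f,\id_X)\circ\delta_X\circ u=F(\id_C,f)\circ\delta_C\circ u=F(\id_C,f)\circ\epsilon_C$; extranaturality of $\epsilon$ along $f$ gives $F(f,\id_X)\circ\epsilon_X=F(\id_C,f)\circ\epsilon_C$. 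The two right-hand sides coincide, so the generation property forces $\delta_X\circ u=\epsilon_X$. Since $(D,\delta)$ is itself a wedge over $\catH$ by hypothesis, this exhibits it as an end of $F$.

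The only non-formal step is the last one, and it is not so much an obstacle as the raison d'être of the definition: ``$C$ generates $F$'' is tailored precisely so that equality of two maps into $F(X,X)$ can be tested after postcomposition with the $F(f,\id_X)$ for $f\in\catH(C,X)$, and the two extranaturality squares (for $\delta$ and for $\epsilon$) reduce each such test to the identity $\delta_C\circ u=\epsilon_C$ we already have. Everything else is bookkeeping with wedges; the only thing to be careful about is keeping the variances straight in the extranaturality squares.
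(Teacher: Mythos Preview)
Your proof is correct and follows essentially the same route as the paper's: obtain the unique factoring map from the end property of $F|_C$, then verify the remaining component equalities $\delta_X\circ u=\epsilon_X$ by postcomposing with $F(f,\id_X)$ for all $f\colon C\to X$, using the two extranaturality squares to reduce to $\delta_C\circ u=\epsilon_C$, and invoking the generation hypothesis. The paper presents the same argument via commutative diagrams, while you spell out the extranaturality identities explicitly.
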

\begin{proof}
Let $D'\in \catA$ and $\delta':D'\to F$ be another extranatural transformation. We want to show that $\delta'$ factorizes through $\delta$. Well, since $D\xto{\delta_C}F(C,C)$ is the end of $F|_C$, we have a unique $h\in\catA(D',D)$ s.t.
$$\begin{tikzpicture}
	\node(vzdx) at (2, 0){};
	\node(vzdy) at (0, 1.5){};
	
	\node(a) {$D'$};
	\node(b) at ($(a)+(vzdy)$) {$D$};
	\node(c) at ($(b)+(vzdx)$) {$F(C,C)$};
	
	\draw[->] (a)-- node[left] {$\exists ! h$} (b);
	\draw[->] (a)-- node[below] {$\delta'_C$} (c);
	\draw[->] (b)-- node[above] {$\delta_C$} (c);	
\end{tikzpicture}
$$
We just have to show that $\forall X\in\catH$ we also have commutativity of
$$\begin{tikzpicture}
	\node(vzdx) at (2, 0){};
	\node(vzdy) at (0, 1.5){};
	
	\node(a) {$D'$};
	\node(b) at ($(a)+(vzdy)$) {$D$};
	\node(c) at ($(b)+(vzdx)$) {$F(X,X)$};
	
	\draw[->] (a)-- node[left] {$h$} (b);
	\draw[->] (a)-- node[below] {$\delta'_X$} (c);
	\draw[->] (b)-- node[above] {$\delta_X$} (c);	
\end{tikzpicture}
$$
Now for any $C\xto{f} X$ in $\catH$ we have a diagram
$$\input{\cesta/obr34}$$
where everything commutes except possibly our desired triangle. From that we get that our triangle commutes after it is composed with any $F(f,1)$:
$$\input{\cesta/obr35}$$
Now we just apply the fact that $C$ generates $F$.
\end{proof}
We have the following corollary:
\begin{lem}\label{lem_explicit_srdiecko}
Let again $H$ be an quasi-Hopf algebra, $\catH:=\text{$H$-Mod}$ and let $C\in ob(\catH)$ be the left regular module of $H$. Then $C$ generates $\innH$ and thus
\begin{align*}
\int_{X\in\catH}\innH(X,X)&=\bigcap_{f\in\catH(C,C)}Ker\big(\innH(C,C)\xto{f^\ast-f_\ast}\innH(C,C)\big)=\\
&=\big\{l_a\in Lin(H,H)\,|\;a\in H \big\}
\end{align*}
where $l_a$ denotes the left multiplication $l_a:h\mapsto a\cdot h:H\to H$.

More generally for any $M,N\in ob(\catH)$ our $C$ generates the bifunctor 
$$X,Y\mapsto \innH(X,M\otimes Y\otimes N)$$ and thus
\begin{align*}
&\int_{X\in\catH}\innH(X,M\otimes X\otimes N) =\\
&\quad\quad\quad=\bigcap_{f\in\catH(C,C)}Ker\big(\innH(C,M\otimes C\otimes N)\xto{f^\ast-f_\ast}\innH(C,M\otimes C\otimes N)\big)=\\
&\quad\quad\quad=\big\{l_{\sum m_i\otimes a_i\otimes n_i}\in Lin(H,M\otimes H\otimes N)\,|\;\sum m_i\otimes a_i\otimes n_i\in M\otimes H\otimes N \big\}
\end{align*}
where $l_{\sum m_i\otimes a_i\otimes n_i}:H\to M\otimes H\otimes N: h\mapsto \sum m_i\otimes (a_i\cdot h)\otimes n_i$
\end{lem}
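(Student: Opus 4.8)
The plan is to apply the preceding Proposition to the bifunctor $F(X,Y):=\innH(X,M\tp Y\tp N)$, taking for $C$ the left regular module of $H$; the assertion about $\innH$ itself is the special case $M=N=I$. The one point that needs an argument is that $C$ generates $F$, and here I would use the two elementary properties of $C=H$: every morphism $f\in\catH(C,X)$ has the form $l_x\colon h\mapsto h\rhd x$ with $x:=f(1)\in X$, and conversely every $x\in X$ arises this way. So suppose $\delta_1\neq\delta_2$ are morphisms $D\to F(X,Y)$; pick $d\in D$ with $\delta_1(d)\neq\delta_2(d)$ in $\innH(X,M\tp Y\tp N)=Lin(X,M\tp Y\tp N)$, and then $x\in X$ with $\delta_1(d)(x)\neq\delta_2(d)(x)$. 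Since $F(l_x,\id_Y)$ is precomposition with $l_x$, the maps $F(l_x,\id_Y)\circ\delta_i\colon D\to F(C,Y)$ send $d$ to linear maps whose value at $1\in H$ is $\delta_i(d)(l_x(1))=\delta_i(d)(x)$; these disagree, so $F(l_x,\id_Y)\circ\delta_1\neq F(l_x,\id_Y)\circ\delta_2$. This proves that $C$ generates $F$.

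By the Proposition it then suffices to compute the end of $F$ restricted to the full subcategory on the single object $C$. For a one-object indexing category this end is, by definition, the equalizer in $\catH$ of the pair $\innH(C,M\tp C\tp N)\rightrightarrows\prod_{f\in\catH(C,C)}\innH(C,M\tp C\tp N)$ whose $f$-th components are $F(C,f)$ (postcomposition with $\id_M\tp f\tp\id_N$) and $F(f,C)$ (precomposition with $f$). This equalizer exists since $H$-Mod is complete and $\catH(C,C)$ is a set, and — because $F(C,f)$ and $F(f,C)$, being values of the bifunctor $\innH$, are $\catH$-morphisms — it is precisely the $H$-submodule $\bigcap_{f\in\catH(C,C)}Ker(f^\ast-f_\ast)$ in the statement.

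To identify this intersection explicitly, write $\innH(C,M\tp C\tp N)=Lin(H,M\tp H\tp N)$ and recall that every $f\in\catH(C,C)$ is right multiplication $r_c\colon h\mapsto hc$ by $c:=f(1)$. Then $g\in Lin(H,M\tp H\tp N)$ lies in the end iff $g(hc)=(\id_M\tp r_c\tp\id_N)(g(h))$ for all $h,c\in H$. Setting $h=1$ forces $g=l_\xi$ for $\xi:=g(1)\in M\tp H\tp N$, where $l_\xi\colon h\mapsto\sum_i m_i\tp(a_i h)\tp n_i$ if $\xi=\sum_i m_i\tp a_i\tp n_i$; conversely associativity of the product of $H$ shows that every such $l_\xi$ does satisfy the identity. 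Hence the end is $\{\,l_\xi\mid\xi\in M\tp H\tp N\,\}$, which for $M=N=I$ is $\{\,l_a\mid a\in H\,\}\cong H=A$.

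I do not expect a genuine obstacle here: the Proposition already does the work of cutting the end over all of $\catH$ down to the end over $C$, and what remains is the equalizer formula together with the "evaluate at $1$" computation. The only point to keep straight is that endomorphisms of the left regular module act by \emph{right} multiplication, which is exactly why the end is realized by \emph{left} multiplication operators $l_\xi$.
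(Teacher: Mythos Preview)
Your proposal is correct and follows precisely the route the paper intends: the lemma is stated in the paper as a corollary of the preceding Proposition with no further proof, and you have supplied exactly the details that the word ``corollary'' elides --- namely, verifying that the regular module $C$ generates the bifunctor (via the evaluation-at-$1$ trick), reducing to the equalizer over $\catH(C,C)$, and identifying that equalizer with left multiplications using that $\catH(C,C)$ consists of right multiplications. There is nothing to add.
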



\section{Functor $\srdiecko:\catH \to \catA$}\label{section_srdiecko}
Generalizing the construction of $A$ define the functor $\srdiecko:\catH\to\catH$:
\begin{equation}\label{equa_srdiecko}
\srdiecko M=\int_{X\in \catH}\innH(X,X\otimes M)\;.
\end{equation}
We will further assume that the above limit exists. We know from Lemma \ref{lem_explicit_srdiecko} that this is the case for $\catH = \LM H$.
\begin{lem}\noindent
\begin{enumerate}
\item There is a natural "associative product" (in other words, a (lax) monoidal structure on $\srdiecko : \catH \to \catH$): $$\comp: \srdiecko M\otimes\srdiecko N\to\srdiecko (M\otimes N)\,.$$
\item There is an "action" $$\Diamond:\srdiecko M\otimes X\to X\otimes M$$ that is natural\footnote{Naturality of $\Diamond$ in $X$ corresponds to $A$-linearity of all $\catH$-morphisms in the part \ref{lemA3} of Lemma \ref{lemA}.} in both arguments. It is an action in the sense that
$$\input{\cesta/obr36}=\input{\cesta/obr37}$$
\end{enumerate}
\end{lem}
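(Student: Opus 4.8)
The plan is to construct the two structure maps $\comp$ and $\Diamond$ using exactly the same recipe as in Lemma \ref{lemA}, only carried out for the ``twisted'' ends $\srdiecko M = \int_X \innH(X, X\tp M)$ rather than for $A = \int_X \innH(X,X)$. For the action $\Diamond$, I would take the extranatural family $\alpha^M_X : \srdiecko M \to \innH(X, X\tp M)$ that comes with the end, tensor with $\id_X$, and postcompose with the counit $\eeps_{X\tp M, X} : \innH(X, X\tp M)\tp X \to X\tp M$. This gives a map $\srdiecko M \tp X \to X\tp M$; as in the proof of Lemma \ref{lemA}, it is a composite of two extranatural transformations (the first in $X$ via the end, the second via the counit's extranaturality), hence an honest natural transformation in $X$. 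Naturality in $M$ is immediate from functoriality of $\srdiecko$ together with naturality of $\eeps$. So the only real content for part (2) is verifying the displayed ``action'' identity, which I address below.

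For the product $\comp$, I would first produce an extranatural transformation $\srdiecko M \tp \srdiecko N \to \innH(X, X\tp M\tp N)$ and then invoke the universal property of the end $\srdiecko(M\tp N) = \int_X \innH(X, X\tp M\tp N)$ to get the unique factorization through $\alpha^{M\tp N}_X$. To build that extranatural transformation: start with $\alpha^M_X \tp \alpha^N_Y : \srdiecko M \tp \srdiecko N \to \innH(X, X\tp M)\tp \innH(Y, Y\tp N)$; I want to ``compose'' the two inner-homs. The slick way is to use the action $\Diamond$ already constructed: the composite
\begin{equation*}
\srdiecko M \tp \srdiecko N \tp X \xrightarrow{\id\tp\Diamond} \srdiecko M \tp X \tp N \xrightarrow{\Diamond\tp\id} X \tp M \tp N
\end{equation*}
is natural in $X$, hence by the Lemma just proved (the bijection $\catH(N', M'\tp A)\cong Nat(N'\tp\argument, M'\tp\argument)$, in its $\srdiecko$-version, which is what Lemma \ref{lem_explicit_srdiecko} and the earlier Lemma give us) corresponds to a unique $\catH$-morphism $\srdiecko M \tp \srdiecko N \to \srdiecko(M\tp N)$. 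That is $\comp$. Associativity then follows because both $(\comp\circ(\comp\tp\id))$ and $(\comp\circ(\id\tp\comp))$ correspond, under that bijection, to the same iterated composite $\Diamond\circ(\Diamond\tp\id)\circ(\Diamond\tp\id\tp\id)$ of actions on $X$ — here one uses that $\Diamond$ itself is compatible with $\comp$, i.e. the very identity displayed in part (2), so the argument is mildly circular unless one is careful about the order: I would establish the action identity first from the definition of $\comp$ as the end-factorization, then read associativity off it.

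Concretely, I expect the cleanest logical order to be: (i) define $\Diamond$ via counit and check it is natural in both variables; (ii) define $\comp$ via the universal property of the end $\srdiecko(M\tp N)$, using the extranatural family obtained by plugging the $\Diamond$-composite into $\alpha^{M\tp N}$; (iii) the defining square for $\comp$ says precisely that $\alpha^{M\tp N}_X \circ \comp$, evaluated against $X$, equals $\Diamond\circ(\Diamond\tp\id)$ — which after unwinding $\alpha^{M\tp N}_X$ through the counit is exactly the action identity in part (2); (iv) associativity of $\comp$ then follows by the uniqueness clause of the end's universal property, since both triple composites induce the same extranatural map into $\innH(X, X\tp M\tp N\tp P)$. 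Naturality of $\comp$ in $M$ and $N$ is again formal from functoriality of all the ends and naturality of the counit.

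The main obstacle is bookkeeping rather than conceptual: in a genuinely non-strict (quasi-Hopf) setting the associator $\Phi$ intervenes every time one tensors an extranatural family with an identity and then composes, so the ``braid diagrams'' in the style of Lemma \ref{lemA} are doing real work in suppressing reassociations. I would present the argument diagrammatically exactly as in Lemma \ref{lemA} (citing the string-diagram conventions already set up), so that the coherence isotopies are visibly the only thing being used, and relegate the explicit $\Phi$-laden formulas — which one could in principle extract from \eqref{def_eeta}, \eqref{def_eeps} and Lemma \ref{lem_explicit_srdiecko} — to a remark. The one genuine check worth doing by hand is that the $\Diamond$-composite $(\Diamond\tp\id)\circ(\id\tp\Diamond)$ really is natural in $X$ (so that the bijection of the preceding Lemma applies to define $\comp$): this is where naturality of each $\Diamond$ in its $\catH$-argument, plus one interchange of the two factors, gets used, and it is the hinge on which the whole construction turns.
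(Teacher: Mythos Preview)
Your approach is correct and essentially the same as the paper's; the paper's proof is literally three lines, saying that the extranatural transformation
\[
\comp:\innH(Y,Z\otimes M)\otimes\innH(X,Y\otimes N)\to\innH(X,Z\otimes M\otimes N)
\]
yields $\comp:\srdiecko M\tp\srdiecko N\to\srdiecko(M\tp N)$ ``in the same way as in the definition of $\comp:A\tp A\to A$'', and likewise $\Diamond$ comes from $\eeps_{M\tp Y,X}$ ``in the same way as we defined the action~$\sipka$''. Your construction of $\Diamond$ is identical. For $\comp$ you take the adjoint route: build the iterated $\Diamond$-composite $\srdiecko M\tp\srdiecko N\tp X\to X\tp M\tp N$, observe it is natural in $X$, and factor through the end. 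This is the same construction transported across the tensor--hom adjunction, so nothing is lost or gained; the ``action identity'' in part (2) is then exactly the defining property of your factorization, as you note.

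One clarification: the bijection you invoke to define $\comp$ is the $Y=I$ case of what the paper later calls Lemma~\ref{lem_vlocka}, and as the Remark following that lemma points out, the $Y=I$ case is just the universal property of the end together with the closed structure --- it holds in any closed monoidal category and does not depend on Lemma~\ref{lem_explicit_srdiecko} or on $\catH=\LM H$. So there is no forward reference and no need to cite those later results here.
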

\begin{proof}
The extranatural transformation 
$$\comp:\innH(Y,Z\otimes M)\otimes\innH(X,Y\otimes N)\to\innH(X,Z\otimes M\otimes N)$$
gives rise to $\comp: \srdiecko M\otimes\srdiecko N\to\srdiecko (M\otimes N)$ in the same way as in the definition of $\comp: A\otimes A\to A$. Similarly we get $\Diamond:\srdiecko M\otimes X\to X\otimes M$ from the extranatural
$$\eeps_{M\otimes Y,X}:\innH(X,M\otimes Y)\otimes X\to M\otimes Y$$
in the same way as we defined the action $\sipka$.
\end{proof}
\begin{notation}
Generalising the definition of $\eps_A$ we can define a natural projection $\pi_M: \srdiecko M \to M$ to be equal to $\Diamond_{M,I}: \srdiecko M \tp I \to M \tp I$.
\end{notation}
\begin{rem}\label{rem_jalaa}
We see that $A=\srdiecko I$ and that $\comp$ gives us a left and a right $A$-action on any $\srdiecko M$. \footnote{We will see in (\ref{equa_commutativity_of_srdiecko}) how these two actions are related. Let us also remind the reader that there is yet a third action $\sipka:A\otimes \srdiecko M\to \srdiecko M$.} From associativity of $\comp$ we get that these two actions commute and thus $\srdiecko M$ is an $A$-$A$-bimodule. Another use of the associativity of $\comp$ gives us
$$\input{\cesta/obr38}=\input{\cesta/obr39}$$
what means that $\comp:\srdiecko M\otimes\srdiecko N\to \srdiecko(M\otimes N)$ is $A$-bilinear and thus induces an $A$-$A$-morphism\ $\srdiecko M\otimes_A\srdiecko N\to \srdiecko N$. This natural transformation turns $\srdiecko$\ into a lax\footnote{The rest of the article will actually imply that it is a strong monoidal functor at least for $\catH = \LM H$.}  monoidal functor $\srdiecko:\catH\to A\text{-Mod-}A$ where the product in $A\text{-Mod-}A$ is the tensor product over $A$. 
\end{rem}

Again we need a technical
\begin{lem}\label{lem_vlocka}
If $\catH$ is rigid or if it is the category of representations of a quasi-Hopf algebra then the map $\catH(X,Y\otimes \srdiecko M)\to Nat(X\otimes\argument\,, Y\otimes\argument\otimes M)$ given by
$$\input{\cesta/obr40}\mapsto\input{\cesta/obr41}$$
is an isomorphism.
\end{lem}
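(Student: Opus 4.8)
The plan is to construct an explicit inverse to the stated map, mimicking the proof of the analogous statement for $A = \srdiecko I$ (the unnumbered lemma preceding Lemma \ref{lem_A_is_in_Double}), which this generalises. First I would recall that, by the universal property of the end $\srdiecko M = \int_X \innH(X,X\tp M)$, giving a morphism $\psi: X \to Y\tp\srdiecko M$ is the same as giving a compatible family, and that the composite $X\tp\argument \to Y\tp\argument\tp M$ in the target is obtained by postcomposing $\psi\tp\id$ with $\eeps_{-,-}$ suitably twisted through $Y$; so the forward map is at least well-defined and natural, and it remains to produce the inverse.

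The key steps, in order: (1) Given a natural transformation $\theta: X\tp\argument \Rightarrow Y\tp\argument\tp M$, evaluate it at the generator $C$ (the left regular module, when $\catH = \LM H$) — or, in the rigid case, at a dualizable object and use the duality adjunction — to extract a morphism $X\tp C \to Y\tp C\tp M$. (2) Use the adjunction $\argument\tp C \dashv \innH(C,\argument)$ to transpose this into $X \to \innH(C, Y\tp C\tp M)$, and observe via naturality of $\theta$ (precompose with arbitrary $f\in\catH(C,C)$) that the image lands in the equalizer $\int_X\innH(X,Y\tp X\tp M)$ — this is exactly the bifunctor-generation statement of Lemma \ref{lem_explicit_srdiecko} applied to the bifunctor $X,X'\mapsto \innH(X, Y\tp X'\tp M)$ with the roles shifted. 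Hmm — more precisely one needs that $C$ generates $X',X''\mapsto\innH(X', Y\tp X''\tp M)$, which is the ``more generally'' clause of Lemma \ref{lem_explicit_srdiecko} with the appropriate identifications $M \rightsquigarrow Y$, $N\rightsquigarrow M$. (3) Identify $\int_{X'}\innH(X', Y\tp X'\tp M)$ with $Y\tp\srdiecko M$; in the rigid case this is the standard ``tensor pulls out of ends'' isomorphism $\int_{X'}\innH(X', Y\tp X'\tp M)\cong Y\tp\int_{X'}\innH(X',X'\tp M)$, which holds because $Y\tp\argument$ has a right adjoint and hence preserves the end, while in the quasi-Hopf case one invokes the explicit description from Lemma \ref{lem_explicit_srdiecko} directly. (4) Finally, check that the two constructions are mutually inverse: one direction is the universal property (the family reconstructed from $\theta$ evaluated at $C$ agrees with $\theta$ everywhere, again by generation), and the other is the uniqueness clause of the end's universal property.

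I expect the main obstacle to be step (3) together with the bookkeeping of coherence isomorphisms: one must be careful that the associativity constraints $X\tp(Y\tp Z)\to(X\tp Y)\tp Z$ (nontrivial in $\LM H$, governed by $\Phi$) are threaded through consistently so that ``$Y\tp\srdiecko M$'' really is the end of the twisted bifunctor and not merely isomorphic to it up to a constraint that then spoils naturality in $Y$ or $M$. A secondary subtlety is handling the two hypotheses ($\catH$ rigid versus $\catH = \LM H$) uniformly: in the rigid case everything is formal via duals, but when $H$ is infinite-dimensional one cannot dualize and must lean on the generator $C$ and the explicit kernel description of Lemma \ref{lem_explicit_srdiecko} throughout; I would either present the rigid argument and then remark that Lemma \ref{lem_explicit_srdiecko} supplies the same conclusion for $\LM H$, or — cleaner — run the generator-based argument, which covers both since a rigid category with a generator works and $\LM H$ has $C$. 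The computation itself, once the setup is fixed, should be a short diagram chase of the kind already carried out (suppressed) in the proof of Lemma \ref{lem_dual_of_morphisms} and the claim about $A$.
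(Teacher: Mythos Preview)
Your approach is essentially correct but more laborious than the paper's, and it contains one slip. The paper's argument is a single chain of isomorphisms
\[
Nat(X\tp\argument,\, Y\tp\argument\tp M) \;=\; \int_T \catH(X\tp T,\, Y\tp T\tp M) \;=\; \int_T \catH\big(X,\, \innH(T, Y\tp T\tp M)\big) \;=\; \catH\Big(X,\, \int_T \innH(T, Y\tp T\tp M)\Big),
\]
which reduces everything to your step (3): showing $Y\tp\srdiecko M \xto{\cong} \int_T \innH(T, Y\tp T\tp M)$. Your steps (1), (2), (4) --- evaluate at the generator, transpose, land in an equaliser, check inverses --- are an explicit unpacking of this chain (in particular of the step where $\catH(X,-)$ commutes with the end), so they are not wrong, just redundant. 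For step (3) in the $\LM H$ case, the paper does exactly what you suggest: it invokes the explicit description of both ends coming from Lemma~\ref{lem_explicit_srdiecko} and writes down the map as a concrete formula, checking it is bijective (Corollary~\ref{cor_skjfss}).

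For the rigid case the paper takes a different and slicker route than yours: rather than arguing that $Y\tp-$ preserves the end, it dualises $Y$ to the other side via $\catH(X\tp T,\, Y\tp T\tp M)\cong\catH(\ld Y\tp X,\, \innH(T,T\tp M))$, which reduces immediately to the trivial $Y=I$ case. Your limit-preservation argument also works, but note the slip: you wrote that $Y\tp-$ ``has a right adjoint and hence preserves the end'', which is backwards --- ends are limits, so you need $Y\tp-$ to \emph{be} a right adjoint, i.e.\ to have a \emph{left} adjoint (namely $\ld Y\tp-$). In a rigid category both adjoints exist, so the conclusion survives, but the stated reason is wrong.
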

\begin{rem}
Note that if there is no $Y$ (i.e. $Y= I$) then the lemma holds in any  category. Really, $$\catH( X \tp T, T \tp M ) = \catH\big( X , \innH( T, T\tp M) \big) $$ so the natural (in $T$) transformations $X \tp T \to T \tp M $ correspond to the dinatural (in $T$) transformations $X \to \innH (T,T\tp M) $ and thus to maps $X \to \srdiecko M$.
\end{rem}
\begin{proof}[Proof of Lemma \ref{lem_vlocka} for a rigid $\catH$]
We have  an  isomorphism 
$$ \catH( X \tp T , Y \tp T \tp M ) \cong \catH \big( \ld Y \tp X , ( T \tp M ) \tp \ld T \big) = \catH \Big( \ld Y \tp X , \innH ( T, T \tp M ) \Big) $$
$$ \input{\cesta1/5daco1} \mapsto \input{\cesta1/5daco2}$$
Thus the natural transformations $X \tp T \to Y\tp T \tp M $ correspond to dinatural transformations $ \ld Y \tp X \to \innH( T, T \tp M)$ that in turn correspond to maps $\ld Y \tp X \to \srdiecko M$ which  is the same as maps $ X \to Y \tp \srdiecko M $.
\end{proof}
\begin{claim}
The conclusion of Lemma \ref{lem_vlocka} holds provided the following map is iso:  
\begin{equation}
\int_{T\in\catH}\innH(T,Y\otimes T\otimes M)\xfrom{\cong}Y\otimes \srdiecko M.
\end{equation}
\end{claim}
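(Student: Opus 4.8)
The strategy is to recognise the map of Lemma~\ref{lem_vlocka} as a composite
$$\catH(X,\,Y\otimes\srdiecko M)\xrightarrow{\iota\circ(\argument)}\catH\!\left(X,\,\int_{T\in\catH}\innH(T,Y\otimes T\otimes M)\right)\xrightarrow{\Theta}Nat(X\otimes\argument,\,Y\otimes\argument\otimes M),$$
where $\iota\colon Y\otimes\srdiecko M\to\int_{T\in\catH}\innH(T,Y\otimes T\otimes M)$ is the canonical comparison morphism (an isomorphism, by hypothesis) and $\Theta$ is a bijection that is available in any closed monoidal category in which the displayed end exists. Granting such a factorisation the Claim follows at once, so there are really only two tasks: to construct $\Theta$, and to check that $\Theta\circ\big(\iota\circ(\argument)\big)$ is the pictured map.

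The bijection $\Theta$ is pure ends-and-adjunction bookkeeping, generalising the Remark following Lemma~\ref{lem_vlocka} by allowing the extra dummy object $Y$. Indeed $Nat(X\otimes\argument,\,Y\otimes\argument\otimes M)=\int_{T\in\catH}\catH(X\otimes T,\,Y\otimes T\otimes M)$; applying the adjunction $\argument\otimes T\dashv\innH(T,\argument)$ inside the end and then pulling the representable $\catH(X,\argument)$ out of the end (it preserves limits, hence ends) identifies this successively with $\int_{T\in\catH}\catH\big(X,\innH(T,Y\otimes T\otimes M)\big)$ and with $\catH\!\left(X,\int_{T\in\catH}\innH(T,Y\otimes T\otimes M)\right)$, the last step being legitimate precisely because this end is assumed to exist. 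Explicitly, $\Theta$ sends $\psi\colon X\to\int_{T\in\catH}\innH(T,Y\otimes T\otimes M)$ to the natural transformation whose $T$-component is $X\otimes T\xrightarrow{(\mu_T\circ\psi)\otimes\id_T}\innH(T,Y\otimes T\otimes M)\otimes T\xrightarrow{\eeps_{Y\otimes T\otimes M,T}}Y\otimes T\otimes M$, where $\mu_T$ is the structure projection of the end; its inverse is forced by the universal property of the end.

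It remains to see that, for $\phi\colon X\to Y\otimes\srdiecko M$, the natural transformation $\Theta\big(\iota\circ\phi\big)$ coincides with the one assigned to $\phi$ by Lemma~\ref{lem_vlocka}. Both are of the form $X\otimes T\xrightarrow{\phi\otimes\id_T}Y\otimes\srdiecko M\otimes T\xrightarrow{\xi_T}Y\otimes T\otimes M$ with $\xi_T$ independent of $\phi$ --- equal to $\eeps_{Y\otimes T\otimes M,T}\circ\big((\mu_T\circ\iota)\otimes\id_T\big)$ in the first case and to $\id_Y\otimes\Diamond_{M,T}$ in the second --- so it suffices to prove these two morphisms $\xi_T$ agree, and this is the real content of the argument. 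By construction of the comparison map, $\mu_T\circ\iota$ is $\id_Y\otimes\lambda_T$ followed by the canonical morphism $Y\otimes\innH(T,T\otimes M)\to\innH(T,Y\otimes T\otimes M)$, where $\lambda_T\colon\srdiecko M\to\innH(T,T\otimes M)$ is the structure projection of the end~\eqref{equa_srdiecko}; as that canonical morphism is, by definition, the one whose transpose under $\argument\otimes T\dashv\innH(T,\argument)$ is $\id_Y\otimes\eeps_{T\otimes M,T}$, tensoring with $\id_T$ and post-composing with $\eeps_{Y\otimes T\otimes M,T}$ collapses the whole expression to $\id_Y\otimes\big(\eeps_{T\otimes M,T}\circ(\lambda_T\otimes\id_T)\big)$, which is precisely $\id_Y\otimes\Diamond_{M,T}$ by the definition of the action $\Diamond$. (When $\catH=\LM H$ one can instead verify this identity by a short direct computation using the explicit formula~\eqref{def_eeps} for $\eeps$.) The only genuine obstacle is thus exactly this coherence step, tying together $\eeps$, the monoidal ``strength'' on $\innH$ and the end projections $\lambda_T$; everything preceding it is formal and makes no use of the quasi-Hopf structure.
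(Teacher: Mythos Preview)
Your proof is correct and follows essentially the same approach as the paper: the chain of bijections $Nat(X\otimes\argument,\,Y\otimes\argument\otimes M)=\int_T\catH(X\otimes T,\,Y\otimes T\otimes M)=\int_T\catH\big(X,\innH(T,Y\otimes T\otimes M)\big)=\catH\big(X,\int_T\innH(T,Y\otimes T\otimes M)\big)\cong\catH(X,Y\otimes\srdiecko M)$ is exactly what the paper writes. You go further than the paper in explicitly verifying that this composite bijection coincides with the pictured map (your third paragraph), a point the paper leaves tacit; this extra care is welcome and the coherence argument you give for it is sound.
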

\begin{proof}
\begin{align*}
Nat(X\otimes\argument\,, Y\otimes \argument\otimes M)&=\int_{T\in\catH}\catH(X\otimes T,Y\otimes T \otimes M)=\\
&=\int_{T\in\catH}\catH\big(X,\innH(T,Y\otimes T\otimes M)\big)=\\
&=\catH\big(X,\int_{T\in\catH}\innH(T,Y\otimes T\otimes M)\big)\xfrom{\cong}\\
&\xfrom{\cong}\catH(X, Y\otimes \srdiecko M)
\end{align*}
\end{proof}
\begin{proof}[Proof of Lemma \ref{lem_vlocka} for $\catH = \LM H$]
Just use the above claim and Corollary \ref{cor_skjfss}.
\end{proof}

In the rest of the article we assume that we have the above bijection between $Nat(X\otimes\argument\,, Y\otimes\argument\otimes M)$ and $\catH(X,Y\otimes \srdiecko M)$. Using this assumption we can for any $X\in ob(\catH)$ define a ``braiding between $\srdiecko M$ and $X$'' i.e. an $\catH$-morphism 
$$\beta_{\srdiecko M,X}: \srdiecko M\otimes X\to X\otimes \srdiecko M$$
by requiring
$$\input{\cesta/obr42}=\input{\cesta/obr43}$$
\begin{lem}\label{lem_ajhfka}
Using the above maps $\srdiecko M$ becomes an object in the centre $\catD$ of our $\catH$ and we can regard \ $\srdiecko$ as a functor \ $\srdiecko: \catH\to\catD$. The ``product'' $\comp:\srdiecko M \otimes \srdiecko N\to \srdiecko(M\otimes N)$ is a $\catD$-morphism. Moreover we have
\begin{equation} \label{equa_commutativity_of_srdiecko}
\input{\cesta/obr44}=\input{\cesta/obr45}
\end{equation}
\end{lem}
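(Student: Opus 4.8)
The plan is to verify the three assertions of Lemma \ref{lem_ajhfka} in turn --- that $\beta_{\srdiecko M,-}$ satisfies the centre axioms, that $\comp$ is a morphism in $\catD$, and the commutativity relation \eqref{equa_commutativity_of_srdiecko} --- using exactly the same strategy as in the proof of Lemma \ref{lem_A_is_in_Double}, namely translating every diagrammatic identity through the bijection $\catH(X,Y\otimes \srdiecko M)\cong Nat(X\otimes\argument,Y\otimes\argument\otimes M)$. The point is that $\beta_{\srdiecko M,X}$ is \emph{defined} so that, under this bijection, it corresponds to the tautological natural transformation; hence any identity between morphisms into a tensor product involving $\srdiecko M$ can be checked after transporting it to the side of natural transformations, where it typically becomes a triviality.

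First I would treat naturality of $\beta_{\srdiecko M,X}$ in $X$: given $g:X\to X'$, both $\beta_{\srdiecko M,X'}\circ(\id\otimes g)$ and $(g\otimes\id)\circ\beta_{\srdiecko M,X}$, when fed into the bijection, produce natural transformations built from $\Diamond$ and $g$ that agree by naturality of $\Diamond$ in its first argument (this is the $\srdiecko$-analogue of the footnote in Lemma \ref{lem_A_is_in_Double} about $A$-linearity of $\catH$-morphisms). Next, the hexagon axiom for $\beta_{\srdiecko M,X\otimes Y}$: apply the defining relation of $\beta$ three times exactly as in the hexagon computation for $A$, using associativity of the action $\Diamond$ established in the previous lemma. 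That $\comp:\srdiecko M\otimes\srdiecko N\to\srdiecko(M\otimes N)$ is a $\catD$-morphism --- i.e. commutes with the braidings --- is again the literal transcription of the ``compatibility between braiding and multiplication'' step in Lemma \ref{lem_A_is_in_Double}: doing $\comp$ then $\beta$ corresponds by definition to one natural transformation, doing $\beta$ (twice) then $\comp$ corresponds to another, and the two coincide because $\Diamond$ is an action. Finally, \eqref{equa_commutativity_of_srdiecko} relates the left $A$-action coming from $\comp$, the right $A$-action, and the braiding $\beta_{A,\srdiecko M}$ (equivalently $\beta_{\srdiecko M, A}$); I would prove it by the same ``action plus definition of $\beta$'' chain of equalities used for the commutativity of $A$ at the end of Lemma \ref{lem_A_is_in_Double}, the only new ingredient being that now the middle object carries a parameter $M$ which is simply carried along passively.

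The step I expect to be the genuine obstacle is pinning down precisely which $\catH$-morphisms the various string diagrams \dessin{obr44}, \dessin{obr45} denote, and checking that the bijection of Lemma \ref{lem_vlocka} is natural enough in all the relevant variables ($M$, $N$, and the tensor factors) that ``transport through the bijection'' is legitimate for the composite diagrams appearing here, not just for a single morphism. In other words, the conceptual content is low but one must be careful that the identification $Nat(X\otimes\argument,Y\otimes\argument\otimes M)\cong\catH(X,Y\otimes\srdiecko M)$ is compatible with composition, with tensoring by a fixed object, and with the product $\comp$; granting that (which follows from the construction of $\srdiecko$ as an end and the extranaturality of all the structure maps), every claim reduces to an identity of natural transformations that holds by associativity of $\Diamond$ or by naturality of $\Diamond$ in its first slot, exactly as recorded in the footnotes to Lemma \ref{lem_A_is_in_Double}.

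\begin{proof}
Everything is a verbatim generalisation of the proof of Lemma \ref{lem_A_is_in_Double}, with $A$ replaced by $\srdiecko M$, the action $\sipka$ replaced by $\Diamond$, and the bijection of the earlier lemma replaced by that of Lemma \ref{lem_vlocka}. We indicate the points where $M$ enters.

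Naturality of $\beta_{\srdiecko M,X}$ in $X$ follows because, under the bijection of Lemma \ref{lem_vlocka}, both sides of the naturality square correspond to the natural transformation obtained from $\Diamond$ by naturality in its first argument; this is the $\srdiecko$-analogue of part \ref{lemA3} of Lemma \ref{lemA}. The hexagon for $\beta_{\srdiecko M,-}$ is obtained, as for $A$, by applying the defining relation of $\beta$ three times and then invoking that $\Diamond$ is an action.

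That $\comp:\srdiecko M\otimes\srdiecko N\to\srdiecko(M\otimes N)$ is a $\catD$-morphism is the analogue of the ``compatibility between the braiding and the multiplication'' computation: doing the multiplication first and then the braiding corresponds by definition of $\beta$ to one natural transformation, doing the braiding (twice) first and then the multiplication corresponds to another, and the two agree because $\Diamond$ is an action and because all $\catH$-morphisms are compatible with $\Diamond$. The object $M\otimes N$ is simply transported along.

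Finally, formula \eqref{equa_commutativity_of_srdiecko} is proved exactly as the commutativity of $A$ at the end of Lemma \ref{lem_A_is_in_Double}: the first and last equalities hold because $\Diamond$ is an action, the middle ones are the definition of $\beta$ together with naturality of $\Diamond$ in its first argument, the only difference being that the output now carries the tensor factor $M$.
\end{proof}
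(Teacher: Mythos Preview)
Your proposal is correct and matches the paper's own proof essentially verbatim: the paper simply states that ``the proof is the same as that of Lemma \ref{lem_A_is_in_Double}'' and then displays the chain of equalities for \eqref{equa_commutativity_of_srdiecko} as the one illustrative example, which is precisely the strategy you describe (replace $A$ by $\srdiecko M$, $\sipka$ by $\Diamond$, and the earlier bijection by that of Lemma \ref{lem_vlocka}). Your write-up is in fact more explicit than the paper's about the individual steps and about the naturality needed for the bijection to transport the diagrams, but the underlying argument is identical.
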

\begin{rem}
Note however that in general
$$\input{\cesta/obr46} \neq \input{\cesta/obr45}$$
although the equality takes place when $M=I$ because then $\srdiecko M=\srdiecko I=A$.
\end{rem}
\begin{proof}
The proof is the same as that of lemma \ref{lem_A_is_in_Double}. For example the proof of the commutativity of $A$ translates into the following proof of the equation (\ref{equa_commutativity_of_srdiecko}):
$$\input{\cesta/obr47}=\input{\cesta/obr48}=\input{\cesta/obr49}=\input{\cesta/obr50}=\input{\cesta/obr51}$$
\end{proof}

We denote by $\catA$ the category of right $A$-modules in $\catD$. For $X\in \ob(\catA)$, the right $A$-action will be denoted
$$\input{\cesta1/1daco2A}$$
We can turn a right $A$-module in $\catD$ into a left $A$-module by
$$\input{\cesta1/1daco2opacne}  := \input{\cesta1/5daco3} $$
Thus we can equip $\catA$ with $\tp_A$. Remark \ref{rem_jalaa} together with Lemma \ref{lem_ajhfka} now imply that we can see $\srdiecko$ as a monoidal functor $\srdiecko: \catH \to \catA$.

\begin{lem}
If $M\in ob(\catD)$ then $\srdiecko M$ is canonically isomorphic in $\catA$ to the free $A$-module $M\otimes A$. This gives a monoidal\footnote{One must specify the monoidal structure on $\catD \xto{\argument \tp A} \catA$. We take the isomorphism $(M\tp A) \tp_A (N \tp A) \to (M \tp N) \tp A$ induced by the map $(M\tp A) \tp (N \tp A) \to (M \tp N) \tp A$ in which the first $A$ goes behind $N$ to join the second $A$.} natural isomorphism between $\catD \xto\forg \catH \xto\srdiecko \catA$ and $\catD \xto{\argument \tp A} \catA$.
\end{lem}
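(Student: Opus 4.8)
The plan is to construct the isomorphism $\srdiecko M \xrightarrow{\sim} M \otimes A$ in $\catA$ directly from the universal property of the end defining $\srdiecko M$, then verify it is right $A$-linear, a $\catD$-morphism, and monoidal.

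First I would produce the map $\srdiecko M \to M \otimes A$. Since $M \in \ob(\catD)$, $M$ comes with a half-braiding $\beta_{M,-}$, and for every $X$ we have the morphism $\innH(X, X \otimes M) \xrightarrow{} \innH(X, M \otimes X)$ obtained by post-composing with $\beta^{-1}_{M,X}: X \otimes M \to M \otimes X$. Applying the end projection $\alpha_X: \srdiecko M \to \innH(X, X\otimes M)$ and this map gives an extranatural family $\srdiecko M \to \innH(X, M \otimes X)$, which (pulling $M$ out, using that $\innH(X, M\otimes X) \cong M \otimes \innH(X,X)$ — here is where I would invoke the Claim/Corollary that $M \otimes \srdiecko(-)$ computes the corresponding end, applied with the $M$-factor on the left) factors uniquely through $M \otimes A = M \otimes \int_X \innH(X,X)$. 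For the inverse, the free $A$-module $M \otimes A$ has, by Lemma~\ref{lem_vlocka} (the bijection $\catH(M \otimes A, M \otimes \srdiecko M)$ — more precisely one exhibits a natural transformation $M \otimes A \otimes (-) \to M \otimes (-) \otimes M$), a canonical comparison map; concretely one sends $m \otimes a \mapsto m \cdot (a \sipka -)$, i.e. uses the $\sipka$-action of $A$ on $M$ twisted by the half-braiding. I would then check these two maps are mutually inverse by a diagram chase on the end projections, or, for $\catH = \LM H$, by the explicit formula in Lemma~\ref{lem_explicit_srdiecko}: $\srdiecko M = \{ l_{\sum m_i \otimes a_i} : h \mapsto \sum m_i \otimes a_i h \} \cong M \otimes H = M \otimes A$ as a vector space, and the two constructions above both realise this identification.

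Next I would check the structural compatibilities. Right $A$-linearity of $\srdiecko M \to M \otimes A$: the right $A$-action on $\srdiecko M$ is $\comp: \srdiecko M \otimes \srdiecko I \to \srdiecko M$ and on $M \otimes A$ it is multiplication in $A$; both are computed from composition of inner homs, so the compatibility is immediate from how $\comp$ is defined via the end. That the map is a $\catD$-morphism amounts to checking it intertwines $\beta_{\srdiecko M, -}$ with the half-braiding on $M \otimes A$ (which is the one built from $\beta_{M,-}$ and $\beta_{A,-}$); this follows from the defining equations of $\beta_{\srdiecko M, -}$ and $\beta_{A,-}$ together with naturality, essentially reusing the proof of Lemma~\ref{lem_ajhfka}. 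Finally, monoidality: I would compare $\comp: \srdiecko M \otimes_A \srdiecko N \to \srdiecko(M \otimes N)$ with the stated map $(M \otimes A) \otimes_A (N \otimes A) \to (M \otimes N) \otimes A$ under the isomorphism; this is again a chase through the definition of $\comp$ on ends and the specified monoidal structure on $\argument \otimes A$, where the only subtlety is that moving ``the first $A$ behind $N$'' must match the half-braiding twist present in $\comp$.

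The main obstacle I expect is not any single verification but making the identification $\innH(X, M \otimes X) \cong M \otimes \innH(X,X)$ and its interaction with the end precise and natural — i.e. genuinely getting the isomorphism $M \otimes \srdiecko(-) \xrightarrow{\sim} \int_X \innH(X, M \otimes X \otimes -)$ (the Claim before the proof of Lemma~\ref{lem_vlocka}, here with $N = I$) to cooperate with the half-braiding so that the resulting map lands in $\catA$ and not merely in $\catH$. Once that compatibility is pinned down, right $A$-linearity, the $\catD$-morphism property, and monoidality all reduce to re-running, mutatis mutandis, the diagrammatic arguments already given for Lemmas~\ref{lem_A_is_in_Double} and \ref{lem_ajhfka}. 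For the reader's convenience I would probably also include the explicit $\LM H$ check as a sanity verification, since there the statement becomes the transparent fact that $\srdiecko M \cong M \otimes H$ with $H$ acting by the adjoint-type action and $A = H$ acting by left multiplication in the $H$-slot.
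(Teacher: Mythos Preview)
Your proposal is correct and follows essentially the same route as the paper. The paper packages the construction a bit more concretely: rather than passing through the identification $\int_X \innH(X, M\otimes X) \cong M\otimes A$ abstractly, it defines mutually inverse maps $s_M:\srdiecko M\to M\otimes A$ and $t_M:M\otimes A\to\srdiecko M$ directly via the Lemma~\ref{lem_vlocka} characterization (i.e.\ by specifying the associated natural transformations built from $\Diamond$, $\sipka$, and the half-braiding of $M$), and then verifies invertibility, $\catD$-compatibility, and right $A$-linearity by string-diagram manipulations --- exactly the ``re-running Lemmas~\ref{lem_A_is_in_Double} and~\ref{lem_ajhfka}'' you anticipate. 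Your worry about making $M\otimes\innH(X,X)\cong\innH(X,M\otimes X)$ cooperate with the half-braiding is precisely what the paper sidesteps by working with $s_M$ and $t_M$ from the start; one small slip in your sketch is that the inverse should correspond (via Lemma~\ref{lem_vlocka}) to a natural transformation $(M\otimes A)\otimes T\to T\otimes M$, not to one landing in $M\otimes T\otimes M$.
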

\begin{proof}
One has $\srdiecko M=\int_{X\in \catH}\innH(X,X\otimes M)=\int_{X\in \catH}\innH(X,M\otimes X)\cong M\otimes A$. However to prove the fact that the isomorphism is in fact in $\catD$ we need a more explicit description. Define the natural maps $s_M:\srdiecko M \to M \tp A$ and $t_M : M \tp A \to \srdiecko M$ by
\begin{equation}\label{equa_sM}
\input{\cesta/obr52}\text{ s.t. }\input{\cesta/obr53}=\input{\cesta/obr54}
\end{equation}
\begin{equation}\label{equa_tM}
\input{\cesta/obr55}\text{ s.t. }\input{\cesta/obr56}=\input{\cesta/obr57}
\end{equation}
Using our pictures, one can directly check that these maps are mutually inverse and that they are compatible with the braidings (i.e. they are in $\catD$) and right $A$-linear. For instance the proof of the compatibility with braidings looks like this:
\begin{align*}
\input{\cesta/obr58}&=\input{\cesta/obr59}=\input{\cesta/obr60}= \\
& = \input{\cesta/obr61}=\input{\cesta/obr62}
\end{align*}
\end{proof}


\section{ordinary Hopf algebra case}\label{section_ordinary_Hopf}
In this section, $\catH= \LM H$ for a Hopf algebra $H$. In this case we show that the theorem \ref{thm_aaa} becomes the well known structure theorem for Hopf modules.
\begin{rem}
In the other sections, the diagrams are in $\catH$, i.e. all the morphisms used in them are in $\catH$. In this section, we allow diagrams in the underlying category of vector spaces. In particular, we can interchange two objects:
$$\begin{tikzpicture}[baseline=0.4cm]
\node at (0.0,0.0) {{\scriptsize $X$}};
\node at (0.6,0.0) {{\scriptsize $Y$}};
\node at (0.0,0.9) {{\scriptsize $Y$}};
\node at (0.6,0.9) {{\scriptsize $X$}};
\draw [line width=1pt] (0.0,0.15) ..controls +(0.0,0.33) and +(0.0,-0.33) .. (0.6,0.75);
\draw [line width=1pt] (0.6,0.15) ..controls +(0.0,0.33) and +(0.0,-0.33) .. (0.0,0.75);
\end{tikzpicture}
$$
\end{rem}

\subsection{Algebra $A$.} From lemma \ref{lem_explicit_srdiecko} we see that $A$ can be identified with $H$ that is an $\catH$-object via the $H$-action by adjunction. The Drinfeld-Yetter coaction is given by the left $H$-comultiplication. The multiplication in $A$ is just the multiplication in $H$. 

\subsection{Objects of $\catA$}
An object $M\in \catA$ is a vector space with left action, left coaction and right action\footnote{We get the first one because $M\in\catH=\LM H$, the second one is the Drinfeld-Yetter coaction ($M\in\catD$) and the third is the right $A$-action.}
\begin{equation}\label{tri_operacie}
 \input{\cesta1/4daco1} \quad \input{\cesta1/4daco2} \quad  \input{\cesta1/1daco2} 
\end{equation}
that satisfy\footnote{We get (\ref{C1}) because $M\in \catD$ is a Drinfeld-Yetter module, (\ref{C2}) means that the $A$-module structure is $H$-linear and (\ref{C3}) holds because the $A$-module structure is a $\catD$-morphism and thus compatible with the $H$-comodule structure. }
\begin{align*}
\end{align*}
\begin{align}
\label{C1}   \input{\cesta1/6daco2} = \input{\cesta1/6daco1} \\
\label{C2}  \input{\cesta1/6daco4} = \input{\cesta1/6daco3} \\
\label{C3}  \input{\cesta1/6daco5} = \input{\cesta1/6daco6} 
\end{align}
\subsection{Objects of $\catA$ as Hopf-bimodules}
Let's introduce a new left $H$-action
$$
\input{\cesta1/4daco3} := \input{\cesta1/6daco7}
$$
Then the operations 
\begin{equation}\label{ine_tri_operacie}
 \input{\cesta1/4daco3} \quad \input{\cesta1/4daco2} \quad  \input{\cesta1/1daco2H} 
\end{equation}
still contain the same information as did (\ref{tri_operacie}). The conditions satisfied by this new set of operations are
\begin{align}
 \input{\cesta1/6daco11} &= \input{\cesta1/6daco10} \\
 \input{\cesta1/6daco8} &= \input{\cesta1/6daco9}
\end{align}
and the old condition (\ref{C3}). But this means that $M$ together with the operations (\ref{ine_tri_operacie}) is a Hopf-bimodule.
\subsection{Functor $\srdiecko:\catH \to \catA$} For $X\in \catH$ we take $\srdiecko X:= H\tp X$ with the above operations defined by
\begin{equation}\label{Hopf_srdiecko}
\begin{array}{lll}
a) \input{\cesta1/5daco7} := \input{\cesta1/5daco6} &\quad& b) \input{\cesta1/5daco11} := \input{\cesta1/5daco10} \\
c)  \input{\cesta1/5daco5} := \input{\cesta1/5daco4} && d) \input{\cesta1/5daco9} := \input{\cesta1/5daco8}
\end{array}
\end{equation}
Thus we see that the fact that $\srdiecko:\catH \to \catA$ is an equivalence of categories can be reformulated as
\begin{thm}
The functor from $H$-modules to $H$-Hopf-bimodules given by $X\mapsto H\tp X$ and the operations b), c), d) in (\ref{Hopf_srdiecko}) is an equivalence of categories. 
\end{thm}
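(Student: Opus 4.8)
This is the two-sided form of the classical fundamental theorem of Hopf modules; having translated the objects of $\catA$ into Hopf bimodules above, and since $\srdiecko$ is manifestly the functor $X\mapsto H\tp X$ appearing there, one may in principle just quote the structure theorem for Hopf bimodules of \cite{HN}. We indicate a direct argument nonetheless. The plan is to produce an explicit quasi-inverse $G\colon\catA\to\catH$ and to verify that the two canonical natural transformations $G\circ\srdiecko\Rightarrow\id_{\catH}$ and $\srdiecko\circ G\Rightarrow\id_{\catA}$ are isomorphisms.

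For $M\in\catA$, carrying the left $H$-action, the Drinfeld--Yetter left coaction and the right $A$-action of (\ref{tri_operacie}), let $GM\subseteq M$ be the space of coinvariants of the coaction. Conditions (\ref{C1})--(\ref{C3}) say that $M$ is a Drinfeld--Yetter module, that the right $A$-action is $H$-linear, and that it is compatible with the coaction; from these one reads off that the left $H$-action maps $GM$ into itself, so $GM$ is an object of $\catH$, and $G$ is defined on morphisms by restriction (this is well defined since morphisms in $\catA$ preserve all structures). The isomorphism $G\circ\srdiecko\cong\id_{\catH}$ is then immediate: for $X\in\catH$ the coaction on $\srdiecko X=H\tp X$ is operation (b) of (\ref{Hopf_srdiecko}), essentially the comultiplication on the $H$-leg, whence its coinvariants form the subspace $1\tp X\cong X$, on which the induced left $H$-action is the original action on $X$.

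The isomorphism $\srdiecko\circ G\cong\id_{\catA}$ is the heart of the matter. For $M\in\catA$ the candidate $\mu\colon \srdiecko(GM)=H\tp GM\to M$ is the map built from the module actions defining $\srdiecko$ (operations (c), (d) of (\ref{Hopf_srdiecko})) --- act with the $H$-leg on a coinvariant. Its two-sided inverse is the Sweedler map $m\mapsto m_{(-1)}\tp P(m_{(0)})$, where $m\mapsto m_{(-1)}\tp m_{(0)}$ is the coaction and $P\colon M\to M$ is the idempotent obtained by applying the antipode to the coaction-leg and feeding it back through the appropriate module action; $P$ takes values in $GM$ and is the identity on $GM$, by the antipode and counit axioms. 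That $\mu$ and the Sweedler map are mutually inverse is the standard Hopf-module computation with $S$ and $\eps$. It then remains to check that $\mu$ respects the full $\catA$-structure, i.e. that it intertwines the right $H$-action, the left coaction, and --- crucially --- the twisted left $H$-action introduced just before (\ref{ine_tri_operacie}); here (\ref{C1})--(\ref{C3}) are exactly the relations that make these diagrams commute.

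I expect this last point --- compatibility of the Sweedler isomorphism with the \emph{twisted} (conjugation-type) left $H$-action, as opposed to merely with the right module and left comodule structures that occur in the textbook statement --- to be the only real obstacle; everything else is either the classical Sweedler argument or routine bookkeeping with coinvariants.
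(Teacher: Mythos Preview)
Your opening remark is exactly what the paper does: the sentence following the theorem is literally ``And this is the well known structure theorem of Hopf-bimodules,'' i.e.\ the paper treats this as a citation rather than something to be proved here. So a direct argument goes beyond what the paper supplies.

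That said, your direct argument has a genuine gap at the very first step. You claim that for $M\in\catA$ the Yetter--Drinfeld left $H$-action preserves the coinvariants $GM=M^{coH}$. This is false already for $M=\srdiecko X=H\tp X$: the $\catH$-action (operation~a)) is the adjoint-type action $h\rhd(a\tp x)=h_{(1)}aS(h_{(3)})\tp h_{(2)}\rhd x$, and on a coinvariant $1\tp x$ this gives $h_{(1)}S(h_{(3)})\tp h_{(2)}\rhd x$, which lies in $1\tp X$ only when $H$ is cocommutative. The Yetter--Drinfeld compatibility (\ref{C1}) yields $\delta(h\rhd m)=h_{(1)}m_{(-1)}S(h_{(3)})\tp h_{(2)}\rhd m_{(0)}$, which for $m$ coinvariant is $h_{(1)}S(h_{(3)})\tp h_{(2)}\rhd m$ --- not $1\tp(h\rhd m)$ in general. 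Neither does the Hopf-bimodule left action (operation~c)) preserve coinvariants, for the same reason. So $GM$ as you define it is not an $H$-submodule, and the sentence ``on which the induced left $H$-action is the original action on $X$'' has no meaning.

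The fix is to realise that the correct quasi-inverse is a \emph{quotient}, not a subobject: the paper's $\budzogan M = M\tp_A I = M/(M\cdot\ker\eps)$ does carry the descended $\catH$-action, precisely because the right $A$-action is $H$-linear (condition~(\ref{C2})). In the Hopf case coinvariants and this quotient are isomorphic as vector spaces via your Sweedler map, and the $H$-module structure on $M^{coH}$ must be \emph{transported} along that isomorphism --- equivalently, defined as $h\star n:=P(h\rhd n)$ with $P$ the antipodal projector --- rather than obtained by restricting an ambient action. Once you make that adjustment the rest of your outline goes through; the obstacle you flagged at the end (compatibility with the twisted action) is in fact automatic once the $H$-structure on $GM$ is set up correctly, whereas the step you called ``immediate'' is where the actual work lies.
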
 
And this is the well known structure theorem of Hopf-bimodules.


\section{The functor $\budzogan: \catA \to \catH$}\label{section_budzogan}
The unit object $I\in \catH$ is an $A$-module via $\sipka$.\footnote{Equivalently, $I$ is an $A$-module thanks to the augmentation $\eps_A$ on $A$ in $\catH$.}
For $M\in \catA$ define\footnote{This tensor product must be taken in $\catH$ since the $A$-module structure of $I$ is not in $\catD$.} 
$$\budzogan M := M \tp_A I.$$

\begin{prop}
$\budzogan$ is a strong monoidal functor.
\end{prop}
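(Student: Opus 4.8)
The plan is to verify that $\budzogan M := M \tp_A I$ carries the structure of an object in $\catH$ functorially, then exhibit natural isomorphisms $\budzogan(M \tp_A N) \cong \budzogan M \tp \budzogan N$ and $\budzogan A \cong I$, and finally check the coherence (associativity and unit) pentagon/triangle diagrams. First I would note that $\budzogan$ is well-defined as a functor $\catA \to \catH$ because $M \tp_A I$ is a coequalizer in $\catH$ (the tensor over $A$ is taken in $\catH$, as the footnote stresses, since $I$ is only an $A$-module in $\catH$ and not in $\catD$), and coequalizers are preserved by the relevant functors; a morphism $f: M \to N$ in $\catA$ induces $f \tp_A \id_I$.

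The heart of the proof is the monoidal structure. For $M, N \in \catA$, I want a natural isomorphism $\mu_{M,N}: \budzogan M \tp \budzogan N = (M \tp_A I) \tp (N \tp_A I) \to (M \tp_A N) \tp_A I = \budzogan(M \tp_A N)$. The key computation is that, since $I$ is the unit object, $M \tp_A I$ is a quotient of $M$, and similarly $(M \tp_A N)\tp_A I$ is a quotient of $M \tp_A N$; the map should be induced by $M \tp N \to M \tp_A N$ after identifying $(M \tp_A I)\tp(N\tp_A I)$ appropriately. Concretely I expect to use that the left $A$-action on $N$ (coming, via the turning-around construction $\dessinn{1daco2opacne} := \dessinn{5daco3}$, from the right action and the braiding $\beta_{A,N}$) is exactly what is divided out when forming $M \tp_A N$, so that tensoring the two coequalizers and then coequalizing over $A$ once more gives the same object as $(M\tp N) \tp_A I$ with the diagonal-type action. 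I would present this as: $\budzogan M \tp \budzogan N$ is the coequalizer of two maps $M \tp A \tp N \tp A \rightrightarrows M \tp N$, and $\budzogan (M \tp_A N)$ is the coequalizer of $M \tp_A N \tp A \rightrightarrows M \tp_A N$, and a diagram chase with the braidings shows these present isomorphic objects. For the unit, $\budzogan A = A \tp_A I \cong I$ canonically, since $A$ is the monoidal unit of $\catA$ (the free module $I \tp A$).

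The main obstacle will be checking that $\mu_{M,N}$ is genuinely well-defined and invertible, i.e. descends through all the coequalizers — this is where one must carefully juggle the three different $A$-actions floating around ($\sipka$, the left and right $\comp$-actions, and the braiding $\beta_{A,-}$) and invoke commutativity of $A$ in $\catD$ (Lemma \ref{lem_A_is_in_Double}) so that the left action obtained by turning the right one around is compatible with everything. Once $\mu$ and the unit iso are in place, the associativity coherence $\budzogan M \tp (\budzogan N \tp \budzogan P) \cong \budzogan(M \tp_A N \tp_A P) \cong (\budzogan M \tp \budzogan N)\tp \budzogan P$ and the unit coherences follow from the universal properties of the coequalizers together with the associativity and unit constraints of $\tp_A$ in $\catA$ and of $\tp$ in $\catH$ — these are formal once the structure maps are identified as the canonical ones. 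I would organize the write-up so that "strong" (as opposed to merely lax) amounts precisely to the invertibility established in the obstacle step, and remark that naturality of $\mu$ in $M$ and $N$ is immediate from functoriality of the coequalizer construction.
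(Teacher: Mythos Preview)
Your approach is essentially the same as the paper's: both establish the key isomorphism $(M \tp_A I)\tp (N \tp_A I) \cong (M \tp_A N) \tp_A I$ by recognizing each side as a quotient of $M\tp N$ satisfying the same universal property. The paper phrases this very compactly, characterizing both objects as the universal $U$ equipped with a map $M\tp N\to U$ that coequalizes the relevant $A$-actions, and leaves the coherence diagrams implicit (they follow, as you say, from the universal-property description).

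One point worth sharpening: you name ``commutativity of $A$ in $\catD$'' as the tool that makes the coequalizer comparison go through, but the paper singles out a different (if related) fact. The subtle step is showing that, after killing the right $A$-action on $N$ via $\eps_A$, the \emph{left} $A$-action on $N$ (obtained by braiding $A$ over $N$ and then right-acting) also collapses to $\eps_A$. This uses the compatibility $(\id_N\tp\eps_A)\circ\beta_{A,N}=\eps_A\tp\id_N$, which is essentially Observation~\ref{obs_sipka_a_epsA}; the paper's footnote stresses that this relies on $N$ crossing \emph{over} $A$ (the half-braiding direction coming from $N\in\catD$) and would fail the other way. Commutativity of $A$ is not what is invoked here. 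With that correction your outline is sound.
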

\begin{proof}
We should show that for $M, N \in \catA$ we have an $\catH$-isomorphism 
$$(M\tp_A I)\tp (N \tp_A I) \;\cong\; (M \tp_A N) \tp_A I .$$
And really, both sides can be characterized as a universal $\catH$-object $U$ together with an $H$-morphism $M\tp N \xto u U$ satisfying\footnote{There is a subtle point here. To show that $(M\tp_A I)\tp (N \tp_A I)$ satisfies (\ref{equa_fkjds}) one needs to use 
$$ \input{\cesta1/7daco4} = \input{\cesta1/7daco5}$$ This would not hold if $N$ crossed under $A$.}
\begin{equation}\label{equa_fkjds}
\input{\cesta1/7daco1} = \input{\cesta1/7daco2}= \input{\cesta1/7daco3}
\end{equation}
\end{proof}

\begin{lem}
For $M \in \catH$, the $\catH$-morphism $\Diamond_{M,I}: \srdiecko M \tp I \to M\tp I= M$ is inner $A$-bilinear and thus induces an $\catH$-morphism $\srdiecko M \tp_A I \to M$. This gives a monoidal natural transformation $\budzogan\circ \srdiecko\to\id_\catH$.

If $\catH$ is the category of representations of a quasi-Hopf algebra then it is an isomorphism.
\end{lem}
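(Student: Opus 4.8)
Write $\pi_M:=\Diamond_{M,I}\colon \srdiecko M=\srdiecko M\tp I\to I\tp M=M$. The statement has three parts: (i) that $\pi_M$ descends to an $\catH$-morphism $\bar\Diamond_M\colon\budzogan\srdiecko M=\srdiecko M\tp_A I\to M$; (ii) that $M\mapsto\bar\Diamond_M$ is a monoidal natural transformation $\budzogan\circ\srdiecko\Rightarrow\id_\catH$; (iii) that each $\bar\Diamond_M$ is invertible when $\catH=\LM H$. Parts (i) and (ii) are formal; (iii) is where the quasi-Hopf data enter, and that is the step I expect to be the real obstacle.

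For (i), recall from Remark \ref{rem_jalaa} that $\srdiecko M$ is a right $A$-module via $\comp\colon\srdiecko M\tp A=\srdiecko M\tp\srdiecko I\to\srdiecko(M\tp I)=\srdiecko M$, and that $I$ is a left $A$-module via $\eps_A=\sipka_I$. By the universal property of $\tp_A$ it suffices to check that $\pi_M$ is $A$-balanced, i.e.\ $\pi_M\circ\comp=\pi_M\circ(\id_{\srdiecko M}\tp\eps_A)\colon\srdiecko M\tp A\to M$ --- this balancedness is what ``inner $A$-bilinear'' comes down to as far as the coequaliser is concerned. I would derive it from the identity $\pi_{M\tp N}\circ\comp=\pi_M\tp\pi_N$ applied with $N=I$ (so $\pi_I=\eps_A$); and $\pi_{M\tp N}\circ\comp=\pi_M\tp\pi_N$ is just the ``action'' property of $\Diamond$ recorded earlier, evaluated at $X=I$, together with associativity of $\comp$. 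For (ii): naturality of $\bar\Diamond$ in $M$ descends from naturality of $\Diamond$ in its first variable; and compatibility with the monoidal structures --- $\budzogan$ strong monoidal through $(\srdiecko M\tp_A I)\tp(\srdiecko N\tp_A I)\cong(\srdiecko M\tp_A\srdiecko N)\tp_A I$ (the Proposition above), $\srdiecko$ lax monoidal through $\comp$, and $\id_\catH$ trivially monoidal --- again reduces to $\pi_{M\tp N}\circ\comp=\pi_M\tp\pi_N$ for the compatibility square, and to $\eps_A\circ(\text{algebra unit of }A)=\id_I$ (augmentedness of $A$) for the unit axiom.

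For (iii) I would compute explicitly using Lemma \ref{lem_explicit_srdiecko}. There $\srdiecko M=\int_X\innH(X,X\tp M)$ is identified with the vector space $H\tp M$, the element $\xi=\sum_i a_i\tp m_i$ corresponding to $l_\xi\colon h\mapsto\sum_i(a_i\cdot h)\tp m_i$; under this identification $A=\srdiecko I$ is $H$ with $a\leftrightarrow l_a$, the augmentation $\eps_A$ is the counit $\eps\colon H\to\field$, the right $A$-action $\comp$ on $\srdiecko M$ is an explicit (generally $\Phi,\alpha,\beta$-twisted) right $H$-action --- namely the composition of inner homs spelled out with \eqref{def_eeps}, carrying the passive factor $M$ along --- and $\pi_M\colon H\tp M\to M$ is likewise read off from \eqref{def_eeps}. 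Thus $\budzogan\srdiecko M=(H\tp M)\tp_H\field_\eps$, with $\field_\eps$ the counit module, and $\bar\Diamond_M$ is the map induced by $\pi_M$. I would finish by checking, with the quasi-Hopf axioms \eqref{H1}--\eqref{H4} (and \eqref{B1}--\eqref{B4} for the associator bookkeeping), that $\srdiecko M$ is a free right $A$-module on a copy $M'$ of $M$ in $\catH$: concretely, that a suitable $\alpha,\beta,\Phi$-dressed $\catH$-morphism $M\to\srdiecko M$ splits $\pi_M$ and extends to an isomorphism $M'\tp A\xrightarrow{\cong}\srdiecko M$ of right $A$-modules with $M'\cong M$ in $\catH$. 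Granting this, $\budzogan\srdiecko M=\srdiecko M\tp_A I\cong(M'\tp A)\tp_A I=M'\tp(A\tp_A I)=M'\tp I=M'\cong M$, and the composite isomorphism is exactly $\bar\Diamond_M$. When $H$ is an honest Hopf algebra everything collapses to $\srdiecko M=H\tp M$ with right multiplication on $H$, $\pi_M=\eps\tp\id_M$, and the splitting $m\mapsto 1\tp m$, recovering Section \ref{section_ordinary_Hopf}.

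The main obstacle is this last identification --- producing the right $\alpha,\beta,\Phi$-twisted section $M\to\srdiecko M$ and verifying via \eqref{H1}--\eqref{H4} that it trivialises $\srdiecko M$ as a right $A$-module, equivalently that the coequaliser $(H\tp M)\tp_H\field_\eps$ collapses onto $M$ through $\bar\Diamond_M$. A cleaner route, should it be available, is to note that the isomorphism $\srdiecko M\cong M\tp A$ of the last lemma of Section \ref{section_srdiecko} (stated there in $\catA$, and only for $M\in\catD$) in fact holds in $\catH$, as an isomorphism of right $A$-modules, for every $M\in\catH$; then $\budzogan\srdiecko M=(M\tp A)\tp_A I\cong M$ with no further quasi-Hopf computation.
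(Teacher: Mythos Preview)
Your handling of (i) and (ii) is fine and matches the paper's implicit treatment; the identity $\pi_{M\tp N}\circ\comp=\pi_M\tp\pi_N$ is exactly the action axiom for $\Diamond$ evaluated at $X=I$, and the balancedness and monoidality follow from it as you say.

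For (iii) the paper takes a different route from both of yours, and each of your proposed routes has a real problem.

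Your ``cleaner route'' does not work as stated: the maps $s_M,t_M$ at the end of Section~\ref{section_srdiecko} are defined using the braiding of $M$ with the probe object $T$ (look at \eqref{equa_sM}, \eqref{equa_tM}), so they are only available for $M\in\catD$. For a general $M\in\catH$ there is no such braiding, and the naive swap is not $H$-equivariant (it would require coassociativity of $\Delta$).

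Your main route --- showing that $\srdiecko M$ is free as a right $A$-module \emph{in $\catH$} on a copy of $M$ --- is a strictly stronger statement than what is needed, and the paper does not establish it. Whether a suitably $\alpha,\beta,\Phi$-dressed $\catH$-section exists and extends to an $\catH$-equivariant right-$A$-module isomorphism is exactly the obstacle you flag, and it is not resolved.

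The paper's key observation is that you don't need freeness in $\catH$ at all: since $\bar\Diamond_M$ is already an $\catH$-morphism, it suffices to check that it is a \emph{linear} bijection. The paper does this by building a commutative diagram of \emph{vector spaces} (not $H$-modules) comparing the coequaliser presentation $\srdiecko M\tp A\rightrightarrows\srdiecko M\to M$ with the standard one $(M\tp A)\tp A\rightrightarrows M\tp A\to M$ for $(M\tp A)\tp_A I\cong M$. The middle vertical isomorphism is the deliberately non-equivariant swap $m\tp a\mapsto a\tp m$; the left vertical isomorphism is then manufactured from the explicit elements $\kappa,\lambda\in H^{\tp5}$ appearing in the right $A$-action formulas (Corollary~\ref{cor_product_formulas}) precisely so that the squares commute. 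Exactness of the bottom row transports to the top, giving bijectivity of $\bar\Diamond_M$. So the missing idea in your proposal is that the comparison need not be made inside $\catH$; dropping to $\field$-vector spaces is what makes the argument go through without ever producing an $H$-equivariant trivialisation of $\srdiecko M$.
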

\begin{proof}
We want to show that $\srdiecko M \tp_A I \xto\cong M$ if $\catH = \LM H$. This is equivalent to the exactness of the upper row in the diagram (\ref{diag_slnko}). To prove it, we compare $\srdiecko M$ to the free $A$-module $M\tp A$. It is clear that (in $\catH$) we have $(M \tp A)\tp_A I \cong M$ which translates as the exactness of the bottom row of the diagram (\ref{diag_slnko}). Now we want to join the two rows by vertical $\catV$-isomorphisms into a commutative diagram
\begin{equation}\label{diag_slnko}
\begin{tikzpicture}[label distance =-0.5 mm, baseline=-1 cm]	
	\node(lx1) at (6,0){};
	\node(lx2) at (2.5,0){};
	\node(lx3) at (1.5,0){};
	\node(lx4) at (3.1,0){};
	\node(ly) at (0, -2){};
	\pgfresetboundingbox;

	\node(a00) {$\srdiecko M \tp A$};
	\node(a01) at ($(a00)+(lx1)$) {$\srdiecko M$};
	\node(a02) at ($(a01)+(lx2)$) {$M$};
	\node(a03) at ($(a02)+(lx3)$) {$0$};
	
	\node(a10) at ($(a00)+(ly)$) {$(M \tp A) \tp A$};
	\node(a11) at ($(a01)+(ly)$) {$ M \tp A$};
	\node(a12) at ($(a02)+(ly)$) {$M$};
	\node(a13) at ($(a03)+(ly)$) {$0$};

	\draw[->] (a00)-- node[above] {$\comp - \id\tp \eps_A$} (a01);
	\draw[->] (a01)-- node[above] {$\pi_M$} (a02);
	\draw[->] (a02)-- node[above] {} (a03);

	\draw[<-] (a00)-- node[left] {$\cong$} (a10);
	\draw[<-] (a01)-- node[left] {$\cong$} (a11);
	\draw[<-] (a02)-- node[left] {$=$} (a12);
	
	\draw[->] (a10)-- node[below] {$( \id_M\tp \comp) \circ \Phi - \id \tp \eps_A$} (a11);
	\draw[->] (a11)-- node[below] {$  \id \tp\eps_A$} (a12);
	\draw[->] (a12)-- node[below] {} (a13);

\end{tikzpicture}
\end{equation}
which will prove the exactness of the upper row and thus our lemma. 

We define the middle vertical arrow as 
$$ M\tp A\to \srdiecko M  :\; m\tp a \mapsto a \tp m. $$
Using the formulas for $\eps_A$ and $\pi_M$ from Lemma \ref{lem_kopec_formul} we see that the right window of (\ref{diag_slnko}) commutes.

The last thing is to define the left vertical isomorphism that would make the left window commute.  Recall from Corollary \ref{cor_product_formulas} that the $A$-module structure $\srdiecko M \tp A \xto\comp \srdiecko M$ on $\srdiecko M$ is 
$$ ( a \tp m ) \comp b \; = \; \Big( \kappa^1 \cdot a \cdot S\kappa^2 \cdot \alpha \cdot \kappa^3 \cdot b \cdot S\kappa^4 \Big) \tp \big( \kappa^5 \rhd m \big) $$
where $\kappa$ is\footnoteremember{kappalambda}{Explicitly: $\kappa = (1\tp \phi \tp 1) \cdot \Phi_{(1,5),2,(3,4)} $ and $\lambda = (1\tp 1 \tp \phi) \cdot \Phi_{2,3,(4,5)} \cdot \Phi_{1,(2,3),(4,5)} $.} an invertible element of $H^{\tp 5} $. Again using Corollary \ref{cor_product_formulas}, the $A$-module structure $(M\tp A) \tp A \xto{ (\id_M \tp \comp ) \circ \Phi} M \tp A $ on $M\tp A$ is
$$ (m\tp a) \tp b \; \mapsto \; \big(\lambda^1 \rhd m \big) \tp \Big( \lambda^2 \cdot a \cdot S\lambda^3 \cdot \alpha \cdot \lambda^4 \cdot b \cdot S\lambda^5 \Big) $$
for an invertible\footnoterecall{kappalambda} $\lambda\in H^{\tp 5}$. Thus if we define the left vertical arrow $( M \tp A ) \tp A \to  \srdiecko M \tp A$ in (\ref{diag_slnko}) by (we denote $\bar\kappa := \kappa^{-1}$)
$$ (m \tp a) \tp b \mapsto \Big[ \bar\kappa^1 \cdot \lambda^2 \cdot a \cdot S( \bar\kappa^2 \cdot \lambda^3) \Big] \tp \Big[ \bar\kappa^5 \cdot \lambda^1 \rhd m \Big] \tp \Big[ \bar\kappa^3 \cdot \lambda^4 \cdot b \cdot S( \bar\kappa^4 \cdot \lambda^5) \Big] $$
then the left square in (\ref{diag_slnko1}) will commute.
\begin{equation}\label{diag_slnko1}
\begin{tikzpicture}[label distance =-0.5 mm, baseline=-1 cm]	
	\node(lx1) at (4,0){};
	\node(lx2) at (2.5,0){};
	\node(lx3) at (1.5,0){};
	\node(lx4) at (3.1,0){};
	\node(ly) at (0, -2){};
	\pgfresetboundingbox;

	\node(a00) {$\srdiecko M \tp A$};
	\node(a01) at ($(a00)+(lx1)$) {$\srdiecko M$};
	
	\node(a10) at ($(a00)+(ly)$) {$(M \tp A) \tp A$};
	\node(a11) at ($(a01)+(ly)$) {$ M \tp A$};

	\draw[->] (a00)-- node[above] {$\comp $} (a01);

	\draw[<-] (a00)-- node[left] {$\cong$} (a10);
	\draw[<-] (a01)-- node[left] {$\cong$} (a11);
	
	\draw[->] (a10)-- node[below] {$( \id_M\tp \comp) \circ \Phi $} (a11);

\end{tikzpicture}
\quad
\begin{tikzpicture}[label distance =-0.5 mm, baseline=-1 cm]	
	\node(lx1) at (3.3,0){};
	\node(lx2) at (2.5,0){};
	\node(lx3) at (1.5,0){};
	\node(lx4) at (3.1,0){};
	\node(ly) at (0, -2){};
	\pgfresetboundingbox;

	\node(a00) {$\srdiecko M \tp A$};
	\node(a01) at ($(a00)+(lx1)$) {$\srdiecko M$};
	
	\node(a10) at ($(a00)+(ly)$) {$(M \tp A) \tp A$};
	\node(a11) at ($(a01)+(ly)$) {$ M \tp A$};

	\draw[->] (a00)-- node[above] {$ \id\tp \eps_A$} (a01);

	\draw[<-] (a00)-- node[left] {$\cong$} (a10);
	\draw[<-] (a01)-- node[left] {$\cong$} (a11);
	
	\draw[->] (a10)-- node[below] {$ \id \tp \eps_A$} (a11);

\end{tikzpicture}
\end{equation}
To prove that the right square in (\ref{diag_slnko1}) also commutes just use the formula for $\eps_A$ from Lemma \ref{lem_kopec_formul} and\footnote{To show these two equations, apply the claim \ref{claim_eps_on_Phi} to the explicit formulas\footnoterecall{kappalambda} for $\kappa$ and $\lambda$.}
\begin{gather*}
\kappa^1 \tp \kappa^2 \tp \eps\kappa^3 \tp \eps\kappa^4 \tp \kappa^5 = 1_H \tp 1_H \tp 1_H \\
\lambda^1 \tp \lambda^2 \tp \lambda^3 \tp \eps\lambda^4 \tp \eps\lambda^5  = 1_H \tp 1_H \tp 1_H.
\end{gather*}
\end{proof}


\section{Isomorphism $\srdiecko \circ \budzogan \; \cong \; \id_\catA$}\label{section_XiDzeta}
In this section, we assume $\catH = \LM H$ for a quasi-Hopf algebra $H$ and we fix $M \in \catA$. We denote by 
$$ \ams :M\tp A \to M \quad\text{and} \quad p_M: M \to \budzogan M $$
the right $A$-module structure on $M$ and the natural projection. Recall also isomorphisms
 $$ s_M:\srdiecko M \xto\cong M \tp A \quad \quad t_M : M \tp A \xto\cong \srdiecko M$$
defined by formulas (\ref{equa_sM}) and (\ref{equa_tM}).
\subsection{Definition of $\xi:\; \srdiecko\circ \budzogan \to \id_\catA$} 
\begin{claim}
$ \Xi_M :\; \srdiecko M \xto{s_M} M\tp A \xto\ams M $ is in $\catA$.
\end{claim}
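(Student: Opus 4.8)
The plan is to show that the composite $\Xi_M = \ams \circ s_M$ is a morphism in $\catA$, i.e. that it is right $A$-linear and compatible with the $\catD$-structures (the $H$-action being automatic since all three maps $s_M$, $\ams$ and the structure morphisms live in $\catH$). Since $s_M:\srdiecko M\to M\tp A$ is known from Lemma on $\srdiecko M\cong M\tp A$ to be an isomorphism in $\catA$ (right $A$-linear and in $\catD$), and $\ams:M\tp A\to M$ is right $A$-linear by the module axiom and a $\catD$-morphism because $M\in\catA$ means its $A$-action is a $\catD$-morphism, the claim ought to follow by just composing these two facts. So the first thing I would do is recall precisely the two statements: (i) $s_M$ is a $\catA$-morphism from $\srdiecko M$ (with its canonical right $A$-action coming from $\comp$, via $\srdiecko M\cong M\tp A$ the free module) to the free module $M\tp A$; and (ii) the right action $\ams:M\tp A\to M$ on an object $M\in\catA$ is the counit of the free–forgetful adjunction and is therefore right $A$-linear and a $\catD$-morphism (it is the structure map of $M$ as an $A$-module in $\catD$, precomposed with the free module's action, which is again $\comp$).

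Concretely, I would verify right $A$-linearity by the diagram
\begin{align*}
\srdiecko M \tp A \xrightarrow{s_M\tp\id_A} (M\tp A)\tp A \xrightarrow{\ams\tp\id_A} M\tp A
\end{align*}
and compare it with
\begin{align*}
\srdiecko M\tp A \xrightarrow{\comp} \srdiecko M \xrightarrow{s_M} M\tp A \xrightarrow{\ams} M.
\end{align*}
The left square (that $s_M$ intertwines $\comp$ on $\srdiecko M$ with the free right action on $M\tp A$) is part of the cited lemma; the right triangle (that $\ams$ coequalizes the two free right actions, i.e. associativity of the $A$-action on $M$) is the module axiom for $M\in\catA$. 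For the $\catD$-compatibility I would draw the braiding $\beta_{\srdiecko M, T}$, push it through $s_M$ using the fact that $s_M$ is in $\catD$, then through $\ams$ using that the $A$-action on $M$ is a $\catD$-morphism (condition analogous to (\ref{C3}) in the Hopf case); at each stage I replace the braiding of the source by that of the target, which is exactly what being a $\catD$-morphism means. Since both $s_M$ and $\ams$ are in $\catD$, so is their composite.

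I expect the main (and only) obstacle to be purely bookkeeping: making sure the right $A$-action on $\srdiecko M$ that appears in the definition of $\catA$ — and that $\ams$ must be compatible with — is indeed the one given by $\comp$ and transported by $s_M$ to the standard free-module action on $M\tp A$, rather than, say, the $\sipka$-action or the left action. Once the correct actions are identified, no real computation is needed: the claim is the composition of two maps each already known (from Lemma \ref{lem_ajhfka} and the lemma on $\srdiecko M\cong M\tp A$, together with the definition of $\catA$) to be in $\catA$. If one wants the explicit check in the $H$-module picture, I would use the formula $(a\tp m)\comp b = (\kappa^1 a\,S\kappa^2\,\alpha\,\kappa^3\, b\, S\kappa^4)\tp(\kappa^5\rhd m)$ from Corollary \ref{cor_product_formulas} together with the formula for $s_M$ from (\ref{equa_sM}), and verify $\Xi_M((a\tp m)\comp b) = \Xi_M(a\tp m)\cdot b$ directly; but I would present the diagrammatic argument as the main proof and relegate this to a remark.
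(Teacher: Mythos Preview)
Your proposal is correct and takes essentially the same approach as the paper: factor $\Xi_M$ as a composite of two maps each already known to be in $\catA$, namely $s_M$ (from the lemma establishing $\srdiecko M\cong M\tp A$ in $\catA$ for $M\in\catD$) and $\ams$ (which is $A$-linear by associativity and a $\catD$-morphism by the definition of $M\in\catA$). The paper's own proof is a single sentence observing that both maps are $A$-linear once $M\tp A$ carries the free-module structure; your version spells out the $\catD$-compatibility as well and flags the bookkeeping about which $A$-action on $\srdiecko M$ is in play, but the underlying argument is identical.
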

\begin{proof}
Both maps are $A$-linear if the $A$-module structure on $M\tp A$ is given just by multiplication on $A$.
\end{proof} 
By Lemma \ref{lem_explicit_srdiecko}, $\srdiecko$ is right-exact and thus exactness of (sequence in $\catH$)
$$ M\tp A \xto{ \ams - (\id_M \tp \eps_A)} M  \xto{p_M} \budzogan M \to 0 $$
implies exactness of (sequence in $\catA$)
\begin{equation}\label{seq1}
 \srdiecko( M\tp A )\xto{ \srdiecko\ams - \srdiecko(\id_M \tp \eps_A)}  \srdiecko M  \xto{\srdiecko p_M} \srdiecko\budzogan M \to 0 
\end{equation}
\begin{claim}
$\Xi_M$ uniquely factorizes as $ \srdiecko M \xto{\srdiecko p_M} {\srdiecko \budzogan M } \xto{\xi_M} M$ with $\xi_M\in \catA$.
\end{claim}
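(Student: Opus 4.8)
The plan is to deduce the factorization from the right-exactness of $\srdiecko$ together with the universal property of the cokernel. By the exact sequence \eqref{seq1}, the object $\srdiecko\budzogan M$ is the cokernel (in $\catA$) of $\srdiecko\ams - \srdiecko(\id_M\tp\eps_A)$, so a morphism $\xi_M:\srdiecko\budzogan M\to M$ with $\xi_M\circ\srdiecko p_M=\Xi_M$ exists and is unique as soon as I check that $\Xi_M$ kills $\srdiecko\ams - \srdiecko(\id_M\tp\eps_A)$, i.e. that
$$
\Xi_M\circ\srdiecko\ams \;=\; \Xi_M\circ\srdiecko(\id_M\tp\eps_A)\colon\ \srdiecko(M\tp A)\to M.
$$
Uniqueness is then automatic, and since $\srdiecko p_M$ is epi in $\catA$ (being a cokernel map) the factoring morphism is unique; that $\xi_M\in\catA$ is likewise automatic because cokernels in $\catA$ are computed there.

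So the real content is the displayed identity. First I would rewrite both sides using $\Xi_M=\ams\circ s_M$ and the fact (established in the definition of $s_M$, and in Remark~\ref{rem_jalaa}) that $\srdiecko$ applied to a morphism $f:M\tp A\to M$ interacts with the maps $s_{(-)}$ in the expected way, namely $s_M\circ\srdiecko f=(f\tp\id_A)\circ s_{M\tp A}$ up to the associativity constraint; this reduces the two composites to genuine computations with the multiplication $\ams$, the augmentation $\eps_A$, and the product $\comp$ on $A$. The left-hand side becomes "first multiply the two $A$-factors of $M\tp A$, then act on $M$ by $\ams$", while the right-hand side becomes "apply $\eps_A$ to the outer $A$-factor, then act by $\ams$". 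These agree precisely because $\ams$ is a right $A$-module structure: $\ams\circ(\ams\tp\id_A)=\ams\circ(\id_M\tp\comp)$ and $\ams\circ(\id_M\tp\eps_A)$ restricted appropriately is the unit axiom. I expect this to be a short string-diagram manipulation once the bookkeeping between $s_M$, $s_{M\tp A}$ and $\srdiecko$ of a morphism is set up correctly; concretely one uses the defining equation \eqref{equa_sM} for $s$ twice and the module axioms for $\ams$.

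The main obstacle is purely the compatibility of $s_{(-)}$ with $\srdiecko$ of a morphism $M\tp A\to M$: unlike for a general object, here the target is $M$ and the source carries an $A$-factor, so one must be careful that the braiding $\beta_{\srdiecko,\,-}$ implicit in $s$ is not disturbed — this is exactly the kind of subtlety flagged in the footnote to \eqref{equa_fkjds} ("this would not hold if $N$ crossed under $A$"). Once that naturality square for $s$ is in hand, the rest is the module axiom and the cokernel universal property, both routine.
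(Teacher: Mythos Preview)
Your overall plan matches the paper's: use exactness of (\ref{seq1}) to reduce to the identity $\Xi_M\circ\srdiecko\ams=\Xi_M\circ\srdiecko(\id_M\tp\eps_A)$, then conjugate by the isomorphisms $s_{(-)}$ to compare maps out of $(M\tp A)\tp A$. The gap is in how you handle the second composite.

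The naturality square $s_M\circ\srdiecko f=(f\tp\id_A)\circ s_{M\tp A}$ that you invoke is valid for $f=\ams$, because $\ams$ is a $\catD$-morphism (this is part of what it means for $M$ to lie in $\catA$); the paper uses exactly this to get diagram (\ref{diag_hsj1}). But it \emph{fails} for $f=\id_M\tp\eps_A$: the augmentation $\eps_A:A\to I$ is only an $\catH$-map, not a $\catD$-map, so $s$ is not natural with respect to it. The paper flags this explicitly in the footnote to (\ref{diag_hsj2}). Your last paragraph senses there is a braiding subtlety here, but treats it as a cosmetic associator correction; in fact the bottom arrow changes entirely. The correct statement is that under $s$, the map $\srdiecko(\id_M\tp\eps_A)$ corresponds to $(\id_M\tp\mu_A)\circ\Phi$ --- the \emph{multiplication} of the two $A$'s, not an application of $\eps_A$. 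This is the one nontrivial computation in the proof; the paper carries it out by letting both candidate composites act via $\Diamond$ on a test object $T$ and invoking Observation~\ref{obs_sipka_a_epsA}.

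Once (\ref{diag_hsj2}) is established with that bottom arrow, the two paths from $(M\tp A)\tp A$ to $M$ are $\ams\circ(\ams\tp\id_A)$ and $\ams\circ(\id_M\tp\mu_A)\circ\Phi$, which coincide by the \emph{associativity} axiom for the right $A$-module $M$ --- not by any unit axiom. Your proposed comparison (``multiply the two $A$'s, then act'' versus ``apply $\eps_A$ to the outer $A$, then act'') is not an instance of the module axioms and is false in general; it only looks plausible because you have already (implicitly) used associativity on the left-hand side while mis-identifying the right-hand side.
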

\begin{proof}
By exactness of (\ref{seq1}), it is enough to verify that 
$$ \Big( \srdiecko(M\tp A) \xto{\srdiecko \ams } \srdiecko M \xto{\Xi_M} M \Big)\; \xlongequal{}\; \Big(\srdiecko(M\tp A) \xto{\srdiecko( \id_M\tp \eps_A) } \srdiecko M \xto{\Xi_M} M \Big). $$
This equality becomes obvious once we insert the two sides into the following commutative diagrams (and then use the definition of an $A$-module):
\begin{equation}\label{diag_hsj1}
\begin{tikzpicture}[label distance =-0.5 mm, baseline=-1 cm]	
	\node(lx1) at (5,0){};
	\node(lx2) at (2.5,0){};
	\node(ly) at (0, -2){};
	\pgfresetboundingbox;

	\node(a00) {$\srdiecko (M \tp A)$};
	\node(a01) at ($(a00)+(lx1)$) {$\srdiecko M$};
	\node(a02) at ($(a01)+(lx2)$) {$M$};
	
	\node(a10) at ($(a00)+(ly)$) {$(M \tp A) \tp A$};
	\node(a11) at ($(a01)+(ly)$) {$ M \tp A$};

	\draw[->] (a00)-- node[above] {$\srdiecko \ams$} (a01);
	\draw[->] (a01)-- node[above] {$\Xi_M$} (a02);
	
	\draw[->] (a00)-- node[left] {$s_{M\tp A}$} node[right] {$\cong$} (a10);
	\draw[->] (a01)-- node[right] {$\cong$} node[left] {$s_M$} (a11);
	\draw[<-] (a02)-- node[right, below] {$\ams$} (a11);
	
	\draw[->] (a10)-- node[below] {$\ams \tp \id_A$} (a11);
	
\end{tikzpicture}
\end{equation}

\begin{equation}\label{diag_hsj2}
\begin{tikzpicture}[label distance =-0.5 mm, baseline=-1 cm]	
	\node(lx1) at (5,0){};
	\node(lx2) at (2.5,0){};
	\node(ly) at (0, -2){};
	\pgfresetboundingbox;

	\node(a00) {$\srdiecko (M \tp A)$};
	\node(a01) at ($(a00)+(lx1)$) {$\srdiecko M$};
	\node(a02) at ($(a01)+(lx2)$) {$M$};
	
	\node(a10) at ($(a00)+(ly)$) {$(M \tp A) \tp A$};
	\node(a11) at ($(a01)+(ly)$) {$ M \tp A$};

	\draw[->] (a00)-- node[above] {$\srdiecko(\id_M \tp \eps_A)$} (a01);
	\draw[->] (a01)-- node[above] {$\Xi_M$} (a02);
	
	\draw[->] (a00)-- node[left] {$s_{M\tp A}$} node[right] {$\cong$} (a10);
	\draw[->] (a01)-- node[right] {$\cong$} node[left] {$s_M$} (a11);
	\draw[<-] (a02)-- node[right, below] {$\ams$} (a11);
	
	\draw[->] (a10)-- node[below] {$(\id_M \tp \mu_A) \circ \Phi$} (a11);
	
\end{tikzpicture}
\end{equation}
The two triangles commute by definition of $\Xi_M$. The square in (\ref{diag_hsj1}) commutes by naturality of $s$. 

The last thing to show is the commutativity of the square in (\ref{diag_hsj2}).\footnote{The naturality of $s$ does not apply here because $\id_M\tp\eps_A$ is not in $\catD$.} To show the equality of the two paths, we let $A$ act by $\sipka$ on some $T\in\catH$. The upper-right path gives\footnote{The first equality is the definition of $s_M$ and the second one is the naturality of $\Diamond_{M,T}:\srdiecko M \tp T \to T \tp M$ in $M$.}
$$ \input{\cesta1/3daco4} =  \input{\cesta1/3daco3} = \input{\cesta1/3daco2} $$
and the left-bottom path is
$$ \input{\cesta1/3daco5} = \input{\cesta1/3daco6} = \input{\cesta1/3daco1} $$
Using the observation \ref{obs_sipka_a_epsA} we see that they are equal.

\end{proof}

\subsection{Definition of $\zeta:\;  \id_\catA \to \srdiecko\circ \budzogan $} Take the composition of $\catD$-morphisms
$$  \zeta_M : M \xto{\id_M \tp 1_A } M \tp A \xto\cong \srdiecko M \xto{\srdiecko p_M} \srdiecko \budzogan M. $$
When we prove that it is the inverse of $\xi_M$, $A$-linearity will follow automatically.

\subsection{Proof that $\xi_M \circ \zeta_M = \id_M$} In the diagram
$$
\begin{tikzpicture}[label distance =-0.5 mm, baseline=-1 cm]	
	\node(lx) at (3,0){};
	\node(ly1) at (0, -1.2){};
	\node(ly2) at (0, -1.5){};
	\pgfresetboundingbox;

	\node(a00) {$M$};
	\node(a10) at ($(a00)+(lx)$) {$M \tp A$};
	\node(a20) at ($(a10)+(lx)$) {$ M$};

	\node(a11) at ($(a10)+(ly1)$) {$ \srdiecko M$};
	\node(a12) at ($(a11)+(ly2)$) {$ \srdiecko \budzogan M$};

	\draw[->] (a00)-- node[above] {$\id_M \tp 1_A $} (a10);
	\draw[->] (a10)-- node[above] {$\ams $} (a20);

	\draw[->] (a00)-- node[left] {$\zeta_M $} (a12);
	\draw[<-] (a20)-- node[right] {$\xi_M $} (a12);

	\draw[<->] (a10)-- node[right] {$\cong $} (a11);
	\draw[->] (a11)-- node[right] {$\srdiecko p_M $} (a12);

\end{tikzpicture}
$$
the two triangles are commutative --- they correspond to definitions of $\zeta_M$ and $\xi_M$. The upper path $M \to M$ is $\id_M$ so the lower path must be it as well.

\subsection{Proof that $\zeta_M \circ \xi_M = \id_{\srdiecko \budzogan M}$} We precompose both sides with the epimorphism
\begin{equation}\label{map_adf}
\begin{tikzpicture}[ baseline=0 cm]	
	\node(ly) at (2,0){};
	\pgfresetboundingbox;

	\node(a00) {$M\tp A$};
	
	\node(a01) at ($(a00)+(ly)$) {$\srdiecko M$};
	
	\node(a02) at ($(a00)+2*(ly)$) {$\srdiecko \budzogan M$};

	\draw[->] (a00)-- node[above] {\scriptsize $t_M$}  node[below]{\scriptsize $\cong$} (a01);
	\draw[->>] (a01)-- node[above] {\scriptsize $\srdiecko p_M$}(a02);
	
\end{tikzpicture}
\end{equation}
The left-hand side becomes
\begin{equation}\label{map_adf1}
M\tp A \xto{\;t_M\;} \srdiecko M \xto{\;\srdiecko p_M\;} \srdiecko \budzogan M \xto{\;\xi_M\;} M \xto{\; \zeta_M \;} \srdiecko \budzogan M
\end{equation}
and we now have to prove that (\ref{map_adf1}) equals (\ref{map_adf}). From the commutative diagram\footnote{Commutativity of the middle triangle is obvious and the other two are just definitions of $\xi_M$ and $\zeta_M$.}
$$
\begin{tikzpicture}[label distance =-0.5 mm, baseline=-1 cm]	
	\node(lx) at (3.5,0){};
	\node(ly) at (0, 2){};
	\pgfresetboundingbox;

	\node(a00) {$M\tp A$};
	\node(a20) at ($(a00)+2*(lx)$) {$M \tp A$};

	\node(a01) at ($(a00)+(ly)$) {$\srdiecko M$};
	\node(a21) at ($(a01)+2*(lx)$) {$\srdiecko M$};

	\node(a02) at ($(a00)+2*(ly)$) {$\srdiecko \budzogan M$};
	\node(a12) at ($(a02)+(lx)$) {$ M$};
	\node(a22) at ($(a02)+2*(lx)$) {$\srdiecko \budzogan M$};

	\draw[->] (a02)-- node[above] {$\xi_M$} (a12);
	\draw[->] (a12)-- node[above] {$\zeta_M$} (a22);
	
	\draw[->] (a00)-- node[below] {$\ams \tp 1_A$} (a20);

	\draw[->] (a00)-- node[left] {$t_M$}  node[right]{$\cong$} (a01);
	\draw[->>] (a01)-- node[left] {$\srdiecko p_M$}(a02);
	
	\draw[->] (a00)-- node[left] {$\mu_M$}  (a12);
	\draw[->] (a12)-- node[left] {$\id_M \tp 1_A$}(a20);
	
	\draw[->] (a20)-- node[right] {$t_M$}  node[left]{$\cong$} (a21);
	\draw[->>] (a21)-- node[right] {$\srdiecko p_M$}(a22);
	
\end{tikzpicture}
$$
we see that (\ref{map_adf1}) is equal to  
\begin{equation}\label{map_adf2}
M\tp A \xto{\ams \tp 1_A} M\tp A \xto{\;t_M\;} \srdiecko M \xto{\;\srdiecko p_M\;} \srdiecko \budzogan M 
\end{equation}
So finally, the equation to prove is (\ref{map_adf}) = (\ref{map_adf2}). As usual, we let both sides act by $\Diamond_{\budzogan M, T}$ on some $T\in \catH$. The left-hand side becomes\footnote{The first equality is from naturality of $\Diamond_{M,T}$ in $M$ and the second one is the definition of $t_M$.}
$$\input{\cesta1/2daco1}= \input{\cesta1/2daco4} = \input{\cesta1/2daco5} $$
Using this, the right-hand side (that is just the left-hand side precomposed with $\ams \tp 1_A$) becomes\footnote{The first equality holds because the right $A$-module structure on $M$ is a $\catD$-morphism. The second one is from
$$ \input{\cesta1/1daco3} = \input{\cesta1/1daco1}$$
which is easy to see from the definitions of $\budzogan$ and $p_M$.} 
$$ \input{\cesta1/1daco4} = \input{\cesta1/1daco5} = \input{\cesta1/1daco6} $$
The two are the same by the observation \ref{obs_sipka_a_epsA}.

\appendix


\section{$\catH=\LM H$ is a closed monoidal category}\label{section_H_is_closed}
We want to define an inner hom bifunctor 
$$\innH:\catH^{op}\times\catH\to\catH\,.$$
If $\catH$ was just the category of finite dimensional (over $\field$) modules of $H$, it would be rigid, we could take $\innH(M,N):=N\otimes\ld{M}$ and using string diagrams it would be easy to show that it works. In the general case we take instead the vector space: 
$$\innH(M,N):=Lin(M,N)$$
with the action of $H$:
\begin{equation}\label{innH_as_module}
	(h\rhd f)(\argument)=h\jedna\rhd f(Sh\dva\rhd\argument)
\end{equation}
To see $\innH$ as a bifunctor, we define\forme{\footnote{Let's remark that this is not so obvious as it seems. To see that it really corresponds in our analogy to $$(g,h)\mapsto \big(N\otimes\ld{M}\xto{h\otimes\ld{g}} N'\otimes\ld(M')\big)$$ we need to use the lemma \ref{lem_dual_of_morphisms}.}} for $g\in\catH(M',M)$ and $h\in\catH(N,N')$
$$\innH(g,h):\innH(M,N)\to\innH(M',N'): f\mapsto h\circ f\circ g\;.$$

For $\innH$ to be an internal hom we need $\innH(P,\argument)$ to be left adjoint to $\argument\otimes P$ for any fixed parameter $P\in ob(\catH)$, that is:
$$\catH(M\otimes P, N)\cong\catH(M,\innH(P,N))\,.$$
A usual approach is to find natural transformations:
\begin{itemize}
\item $\eta_M:M\to\innH(P,M\otimes P)$ corresponding to\\ $\id\in\catH(M\otimes P, M\otimes P)\cong\catH\big(M,\innH(P,M\otimes P)\big)\ni \eta$ 
\item $\eps_M:\catH(P,N)\otimes P\to N$ corresponding to\\ \quad $\id\in\catH\big(\innH(P,N),\innH(P,N)\big)\cong\catH\big(\innH(P,N)\otimes P,N\big)\in \eps$
\end{itemize}
Transcribing the string diagrams we get the formulas
\begin{equation}\label{def_eta}
	\eta(m)(\argument)=(\phi^1\rhd m)\otimes(\phi^2\cdot\beta\cdot S\phi^3\rhd\argument)
\end{equation}
\begin{equation}\label{def_eps}
	\eps(f\otimes p)=\Phi^1\rhd f(S\Phi^2\cdot\alpha\cdot\Phi^3\rhd p)\;.
\end{equation}
To be sure they really define an adjunction we need to show that
\begin{itemize}
\item $\eps$ and $\eta$ are $H$-linear;
\item the composition
\begin{equation*}
	M\otimes P\xto{\eta_M\otimes \id_P}\innH(P,M\otimes P)\otimes P\xto{\eps_{M\otimes P}}M\otimes P
\end{equation*}
is equal to $\id_{M\otimes P}$ and that
\item the composition
\begin{equation*} 
	\innH(P,M)\xto{\eta_{\innH(P,M)}}\innH(P,\innH(P,M)\otimes P)\xto{\innH(\id,\eps)}\innH(P,M)
\end{equation*}
is equal to $\id_{\innH(P,N)}$.
\end{itemize}

\subsection{Linearity of $\eta_M:M\to\innH(P,M\otimes P).$}
$$\eta(h\rhd m)(\argument)\xlongequal{(\ref{def_eta})}(\phi^1\cdot h\rhd m)\otimes(\phi^2\cdot\beta\cdot S\phi^3\rhd\argument)$$
\begin{align*}
\big(h\rhd\eta(m)\big)(\argument)& \xlongequal{(\ref{innH_as_module})} h\jedna\rhd\big(\eta(m)(h\dva\rhd\argument)\big)=\\
&\xlongequal{(\ref{def_eta})}h\jedna\rhd\big((\phi^1\rhd m)\otimes(\phi^2\cdot\beta\cdot S\phi^3\cdot h\dva\rhd\argument)\big)= \\
&=h_{(1)(1)}\cdot\phi^1\rhd m\otimes h_{(1)(2)}\cdot\phi^2\cdot\beta\cdot S\phi^3\cdot Sh\dva\rhd\argument=\\
&\xlongequal{\ref{B1}}\phi^1\cdot h\jedna\rhd m\otimes\phi^2\cdot h_{(2)(1)}\cdot\beta\cdot Sh\dvadva\cdot Sh^3\rhd\argument=\\
&\xlongequal{\ref{H2}+\ref{B3}} \phi^1\cdot h\rhd m\otimes\phi^2\cdot\beta\cdot S\phi^3\rhd\argument
\end{align*}

\subsection{Linearity of $\eps_N:\innH(P,N)\otimes P\to N$.}
\begin{align*}
\eps_N\big(h\rhd(f\otimes p)\big)&\xlongequal{(\ref{innH_as_module})} \eps_N\big(h\jednajedna\rhd f(Sh\jednadva\rhd\argument)\otimes h\dva\rhd p\big)=\\
&\xlongequal{(\ref{def_eps})}\Phi^1\cdot h\jednajedna\rhd f(Sh\jednadva\cdot S\Phi^2\cdot\alpha\cdot\Phi^3\cdot h\dva\rhd p)=\\
&\xlongequal{(\ref{B1})} h\jedna\cdot\Phi^1\rhd f(S\Phi^2\cdot Sh\dvajedna\cdot\alpha\cdot h\dvadva\cdot \Phi^3\rhd p)=\\
&\xlongequal{(\ref{H1})+(\ref{B3})}h\cdot\Phi^1\rhd f(S\Phi^2\cdot\alpha\cdot\Phi^3\rhd p)=\\
&\xlongequal{(\ref{def_eps})} h\rhd\eps_N(f\otimes p)
\end{align*}

\subsection{Composition $M\otimes P\xto{\eta_M\otimes \id_P}\innH(P,M\otimes P)\otimes P\xto{\eps_{M\otimes P}}M\otimes P$}\noindent \\
\begin{equation} \label{pomocna_formula1}
	\phii^1\otimes\phii^2_{(1)}\cdot\beta\cdot S\phii^2_{(2)}\otimes\phii^3=1\otimes\beta\otimes 1
\end{equation}
\begin{equation}\label{pomocna_formula2}
	\Phii^1\otimes\Phii^2\otimes S\Phii^3_{(1)}\cdot\alpha\cdot\Phii^3_{(2)}=1\otimes 1\otimes\alpha
\end{equation}
\begin{equation}\label{pomocna_formula3}
(\Phii^1\otimes\Phii^2\otimes\Phii^3_{(1)}\otimes\Phii^3_{(2)})\cdot 
(\Phi^1_{(1)}\otimes\Phi^1_{(2)}\otimes\Phi^2\otimes\Phi^3)\cdot(\phi^1\otimes\phi^2\otimes\phi^3\otimes 1)\cdot(\phii^1\otimes\phii^2_{(1)}\otimes\phii^2_{(2)}\otimes\phii^3)=1\otimes\Phi^1\otimes\Phi^2\otimes\Phi^3
\end{equation}
\begin{tabular}{lcl}
$m\otimes p$ &$\xmapsto{\eta_H\otimes P}$& $\big((\phi^1\rhd m)\otimes(\phi^2\cdot\beta\cdot S\phi^3\rhd\argument)\big)\otimes p$\\
& $\xmapsto{\eps_{M\otimes P}}$ & $\Phi^1\rhd\big((\phi^1\rhd m)\otimes(\phi^2\cdot\beta\cdot S\phi^3\cdot S\Phi^2\cdot\alpha\cdot\Phi^3\rhd p)\big)=$\\
\end{tabular}\noindent\\
$=(\Phi^1_{(1)}\cdot\phi^1\rhd m)\otimes(\Phi^1_{(2)}\cdot\phi^2\cdot\beta\cdot S\phi^3\cdot S\Phi^2\cdot\alpha\cdot\Phi^3\rhd p)=^ {(\ref{pomocna_formula1})}$\\
$=(\Phi^1_{(1)}\cdot\phi^1\cdot\phii^1\rhd m)\otimes(\Phi^1_{(2)}\cdot\phi^2\cdot\phii^2_{(1)}\beta\cdot S\phii^2_{(2)}\cdot S\phi^3\cdot S\Phi^2\cdot\alpha\cdot\Phi^3\cdot\phii^3\rhd p)=^ {(\ref{pomocna_formula2})}$\\
$=(\Phii^1\cdot\Phi^1_{(1)}\cdot\phi^1\cdot\phii^1\rhd m)\otimes(\Phii^2\cdot\Phi^1_{(2)}\cdot\phi^2\cdot\phii^2_{(1)}\beta\cdot S\phii^2_{(2)}\cdot S\phi^3\cdot S\Phi^2\cdot S\Phii^3_{(1)}\cdot\alpha\cdot\Phii^3_{(2)}\cdot\Phi^3\cdot\phii^3\rhd p)=^ {(\ref{pomocna_formula3})}$\\
$=m\otimes\Phi^1\cdot\beta\cdot S\Phi^2\cdot\alpha\cdot\Phi^3\rhd p=^{(\ref{H3})}$\\
$=m\otimes p$

\subsection{Composition $\innH(P,M)\xto{\eta_{\innH(P,M)}}\innH(P,\innH(P,M)\otimes P)\xto{\innH(\id,\eps)}\innH(P,M)$}\noindent\\
\begin{equation}\label{tmp1}
	\phii^1\otimes\phii^2\otimes\phii^3_{(1)}\cdot\beta\cdot\phii^3_{(2)}=1\otimes 1\otimes\beta
\end{equation}
\begin{equation}\label{tmp2}
	\Phii^1\otimes S\Phii^2_{(1)}\cdot\alpha\cdot\Phii^2_{(2)}\otimes S\Phii^3=1\otimes\alpha\otimes 1
\end{equation}
\begin{equation}\label{tmp3}
	(\Phii^1\otimes\Phii^2_{(1)}\otimes\Phii^2_{(2)}\otimes\Phii^3)\cdot(\Phi^1\otimes\Phi^2\otimes\Phi^3\otimes 1)\cdot(\phi^1\jedna\otimes\phi^1\dva\otimes\phi^2\otimes\phi^3)\cdot(\phii^1\otimes\phii^2\otimes\phii^3\jedna\otimes\phii^3\dva)=1\otimes\phi^1\otimes\phi^2\otimes\phi^3
\end{equation}
\begin{tabular}{lcl}
$f$&$\xmapsto{\eta_{\innH(P,M)}}$&$(\phi^1\rhd f)\otimes\phi^2\cdot\beta\cdot S\phi^3\rhd\argument=$\\
&&$=\big(\phi^1_{(1)}\rhd f(S\phi^1_{(2)}\rhd\widehat\argument)\big)\otimes\big(\phi^2\cdot\beta\cdot S\phi^3\rhd\argument\big)$\\
&$\xmapsto{\innH(id_P,\eps)}$&$\Phi^1\cdot\phi^1_{(1)}\rhd f(S\phi^1_{(2)}\cdot S\Phi^2\cdot\alpha\cdot\Phi^3\cdot\phi^2\cdot\beta\cdot S\phi^3\rhd\argument)=^{(\ref{tmp1})}$
\end{tabular}\noindent\\
$=\Phi^1\cdot\phi^1_{(1)}\cdot\phii^1\rhd f(S\phii^2\cdot S\phi^1_{(2)}\cdot S\Phi^2\cdot\alpha\cdot\Phi^3\cdot\phi^2\cdot\phii^3_{(1)}\cdot\beta\cdot S\phii^3_{(2)}\cdot S\phi^3\rhd\argument)=^{(\ref{tmp2})}$
$=\Phii^1\cdot\Phi^1\cdot\phi^1_{(1)}\cdot\phii^1\rhd f(S\phii^2\cdot S\phi^1_{(2)}\cdot S\Phi^2\cdot S\Phii^2_{(1)}\cdot\alpha\cdot\Phii^2_{(2)}\cdot\Phi^3\cdot\phi^2\cdot\phii^3_{(1)}\cdot\beta\cdot S\phii^3_{(2)}\cdot S\phi^3\cdot S\Phii^3\rhd\argument)=^{(\ref{tmp3})}$
$=f(S\phi^1\cdot\alpha\cdot\phi^2\cdot\beta\cdot S\phi^3\rhd\argument)=^{(\ref{H4})}$
$=f(\argument)$


\section{Explicit Madness}
In this section, $\catH$ will be the category of representations of a quasi-Hopf algebra $H$.  We do the proofs first in the rigid subcategory $\catHfd$ of finite dimensional $H$-modules because they are much simpler and they motivate the formulas. 

\begin{lem}
The natural map $M\tp \innH( X, Y ) \to \innH( X, M\tp Y )$ is given by
\begin{equation}\label{equa_dnu}
 m\tp f \;\; \mapsto \;\; (\phi^1 \rhd m ) \tp \Big( \phi^2 \rhd f(S\phi^3 \rhd \argument ) \Big). 
\end{equation}
\end{lem}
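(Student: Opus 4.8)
The plan is to establish \eqref{equa_dnu} first in the rigid subcategory $\catHfd$, where the map in question is forced to be an associativity constraint, and then to obtain the general case by a direct substitution in the spirit of the linearity computations of Appendix \ref{section_H_is_closed}.

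For $\catHfd$: by Proposition \ref{prop_strict_category} we may identify $\innH(X,Y)=Y\tp\ld X$ (indeed the action $(h\rhd f)(\argument)=h\jedna\rhd f(Sh\dva\rhd\argument)$ is exactly the tensor-product action on $Y\tp\ld X$, once one recalls that $h\rhd\xi=\xi(Sh\rhd\argument)$ on $\ld X$). Under this identification $M\tp\innH(X,Y)=M\tp(Y\tp\ld X)$ and $\innH(X,M\tp Y)=(M\tp Y)\tp\ld X$, and the canonical morphism between them --- being the adjunct of $(\id_M\tp\eeps_{Y,X})\circ a_{M,\innH(X,Y),X}$ --- is, by the pentagon together with the triangle identities, just the inverse associativity constraint $a^{-1}_{M,Y,\ld X}$. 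In $\LM H$ this is the action of $\Phi^{-1}$, i.e.\ $m\tp(y\tp\xi)\mapsto(\phi^1\rhd m)\tp\big((\phi^2\rhd y)\tp(\phi^3\rhd\xi)\big)$; translating back from $Y\tp\ld X$ to $Lin(X,Y)$, the factor $\phi^3\rhd\xi$ turns into $f(S\phi^3\rhd\argument)$ and $\phi^2\rhd y$ into $\phi^2$ acting on the value of $f$, which is precisely \eqref{equa_dnu}. This step also explains the shape of the formula --- why $S$ is applied to $\phi^3$ but not to $\phi^1,\phi^2$.

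For general $\catH=\LM H$ I would take \eqref{equa_dnu} as a candidate and check that it agrees with the canonical map. That map equals $\innH\big(\id_X,\,(\id_M\tp\eeps_{Y,X})\circ a_{M,\innH(X,Y),X}\big)\circ\eeta_{M\tp\innH(X,Y),X}$; substituting the explicit formulas \eqref{def_eeta} and \eqref{def_eeps} and the action of $\Phi$ for the associator, one computes this composite and simplifies. In the simplification the $\beta$ produced by $\eeta$ and the $\alpha$ produced by $\eeps$ must be absorbed using \eqref{H3} (equivalently \eqref{H4}), the reassociations using the pentagon \eqref{B2}, and the counit normalizations using \eqref{B3}--\eqref{B4}; what is left over is exactly \eqref{equa_dnu}. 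One must separately record that the right-hand side of \eqref{equa_dnu} is $H$-linear, which is a short check of the same type as the linearity of $\eeta_M$ carried out in Appendix \ref{section_H_is_closed}, using \eqref{B1} and \eqref{B3}.

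The main obstacle is organisational rather than conceptual: keeping the two associativity constraints pointing in the correct directions, tracking which of the several copies of $\Phi$ versus $\Phi^{-1}$ lands in which tensor slot, and --- since $f$ is only a linear map --- treating the element of $H$ that is fed into $f$ as a single entity that may not be redistributed across the tensor legs. Once the intermediate equation is written correctly as an identity in $H^{\otimes 3}$, collapsing it to $(\id\tp\id\tp S)(\Phi^{-1})$ is a routine application of the axioms.
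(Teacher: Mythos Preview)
Your treatment of the rigid case $\catHfd$ is essentially identical to the paper's: both identify $\innH(X,Y)$ with $Y\tp\ld X$ and recognise the map as the associativity constraint $M\tp(Y\tp\ld X)\to(M\tp Y)\tp\ld X$, which is the action of $\Phi^{-1}$.

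For the general case you take a different route. The paper does not compute the canonical map via $\eeta$; instead it characterises $\rho$ by the single commutative square
\[
\eeps_{M\tp Y,X}\circ(\rho\tp\id_X)\;=\;(\id_M\tp\eeps_{Y,X})\circ\Phi
\]
and then verifies that the candidate \eqref{equa_dnu} satisfies it, comparing the two sides using one auxiliary identity (the paper's \eqref{pomocna_formula2}) and the pentagon. Your approach --- writing the map as $\innH\big(\id_X,(\id_M\tp\eeps_{Y,X})\circ a\big)\circ\eeta$ and simplifying --- is correct and will reach the same formula, but it forces both $\beta$ (from $\eeta$) and $\alpha$ (from $\eeps$) into the computation, which then have to be eliminated via \eqref{H3}/\eqref{H4}. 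The paper's verification-via-$\eeps$ avoids $\eeta$ and $\beta$ entirely: only $\alpha$ appears, and it appears symmetrically on both sides, so no antipode axiom is needed and the whole comparison reduces to a pentagon identity in $H^{\tp4}$. Your separate $H$-linearity check is also unnecessary in the paper's framework, since once the candidate satisfies the defining square it coincides with the canonical (hence $H$-linear) map.
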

\begin{proof}
In the rigid $\catHfd$, our map is just the associativity morphism $$M\tp (Y \tp\ld X) \xto\phi (M \tp Y) \tp \ld X$$ which gives precisely the formula above.

Consider the general case and denote our map by $\rho$. Then it is uniquely determined by the commutativity of
\begin{equation}\label{diag_asdfj}
\begin{tikzpicture}[label distance =-0.5 mm, baseline=-1 cm]	
	\node(lx1) at (5.5,0){};
	\node(lx2) at (2.5,0){};
	\node(lx3) at (1.5,0){};
	\node(lx4) at (3.1,0){};
	\node(ly) at (0, -2){};
	\pgfresetboundingbox;

	\node(a00) {$ \Big( M \tp \innH ( X,Y) \Big) \tp X$};
	\node(a01) at ($(a00)+(lx1)$) {$ M \tp \Big( \innH ( X, Y ) \tp X \Big)$};
	
	\node(a10) at ($(a00)+(ly)$) {$ \innH \big( X , M\tp Y \big) \tp X$};
	\node(a11) at ($(a01)+(ly)$) {$ M \tp Y$};

	\draw[->] (a00)-- node[above] {$\Phi $} (a01);

	\draw[->] (a00)-- node[left] {$\rho \tp \id_X $} (a10);
	\draw[->] (a01)-- node[right] {$\id_M \tp \eeps_{Y,X} $} (a11);
	
	\draw[->] (a10)-- node[below] {$ \eeps_{ M \tp Y, X} $} (a11);

\end{tikzpicture}
\end{equation}
So we just need to plug $\rho$ defined by (\ref{equa_dnu}) into (\ref{diag_asdfj}) and verify that it commutes. The upper-right path is 
\begin{align*}
& m \tp f \tp x \xmapsto{ \Phi}  \Big(\Phi^1 \rhd m \Big) \tp \Big( \Phi^2_{(1)} \rhd f\big( S \Phi^2_{(2)} \rhd \argument \big) \Big) \tp \Big( \Phi^3 \rhd x \Big) \mapsto \\
& \xmapsto{\id_M \tp \eeps_{Y,X} } \Big( \Phi^1 \rhd m \Big) \tp \bigg( \Phii^1 \cdot \Phi^2_{(1)} \rhd f \Big[ S( \Phii^2\cdot \Phi^2_{(2)} ) \cdot   \alpha \cdot \Phii^3 \cdot \Phi^3 \rhd x \Big] \bigg) 
\end{align*}
and  the left-lower path gives
\begin{align*}
& m \tp f \tp x \xmapsto{ \rho \tp \id_X} \rho(m\tp f ) \tp x    \xmapsto{ \eeps_{M \tp Y, X} }   \Phi^1 \rhd \Big[ \rho( m \tp f )\big( S\Phi^2 \cdot \alpha \cdot \Phi^3 \rhd x \big) \Big] = \\
& \quad\quad \xlongequal{1}\Big( \Phi^1_{(1)} \cdot \phi^1 \rhd m \Big) \tp \bigg( \Phi^1_{(2)} \cdot \phi^2 \rhd f\Big( S( \Phi^2 \cdot \phi^3 ) \cdot \alpha \cdot \Phi^3 \rhd x \Big) \bigg)= \\ 
&  \xlongequal{2}\Big( \Phii^1 \cdot \Phi^1_{(1)} \cdot \phi^1 \rhd m \Big) \tp \bigg( \Phii^2\cdot \Phi^1_{(2)} \cdot \phi^2 \rhd f\Big( S( \Phii^3_{(1)} \cdot \Phi^2 \cdot \phi^3 ) \cdot \alpha \cdot \Phii^3_{(2)} \cdot \Phi^3 \rhd x \Big) \bigg) \\
\end{align*}
where in the first equality we replaced $\rho$ by (\ref{equa_dnu}) and in the second equality we used (\ref{pomocna_formula2}). To show that the results produced by the two paths are the same one just uses the pentagon in the form
\begin{align*}
& \Phi^1  \tp \big( \Phii^1 \cdot \Phi^2_{(1)} \big) \tp \big( \Phii^2\cdot \Phi^2_{(2)} \big) \tp \big( \Phii^3 \cdot \Phi^3  \big) = \\
& \quad\quad = \big( \Phii^1 \cdot \Phi^1_{(1)} \cdot \phi^1  \big) \tp \big( \Phii^2\cdot \Phi^1_{(2)} \cdot \phi^2 \big) \tp \big( \Phii^3_{(1)} \cdot \Phi^2 \cdot \phi^3  \big) \tp \big( \Phii^3_{(2)} \cdot \Phi^3  \big).
\end{align*}
\end{proof}

\begin{cor}\label{cor_skjfss}
If we identify $\int_{X \in \catH}   \innH\big( X , N\tp (X\tp M)\big) $ with $N\tp H \tp M$ and $\srdiecko M = \int_{X \in \catH} \innH( X, X\tp M) $ with $ H\tp M$ as in Lemma \ref{lem_explicit_srdiecko} then the natural map
$$ N \tp \srdiecko M \to  \innH\big( X , N\tp (X\tp M)\big) $$
is $ n\tp a \tp m \mapsto (\phi^1 \rhd n ) \tp ( \phi^2_{(1)} \cdot a \cdot S \phi^3) \tp (\phi^2_{(2)} \rhd m )$.

In particular, it is an isomorphism.
\end{cor}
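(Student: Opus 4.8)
The statement to prove is Corollary \ref{cor_skjfss}: identify $\int_{X}\innH(X, N\tp(X\tp M))$ with $N\tp H\tp M$ and $\srdiecko M$ with $H\tp M$ via Lemma \ref{lem_explicit_srdiecko}, and then compute the natural comparison map $N\tp\srdiecko M\to\innH(X,N\tp(X\tp M))$ explicitly, concluding that it is an isomorphism. The natural map in question is the end-level incarnation of the canonical morphism $N\tp\innH(X,Y)\to\innH(X,N\tp Y)$ (with $Y=X\tp M$), whose formula $m\tp f\mapsto(\phi^1\rhd m)\tp(\phi^2\rhd f(S\phi^3\rhd\argument))$ was just established in the preceding Lemma. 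So the first step is to recall that $\srdiecko M$, as an end, sits inside $\innH(C,C\tp M)=\mathrm{Lin}(H,H\tp M)$ as the maps $\ell_{a\tp m}:h\mapsto(a\cdot h)\tp m$, i.e. $H\tp M\cong\srdiecko M$ via $a\tp m\mapsto\ell_{a\tp m}$; and likewise $N\tp H\tp M\cong\int_X\innH(X,N\tp(X\tp M))$ via $n\tp a\tp m\mapsto\ell_{n\tp a\tp m}$ where $\ell_{n\tp a\tp m}:h\mapsto n\tp(a\cdot h)\tp m$.

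Next, I would trace the comparison map through these identifications. Take $n\tp(a\tp m)\in N\tp\srdiecko M$; under $\srdiecko M\cong H\tp M$ this is $n\tp\ell_{a\tp m}$. Apply the natural transformation $\rho:N\tp\innH(C,C\tp M)\to\innH(C,N\tp(C\tp M))$ of the previous Lemma with $X=C$ the left regular module and $Y=C\tp M$: by (\ref{equa_dnu}) this sends $n\tp\ell_{a\tp m}$ to the linear map $h\mapsto(\phi^1\rhd n)\tp\big[\phi^2\rhd\ell_{a\tp m}(S\phi^3\rhd h)\big]$. Now evaluate $\ell_{a\tp m}$ on $S\phi^3\rhd h=S\phi^3\cdot h$: this gives $(a\cdot S\phi^3\cdot h)\tp m\in C\tp M$, and then the remaining action of $\phi^2$ on $C\tp M=H\tp M$ splits as $\phi^2_{(1)}$ acting on the $H$-slot and $\phi^2_{(2)}$ on $M$. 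So the image is the linear map $h\mapsto(\phi^1\rhd n)\tp\big(\phi^2_{(1)}\cdot a\cdot S\phi^3\cdot h\big)\tp(\phi^2_{(2)}\rhd m)$, which is exactly $\ell_{(\phi^1\rhd n)\tp(\phi^2_{(1)}\cdot a\cdot S\phi^3)\tp(\phi^2_{(2)}\rhd m)}$. Reading this back through $N\tp H\tp M\cong\int_X\innH(X,N\tp(X\tp M))$ yields the claimed formula $n\tp a\tp m\mapsto(\phi^1\rhd n)\tp(\phi^2_{(1)}\cdot a\cdot S\phi^3)\tp(\phi^2_{(2)}\rhd m)$.

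Finally, invertibility: the map $H\tp M\to H\tp M$, $a\tp m\mapsto a\cdot S\phi^3\tp(\phi^2_{(2)}\rhd m)$ twisted by $\phi^2_{(1)}$ on the left, and the $N$-slot twist $n\mapsto\phi^1\rhd n$, are all built from the invertible element $\Phi^{-1}=\phi^1\tp\phi^2\tp\phi^3$; explicitly one writes down the inverse using $\Phi$ and the antipode axioms (\ref{H1})--(\ref{H4}), e.g. the inverse sends $n\tp b\tp m$ to $(\Phi^1\rhd n)\tp(\Phi^2_{(1)}\cdot b\cdot\cdots)\tp(\cdots\rhd m)$ after inserting $\alpha,\beta$ appropriately — the same bookkeeping already performed in Appendix \ref{section_H_is_closed} (formulas (\ref{pomocna_formula1})--(\ref{pomocna_formula3})). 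Alternatively, and more cleanly, one notes that on a finite-dimensional module $X$ the map $N\tp\innH(X,Y)\to\innH(X,N\tp Y)$ is the associativity isomorphism $N\tp(Y\tp\ld X)\to(N\tp Y)\tp\ld X$ hence iso, and the end-level map is assembled from these; since the identifications of both ends with the stated vector spaces are isomorphisms by Lemma \ref{lem_explicit_srdiecko}, the comparison map is an isomorphism of vector spaces, and its formula is the one computed above.

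\textbf{Main obstacle.} The only real subtlety is keeping the coassociator bookkeeping straight: one must correctly split the $\phi^2$-action across the tensor factor $C\tp M$ and make sure the identification of the two ends (via $C$ generating the relevant bifunctors, Lemma \ref{lem_explicit_srdiecko}) is compatible with the naturality square that defines $\rho$. Once the formula is in hand, invertibility is immediate because every twist appearing is by a component of the invertible element $\Phi^{\pm1}$ together with the (invertible) antipode, so no genuine difficulty remains there.
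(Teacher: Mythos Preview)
Your proposal is correct and follows essentially the same computation as the paper: apply formula (\ref{equa_dnu}) at $X=C$ to $n\tp\ell_{a\tp m}$, evaluate $\ell_{a\tp m}(S\phi^3\cdot h)=(a\cdot S\phi^3\cdot h)\tp m$, split the $\phi^2$-action across $C\tp M$, and read off the element of $N\tp H\tp M$. The paper's proof stops once the formula is displayed, leaving invertibility implicit (it is visibly built from the invertible $\phi$ and the antipode); your additional remarks on invertibility are fine but unnecessary.
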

\begin{proof}
An element $ n\tp l_{a,m} \in N \tp \srdiecko M \subset N \tp \innH(C, C\tp M) $ is sent to an element of $\innH\big(C, N\tp (C\tp M) \big) $ (i.e.  map $ C \to N\tp C \tp M $) given by 
\begin{align*}
c\mapsto\quad & \big(\phi^1 \rhd n \big) \tp \Big( \phi^2 \rhd l_{a,m} ( S\phi^3 \rhd c ) \Big) = \\
= & \big( \phi^1 \rhd n \big) \tp \Big( \phi^2 \rhd \big[ ( a \cdot S\phi^3 \cdot c ) \tp  m \big] \Big) = \\
= & \big( \phi^1 \rhd n \big) \tp  \big( \phi^2_{(1)} \cdot a \cdot S\phi^3 \ \cdot c \big) \tp \big( \phi^2_{(2)} \rhd m \big) = \\
=& l_{t}  (c). 
\end{align*}
where 
$ t = \big( \phi^1 \rhd n \big) \tp  \big( \phi^2_{(1)} \cdot a \cdot S\phi^3  \big) \tp \big( \phi^2_{(2)} \rhd m \big) \;\in \;  N \tp H \tp M $.
\end{proof}
%

\begin{lem}
The ``inner composition'' $\innH(Y,Z) \otimes \innH(X, Y) \xto{\comp} \innH( X, Z)$ is 
\begin{align} \label{equa_comp} 
g \otimes f \mapsto \Phi^1 \rhd g\bigg[ S( \phi^1\cdot \Phi^2 ) \cdot \alpha \cdot \phi^2 \cdot \Phi^3_{(1)} \rhd f \Big( S( \phi^3 \cdot \Phi^3_{(2)} ) \rhd \argument \Big) \bigg]
\end{align}
\end{lem}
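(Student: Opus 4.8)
The plan is to read off $\comp$ directly from its definition via the counit $\eeps$ of the closed structure, feeding in the already-computed formula (\ref{equa_dnu}) as the main input. Recall that the inner composition $\comp$ is, as usual, the composite
$$\innH(Y,Z)\tp\innH(X,Y)\xto{\ \rho\ }\innH\big(X,\innH(Y,Z)\tp Y\big)\xto{\ \innH(\id_X,\eeps_{Z,Y})\ }\innH(X,Z),$$
where $\rho: M\tp\innH(X,Y)\to\innH(X,M\tp Y)$ (with $M=\innH(Y,Z)$) is the natural map whose formula is (\ref{equa_dnu}); equivalently, $\comp$ is the unique $\catH$-morphism with $\eeps_{Z,X}\circ(\comp\tp\id_X)=\eeps_{Z,Y}\circ(\id\tp\eeps_{Y,X})\circ\Phi$, where $\Phi$ is the associator $(\innH(Y,Z)\tp\innH(X,Y))\tp X\to\innH(Y,Z)\tp(\innH(X,Y)\tp X)$. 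Either description turns the lemma into a finite, if lengthy, computation.

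First I would dispose of the rigid subcategory $\catHfd$, where $\innH(A,B)=B\tp\ld A$ and $\comp$ is nothing but the obvious composite $(Z\tp\ld Y)\tp(Y\tp\ld X)\to Z\tp\ld X$ built from the evaluation $\ld{ev}$ of $Y$ and two reassociations. Substituting the explicit associator (the action of $\Phi$), the $H$-action $a\rhd\psi=\psi(Sa\rhd\argument)$ on duals, and $\ld{ev}(\psi\tp y)=\psi(\alpha\rhd y)$ from Proposition~\ref{prop_strict_category}, and transporting along $Z\tp\ld Y\cong Lin(Y,Z)$, one reads off exactly formula (\ref{equa_comp}). This proves the statement on a dense class of objects and, more usefully, tells us the shape the general answer must have.

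For a general quasi-Hopf algebra I would plug (\ref{equa_dnu}) into the composite presentation of $\comp$ above. This gives at once
$$\comp(g\tp f)(\argument)=\eeps_{Z,Y}\Big((\phi^1\rhd g)\tp\big(\phi^2\rhd f(S\phi^3\rhd\argument)\big)\Big),$$
after which one expands $\eeps_{Z,Y}$ by (\ref{def_eeps}) and the $\innH$-action inside $\phi^1\rhd g$ by (\ref{innH_as_module}), obtaining an expression of the form $P_1\rhd g\big(P_2\rhd f(P_3\rhd\argument)\big)$ with the $P_i$ words in $\Phi^{\pm1}$, $\alpha$ and $S$. Recognising this as the right-hand side of (\ref{equa_comp}) amounts to an identity in a tensor power of $H$ relating the two collections of $\Phi$- and $\Phi^{-1}$-factors, which follows from quasi-coassociativity (\ref{B1}), the normalisation (\ref{B4}), and the pentagon (\ref{B2}) in the ``unpacked'' form already met in this appendix, cf.~(\ref{pomocna_formula3}) (and, if convenient, the antipode relations (\ref{H1})--(\ref{H4})). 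As an independent check one verifies that the right-hand side of (\ref{equa_comp}) is $H$-linear in $g\tp f$ and satisfies $\eeps_{Z,X}\circ(\comp\tp\id_X)=\eeps_{Z,Y}\circ(\id\tp\eeps_{Y,X})\circ\Phi$, by exactly the kind of manipulation with (\ref{B1})--(\ref{B4}) and the antipode axioms that appears in the linearity proofs for $\eta$ and $\eps$ in Appendix~\ref{section_H_is_closed}.

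I expect the only genuine obstacle to be this last reconciliation: the three reassociations conspire to produce a sizeable tangle of $\Phi^{\pm1}$'s, and the real work is to arrange the pentagon applications so that everything collapses to the single pair of $\Phi$-factors visible in (\ref{equa_comp}) --- the same ``explicit madness'' as elsewhere in this appendix. By contrast the rigid-case computation and the $H$-linearity verification are routine, and the former already fixes the target formula, so the general case is verification rather than discovery.
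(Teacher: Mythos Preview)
Your proposal is correct, and you have in fact sketched \emph{two} routes, the second of which (your ``independent check'') is exactly what the paper does. The paper does not compute $\comp$ from the composite $\innH(\id,\eeps_{Z,Y})\circ\rho$ at all; after the rigid-case derivation it simply takes the formula (\ref{equa_comp}) as given and verifies the characterising identity $\eeps_{Z,X}\circ(\comp\tp\id_X)=\eeps_{Z,Y}\circ(\id\tp\eeps_{Y,X})\circ\Phi$ by expanding both sides and matching them via a pentagon rearrangement (phrased as two rebracketing paths in $H^{\tp 5}$).

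Your primary route is a genuine, if small, variation: plugging (\ref{equa_dnu}) and (\ref{def_eeps}) into $\innH(\id,\eeps_{Z,Y})\circ\rho$ yields
\[
(g\comp f)(x)=\Phi^1\phi^1_{(1)}\rhd g\Big(S(\Phi^2\phi^1_{(2)})\cdot\alpha\cdot\Phi^3\phi^2\rhd f(S\phi^3\rhd x)\Big),
\]
and the reconciliation with (\ref{equa_comp}) is then the observation that the two underlying elements of $H^{\tp4}$ differ exactly by left multiplication with $\Phi_{1,(2,3),4}$ (this is the pentagon), after which (\ref{H1}) collapses $S\Phi^2_{(1)}\cdot\alpha\cdot\Phi^2_{(2)}$ to $\eps(\Phi^2)\alpha$ and Claim~\ref{claim_eps_on_Phi} removes the remaining $\Phi$. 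This is a clean, self-contained computation and arguably more direct than the paper's verification, since you actually \emph{derive} the formula rather than guess-and-check; the paper's approach, on the other hand, avoids having two expressions to reconcile and makes the role of coherence more visible. Either way the ``explicit madness'' is of the same order.
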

\begin{proof}
In the rigid category $\catHfd$ our $\comp$ is just the composition 
\begin{align*}
(Z \tp \ld Y ) \tp ( Y \tp \ld X) &\xto{ \Phi_{1,2,(3,4)} } Z \tp \Big( \ld Y \tp ( Y\tp \ld X ) \Big)  \to \\
& \xto{ \id \tp \phi } Z \tp \Big( ( \ld Y \tp Y ) \tp \ld X \Big) \xto{ \id \tp ( ev \tp \id )} Z \tp \ld X
\end{align*}
\begin{align*}
g\tp f &\xmapsto{ \Phi_{1,2,(3,4)} } \bigg[ \Phi^1 \rhd g \big( S \Phi^2 \rhd \argument \big) \bigg] \otimes \bigg[\Phi^3_{(1)} \rhd f \big( S\Phi^3_{(2)} \rhd \argument \big) \bigg] \mapsto \\
&\xmapsto{\id \tp\phi } \bigg[ \Phi^1 \rhd g \Big( S(\phi^1 \cdot \Phi^2) \rhd \argument \Big) \bigg] \otimes \bigg[\phi^2 \cdot \Phi^3_{(1)} \rhd f \Big( S( \phi^3 \cdot \Phi^3_{(2)})  \rhd \argument \Big) \bigg] \mapsto \\
& \xmapsto{ \id \tp ( ev \tp \id )}  \Phi^1 \rhd g\bigg[ S( \phi^1\cdot \Phi^2 ) \cdot \alpha \cdot \phi^2 \cdot \Phi^3_{(1)} \rhd f \Big( S( \phi^3 \cdot \Phi^3_{(2)} ) \rhd \argument \Big) \bigg]
\end{align*}

For a general closed monoidal category, $\comp$ is uniquely determined by commutativity of
$$ 
\begin{tikzpicture}[baseline=-3cm]	
	\node(lx) at (2,0){};
	\node(ly) at (0, 2){};
	\pgfresetboundingbox;

	\node(a01) {$\Big( \innH( Y, Z ) \tp \innH( X, Y ) \Big) \tp X $};

	\node(a10) at ($(a01)+(ly)+0.5*(lx)$) {$ \innH( Y, Z ) \tp \Big( \innH( X, Y )  \tp X \Big) $};

	\node(a20) at ($(a01)+(ly)+3.5*(lx)$) {$ \innH( Y, Z )  \tp Y $};

	\node(a31) at ($(a01)+4*(lx)$) {$ Z $};

	\node(a12) at ($(a01)-(ly)+2*(lx)$) {$ \innH( X, Z )  \tp X $};

	\draw[->] (a01)-- node[left] {$\Phi$} (a10);
	\draw[->] (a10)-- node[above] {$\id \tp \eeps_{Y,X}$} (a20);
	\draw[->] (a20)-- node[right] {$\eeps_{Z,Y}$} (a31);
	\draw[->] (a01)-- node[left] {$\comp \tp\id_X$} (a12);
	\draw[->] (a12)-- node[right] {$\quad \eeps_{Z,X}$} (a31);
\end{tikzpicture}
$$
So to prove that $\comp$ is given by the formula (\ref{equa_comp}) we need just to plug it into the diagram.
The upper path gives
\begin{align*}
& ( g \tp f )\tp x \xmapsto{\Phi} \Big( \Phi^1_{(1)} \rhd g( S\Phi^1_{(2)}\rhd \argument ) \Big)\tp \Big( \Phi^2_{(1)} \rhd f( S \Phi^2_{(2)} \rhd \argument ) \Big) \tp \Big( \Phi^3 \rhd x \Big) \\
& \xmapsto{ \id \tp \eeps_{Y,X} } \Big( \Phi^1_{(1)} \rhd g( S \Phi^1_{(2)} \rhd \argument ) \Big) \tp \bigg( S \Phii^1 \cdot \Phi^2_{(1)} \rhd f \Big( S( \Phii^2 \cdot \Phi^2_{(2)} ) \cdot \alpha \cdot \Phii^3 \cdot \Phi^3 \rhd x \Big) \bigg) \\
& \xmapsto{ \eeps_{Z,Y} } \Phiii^1 \cdot \Phi^1_{(1)} \rhd g \bigg( S ( \Phiii^2 \cdot \Phi^1_{(2)} ) \cdot \alpha \cdot \Phiii^3 \cdot \Phii^1 \cdot \Phi^2_{(1)} \rhd f \Big( S ( \Phii^2 \cdot \Phi^2_{(2)} ) \cdot \alpha \cdot \Phii^3 \cdot \Phi^3 \rhd x \Big) \bigg) \\
& = \Phiii^1 \cdot \phi^1_{(1)} \cdot \Phi^1_{(1)} \rhd g \bigg( S ( \Phiii^2 \cdot \phi^1_{(2)} \cdot \Phi^1_{(2)} ) \cdot \alpha \cdot \Phiii^3 \cdot \phi^2 \cdot \Phii^1 \cdot \Phi^2_{(1)} \rhd \\
& \quad\quad \quad\quad\quad\quad\quad\quad \quad\quad\quad\quad\rhd f \Big( S ( \phi^3_{(1)}\cdot \Phii^2 \cdot \Phi^2_{(2)} ) \cdot \alpha \cdot \phi^3_{(2)} \cdot \Phii^3 \cdot \Phi^3 \rhd x \Big) \bigg), \\
\end{align*}
where in the last equality we used
$$ 1 \tp 1 \tp 1 \tp \alpha = \phi^1_{(1)} \tp \phi^1_{(2)} \tp \phi^2 \tp\big( S\phi^3_{(1)} \cdot \alpha \cdot \phi^3_{(2)} \big). $$
The lower path is
\begin{align*}
& (g\tp f) \tp x \xmapsto{ \comp \tp \id_X } \bigg( \Phi^1 \rhd g \Big[ S ( \phi^1 \cdot \Phi^2 ) \cdot \alpha \cdot \phi^2 \cdot \Phi^3_{(1)} \rhd f \Big( S( \phi^3 \cdot \Phi^3_{(2)} ) \rhd \argument \Big) \Big] \bigg) \tp x \\ 
& \xmapsto{\eeps_{Z,X} } \Phii^1 \cdot \Phi^1 \rhd g \bigg[ S(\phi^1 \cdot \Phi^2)\cdot \alpha \cdot \phi^2 \cdot \Phi^3_{(1)} \rhd f \Big( S( \Phii^2 \cdot \phi^3 \cdot \Phi^3_{(2)} ) \cdot \alpha \cdot \Phii^3 \rhd x \Big) \bigg] = \\
& \xlongequal{(\ref{equa_hsko})} \Phii^1 \cdot \phii^1 \cdot \Phi^1 \rhd g \bigg[ S(\phii^2_{(1)} \cdot \phi^1 \cdot \Phi^2)\cdot \alpha \cdot \phii^2_{(2)} \cdot \phi^2 \cdot \Phi^3_{(1)} \rhd \\ 
& \quad\quad \quad\quad\quad\quad\quad\quad \quad\quad\quad\quad\rhd f \Big( S( \Phii^2 \cdot \phii^3 \cdot \phi^3 \cdot \Phi^3_{(2)} ) \cdot \alpha \cdot \Phii^3 \rhd x \Big) \bigg], \\
& \xlongequal{(\ref{equa_hskoo})} \Phii^1_{(1)} \cdot \phii^1 \cdot \Phi^1 \rhd g \bigg[ S(\Phii^1_{(2)(1)} \cdot \phii^2_{(1)} \cdot \phi^1 \cdot \Phi^2)\cdot \alpha \cdot \Phii^1_{(2)(2)}\cdot \phii^2_{(2)} \cdot \phi^2 \cdot \Phi^3_{(1)} \rhd \\ 
& \quad\quad \quad\quad\quad\quad\quad\quad \quad\quad\quad\quad\rhd f \Big( S( \Phii^2 \cdot \phii^3 \cdot \phi^3 \cdot \Phi^3_{(2)} ) \cdot \alpha \cdot \Phii^3 \rhd x \Big) \bigg], \\
\end{align*}
where the two equalities come from
\begin{equation}\label{equa_hsko}
 1 \tp \alpha \tp 1 =  \phii^1 \tp \big( S\phii^2_{(1)} \cdot \alpha \cdot \phii^2_{(2)} \big) \tp \phii^3 
\end{equation}
\begin{equation}\label{equa_hskoo}
\Phii^1 \tp \alpha \tp \Phii^2 \tp \Phii^3  =  \Phii^1_{(1)} \tp \Big( S\Phii^1_{(2)(1)} \cdot \alpha \cdot \Phii^1_{(2)(2)} \Big) \tp \Phii^2 \tp \Phii^3 .
\end{equation}
Now it is enough to see that in $H^{\tp 5}$
\begin{align*}
& (\Phiii^1 \cdot \phi^1_{(1)} \cdot \Phi^1_{(1)} ) \tp  ( \Phiii^2 \cdot \phi^1_{(2)} \cdot \Phi^1_{(2)} ) \tp  ( \Phiii^3 \cdot \phi^2 \cdot \Phii^1 \cdot \Phi^2_{(1)}) \tp \\
& \quad\quad \quad\quad\quad\quad\quad\quad \quad\quad\quad\quad \tp ( \phi^3_{(1)}\cdot \Phii^2 \cdot \Phi^2_{(2)} ) \tp ( \phi^3_{(2)} \cdot \Phii^3 \cdot \Phi^3  ) =  \\
& =( \Phii^1_{(1)} \cdot \phii^1 \cdot \Phi^1 ) \tp (\Phii^1_{(2)(1)} \cdot \phii^2_{(1)} \cdot \phi^1 \cdot \Phi^2) \tp ( \Phii^1_{(2)(2)}\cdot \phii^2_{(2)} \cdot \phi^2 \cdot \Phi^3_{(1)}) \tp \\ 
& \quad\quad \quad\quad\quad\quad\quad\quad \quad\quad\quad\quad\tp ( \Phii^2 \cdot \phii^3 \cdot \phi^3 \cdot \Phi^3_{(2)} ) \tp ( \Phii^3 ) \\
\\
\end{align*}
The left-hand side corresponds to the rebracketing
\begin{align*}
\big( (\bodka \bodka)  (\bodka \bodka) \big) \bodka \xto{\Phi} (\bodka \bodka ) \big((\bodka \bodka ) \bodka \big) \xto{\Phii} (\bodka \bodka ) \big( \bodka (\bodka \bodka ) \big) \xto{\phi} \big( ( \bodka \bodka )\bodka \big) ( \bodka \bodka ) \xto{\Phiii} \big( \bodka (\bodka \bodka ) \big) ( \bodka\bodka )
\end{align*}
and the right-hand side to
\begin{align*}
\big( (\bodka \bodka)  (\bodka \bodka) \big) \bodka \xto{\Phi} \Big(\bodka \big( \bodka(\bodka\bodka) \big) \Big) \bodka \xto{\phi} \Big( \bodka \big( (\bodka\bodka) \bodka\big) \Big) \bodka \xto{\phii} \Big( \big( \bodka( \bodka\bodka) \big) \bodka\Big) \bodka \xto{\Phii}  \big( \bodka (\bodka \bodka ) \big) ( \bodka\bodka ) 
\end{align*}
so they are equal by the coherence theorem of monoidal categories.
\end{proof}

\begin{cor}\label{cor_product_formulas}
If we identify $A$ with $H$ and $\srdiecko M$ with $H\tp M$ via linear maps
$$ H \xto\cong A: a \mapsto l_a;\quad \quad H\tp M \xto\cong \srdiecko M: a\tp m \mapsto l_{a\tp m} $$
then the formulas for the product in $A$ and the right $A$-module structure on $\srdiecko M$ are
$$a \comp b =  \Phi^1 \cdot a \cdot S ( \phi^1 \cdot \Phi^2 )\cdot \alpha \cdot \phi^2 \cdot \Phi^3_{(1)} \cdot b \cdot S( \phi^3 \cdot \Phi^3_{(2)} )  $$ 
 $$(a\tp m) \comp b = \bigg( \Phi^1_{(1)} \cdot a \cdot S ( \phi^1 \cdot \Phi^2 )\cdot \alpha \cdot \phi^2 \cdot \Phi^3_{(1)} \cdot b \cdot S( \phi^3 \cdot \Phi^3_{(2)} ) \bigg) \tp \Big( \Phi^1_{(2)} \rhd m \Big) $$  
\end{cor}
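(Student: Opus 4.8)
The plan is to reduce both formulas to the explicit description of the \emph{inner} composition $\comp$ obtained in the preceding lemma, formula~(\ref{equa_comp}), by observing that --- after the identifications of Lemma~\ref{lem_explicit_srdiecko} --- the two products $\comp$ appearing in the statement are nothing but (restrictions of) inner composition evaluated at the generator $C$ of $\innH$.

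For the product on $A$: by the construction in the proof of Lemma~\ref{lemA}, each $\alpha_X\colon A\to\innH(X,X)$ is an algebra homomorphism for the inner composition, and by Lemma~\ref{lem_explicit_srdiecko} the map $\alpha_C$ is precisely the inclusion $a\mapsto l_a$ of $\int_{X}\innH(X,X)$, identified with $\{l_a : a\in H\}$, into $Lin(H,H)=\innH(C,C)$. Hence $l_{a\comp b}=l_a\comp l_b$, the right-hand side being computed by (\ref{equa_comp}) with $X=Y=Z=C$. Since the $H$-action on $C=H$ is left multiplication and $l_a$, $l_b$ are left multiplications by $a$, $b$, substituting $g=l_a$, $f=l_b$ into (\ref{equa_comp}) and unwinding from the inside out gives the asserted $a\comp b=\Phi^1\cdot a\cdot S(\phi^1\cdot\Phi^2)\cdot\alpha\cdot\phi^2\cdot\Phi^3_{(1)}\cdot b\cdot S(\phi^3\cdot\Phi^3_{(2)})$.

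For the right $A$-action on $\srdiecko M$: by Remark~\ref{rem_jalaa} it is the composite $\srdiecko M\tp A=\srdiecko M\tp\srdiecko I\xto{\comp}\srdiecko(M\tp I)=\srdiecko M$. The extranatural family defining this $\comp$, specialised to the relevant data $N=I$ and $Y=Z$, is just the inner composition $\innH(Y,Y\tp M)\tp\innH(X,Y)\to\innH(X,Y\tp M)$ (the factor $\id_I$ and the unit reassociations $(Y\tp M)\tp I\cong Y\tp M$, $Y\tp I\cong Y$ are canonical and introduce no $\Phi$, using (\ref{B4})). Exactly as above, the component at $C$ of $\srdiecko M$ is the inclusion $a\tp m\mapsto l_{a\tp m}$, with $l_{a\tp m}(h)=(a\cdot h)\tp m$, of $\int_{X}\innH(X,X\tp M)$ into $\innH(C,C\tp M)=Lin(H,H\tp M)$, so $l_{(a\tp m)\comp b}=l_{a\tp m}\comp l_b$ with the right-hand side given by (\ref{equa_comp}) for $Y=X=C$, $Z=C\tp M$. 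Every inner occurrence of $\rhd$ is still left multiplication on $C=H$, whereas the outermost action now sees $C\tp M$, where $\Phi^1\rhd(y\tp m)=(\Phi^1_{(1)}\cdot y)\tp(\Phi^1_{(2)}\rhd m)$; unwinding (\ref{equa_comp}) from the inside out then produces the second claimed formula, the factor $\Phi^1_{(2)}\rhd m$ coming precisely from this last step.

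The inside-out substitution into (\ref{equa_comp}) is mechanical. The only step that requires genuine care --- and which I expect to be the main obstacle --- is the one in the last paragraph: checking that the abstractly defined $\comp$ on $\srdiecko M\tp A$, once evaluated at the generator $C$, really is plain inner composition with the target twisted by $M$, i.e.\ that no stray associativity constraint sneaks in through the identifications $\srdiecko M\tp\srdiecko I\cong\srdiecko M\tp A$ and $\srdiecko(M\tp I)\cong\srdiecko M$. Everything else is forced by formula~(\ref{equa_comp}) together with the fact (Lemma~\ref{lem_explicit_srdiecko}) that $C$ generates $\innH$.
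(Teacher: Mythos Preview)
Your proposal is correct and is exactly the intended derivation: the paper states this result as a corollary with no separate proof, precisely because both formulas drop out of (\ref{equa_comp}) upon substituting $g=l_a$ (resp.\ $g=l_{a\tp m}$) and $f=l_b$ at the generator $C$, just as you describe. Your caution about the unit identifications $\srdiecko M\tp\srdiecko I\cong\srdiecko M\tp A$ and $\srdiecko(M\tp I)\cong\srdiecko M$ is well placed but harmless here, since the unit constraints in $\LM H$ are strict (identity maps on underlying vector spaces) and hence introduce no associator, exactly as you argue via (\ref{B4}).
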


\begin{claim}\label{claim_eps_on_Phi}
If we apply $\eps$ to any component of $\Phi$ or $\phi$, we get $1$. For instance
$$ \Phi^1 \tp \Phi^2 \tp \eps \Phi^3 = 1_H \tp 1_H. $$
\end{claim}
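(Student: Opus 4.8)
The plan is to reduce the whole claim to the quasi-bialgebra axioms \eqref{B2}--\eqref{B4}; the antipode plays no role. The middle component of $\Phi$ is nothing but axiom \eqref{B4}, i.e. $(\id\otimes\eps\otimes\id)\Phi=1\otimes 1$. For the two outer components of $\Phi$ I would feed the pentagon \eqref{B2} into a map that applies $\eps$ to a single tensor leg. The point that makes this work is that for any $n$ and any $1\le i\le n$ the map $\id^{\otimes(i-1)}\otimes\eps\otimes\id^{\otimes(n-i)}\colon H^{\otimes n}\to H^{\otimes(n-1)}$ is an algebra homomorphism (a tensor product of algebra maps), so it carries each side of \eqref{B2} to the product of the images of the individual factors.

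To get $(\eps\otimes\id\otimes\id)\Phi=1\otimes 1$, I apply $\id\otimes\eps\otimes\id\otimes\id$ to \eqref{B2}. On the left-hand side, $(\id^{\otimes2}\otimes\Delta)(\Phi)$ goes to $1\otimes 1\otimes 1$ (use \eqref{B4}, then comultiply the surviving leg), while $(\Delta\otimes\id^{\otimes2})(\Phi)$ goes to $\Phi$ (by the counit axiom $(\id\otimes\eps)\Delta=\id$ from \eqref{B3}); so the left-hand side becomes $\Phi$. On the right-hand side, $\Phi\otimes 1$ goes to $1\otimes 1\otimes 1$ (by \eqref{B4}), $(\id\otimes\Delta\otimes\id)(\Phi)$ goes to $\Phi$ (by $(\eps\otimes\id)\Delta=\id$), and $1\otimes\Phi$ goes to $1\otimes\big((\eps\otimes\id\otimes\id)\Phi\big)$. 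Hence \eqref{B2} collapses to $\Phi=\big(1\otimes(\eps\otimes\id\otimes\id)\Phi\big)\cdot\Phi$; cancelling the invertible $\Phi$ on the right and then applying $\eps\otimes\id\otimes\id$ to strip the leading $1$ gives $(\eps\otimes\id\otimes\id)\Phi=1\otimes 1$. The identity $(\id\otimes\id\otimes\eps)\Phi=1\otimes 1$ follows symmetrically by applying $\id\otimes\id\otimes\eps\otimes\id$ to \eqref{B2}: now the two left factors become $\Phi$ and $1\otimes 1\otimes 1$, and on the right $1\otimes\Phi\mapsto 1\otimes 1\otimes 1$, $(\id\otimes\Delta\otimes\id)(\Phi)\mapsto\Phi$ and $\Phi\otimes 1\mapsto\big((\id\otimes\id\otimes\eps)\Phi\big)\otimes 1$, so \eqref{B2} reads $\Phi=\Phi\cdot\big((\id\otimes\id\otimes\eps)\Phi\otimes 1\big)$ and we cancel $\Phi$ on the left.

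For $\phi=\Phi^{-1}$ I would simply note that each of the three maps $H^{\otimes3}\to H^{\otimes2}$ that kills one leg by $\eps$ is an algebra homomorphism sending $1\otimes 1\otimes 1$ to $1\otimes 1$. Applying such a map to the identity $\Phi\cdot\phi=1\otimes 1\otimes 1$ and using that it already sends $\Phi$ to $1\otimes 1$ (by the three cases just proved) forces it to send $\phi$ to $1\otimes 1$ as well.

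There is no real obstacle; this is the standard quasi-bialgebra computation (cf.\ Drinfeld). The only things requiring care are the bookkeeping of which counit axiom, \eqref{B3} or \eqref{B4}, makes each of the five factors of the pentagon degenerate, and the elementary observation that $1\otimes a\otimes b=1\otimes 1\otimes 1$ in $H^{\otimes3}$ implies $a\otimes b=1\otimes 1$ upon applying $\eps\otimes\id\otimes\id$ (since $\eps(1)=1$).
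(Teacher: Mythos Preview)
Your argument is correct: the middle leg is axiom \eqref{B4}, the two outer legs follow by hitting the pentagon \eqref{B2} with $\id\otimes\eps\otimes\id\otimes\id$ (resp.\ $\id\otimes\id\otimes\eps\otimes\id$) and cancelling the invertible $\Phi$, and the $\phi$ cases follow because each partial counit map is an algebra homomorphism. The bookkeeping you give checks out.

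The paper takes a different, one-line route: it observes that, e.g., $\Phi^1\otimes\Phi^2\otimes\eps\Phi^3=1\otimes1$ is exactly the statement that the triangle
\[
(M\otimes N)\otimes\field\xrightarrow{\Phi}M\otimes(N\otimes\field)\to M\otimes N
\]
commutes, and this holds because $\catH$ is a monoidal category. In other words, the paper appeals directly to the coherence theorem (Kelly's result that the single triangle axiom \eqref{B4} together with the pentagon forces the other two triangles to commute), whereas you have essentially \emph{reproved} Kelly's implication by hand in the quasi-bialgebra language. Your approach is more self-contained and makes the dependence on \eqref{B2}--\eqref{B4} explicit; the paper's approach is shorter but presupposes the reader knows the monoidal coherence fact.
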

\begin{proof}
The above formula corresponds to the diagram
$$
\begin{tikzpicture}
	\node(vzdx) at (4, 0){};
	\node(vzdy) at (0, -1.5){};
	
	\node(a) {$(M \tp N ) \tp \field $};
	\node(b) at ($(a)+(vzdx)$) {$M \tp ( N \tp \field )$};
	\node(c) at ($(b)+(vzdy)$) {$M\tp N$};
	
	\draw[->] (a)-- node[above] {$\Phi$} (b);
	\draw[->] (a)-- node[below] {$\cong$} (c);
	\draw[->] (b)-- node[right] {$\cong$} (c);	
\end{tikzpicture}
$$
that is commutative since $\catH$ is a monoidal category. 
\end{proof}

\begin{lem}\label{lem_kopec_formul}
For any $X\in \catH $, the action $\sipka_X: A\tp X \to X$ is given by
\begin{equation}
 a \sipka x \, = \, \Phi^1 \cdot a \cdot S\Phi^2 \cdot \alpha \cdot \Phi^3 \rhd x . 
\end{equation}
In particular, the augmentation $\eps_A : A\to \field$ is 
\begin{equation}
 \eps_A(a) = \eps(a) \cdot \eps(\alpha).
\end{equation}

More generally, if $M \in \catH$, the formula for $\Diamond_{M,X} :\srdiecko M \tp X \to X \tp M$ is 
\begin{equation}\label{explicit_Diamond}
\Diamond_{M,X}\Big( (a\tp m ) \tp x \Big) = \Big( \Phi^1_{(1)} \cdot a \cdot S\Phi^2 \cdot \alpha \cdot \Phi^3 \rhd x \Big) \tp \Big( \Phi^1_{(2)} \rhd m \Big) 
\end{equation}
and the natural projection $\pi_M: \srdiecko M \to M$ is
\begin{equation}\label{explicit_pi}
\pi_M( a\tp m) \, = \, \eps( a\cdot \alpha) \cdot m.
\end{equation}
\end{lem}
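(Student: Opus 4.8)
The plan is to observe that all four formulas simply unwind the \emph{definitions} of the maps involved, feeding in the already-known explicit form (\ref{def_eeps}) of the counit $\eeps$ of the closing adjunction. Recall that $\sipka_X$ is by construction the composite $A\tp X\xto{\alpha_X\tp\id_X}\innH(X,X)\tp X\xto{\eeps_{X,X}}X$, where $\alpha_X\colon A\to\innH(X,X)$ is the universal wedge of the end $A=\int_X\innH(X,X)$; that $\eps_A=\sipka_I$; that $\Diamond_{M,X}$ is the analogous composite $\srdiecko M\tp X\xto{\alpha^M_X\tp\id_X}\innH(X,X\tp M)\tp X\xto{\eeps_{X\tp M,X}}X\tp M$ with $\alpha^M_X$ the universal wedge of $\srdiecko M$; and that $\pi_M=\Diamond_{M,I}$. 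The one remaining input is a handle on $\alpha_X$ and $\alpha^M_X$, and here the decisive point is that $\sipka_X$ and $\Diamond_{M,X}$ are \emph{natural in $X$} (part \ref{lemA3} of Lemma \ref{lemA}, and the corresponding statement for $\Diamond$ in Section \ref{section_srdiecko}). Combined with the remark that for the left regular module $C$ and any $x\in X$ the map $f_x\colon C\to X$, $h\mapsto h\rhd x$, is $H$-linear with $f_x(1_C)=x$, this reduces the whole lemma to the single case $X=C$.

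For $X=C$: by Lemma \ref{lem_explicit_srdiecko}, and because (via the ``generated by $C$'' proposition of Section \ref{section_nonsense}) the end of $\innH$ coincides with the end of its restriction to $C$, the end $A$ is realised as the subobject $\{l_a\mid a\in H\}\subseteq\innH(C,C)=Lin(H,H)$ with $\alpha_C$ the tautological inclusion; likewise $\alpha^M_C\colon\srdiecko M=\{l_{a\tp m}\}\hookrightarrow\innH(C,C\tp M)$ in the identification of Corollary \ref{cor_product_formulas} (so $l_{a\tp m}\colon h\mapsto(a\cdot h)\tp m$). Hence $a\sipka 1_C=\eeps_{C,C}(l_a\tp 1_C)$ and $\Diamond_{M,C}\big((a\tp m)\tp 1_C\big)=\eeps_{C\tp M,C}(l_{a\tp m}\tp 1_C)$. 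Substituting into (\ref{def_eeps}), and recalling that the $H$-action on $C$ is left multiplication while that on $C\tp M$ uses $\Delta$, one reads off $a\sipka 1_C=\Phi^1\cdot a\cdot S\Phi^2\cdot\alpha\cdot\Phi^3$ and $\Diamond_{M,C}((a\tp m)\tp 1_C)=\big(\Phi^1_{(1)}\cdot a\cdot S\Phi^2\cdot\alpha\cdot\Phi^3\big)\tp\big(\Phi^1_{(2)}\rhd m\big)$. Applying naturality in $X$ along $f_x$ (and using $f_x(h)=h\rhd x$) transports these to $a\sipka x=\Phi^1\cdot a\cdot S\Phi^2\cdot\alpha\cdot\Phi^3\rhd x$ and to exactly (\ref{explicit_Diamond}).

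Finally, take $X=I=\field$, on which $H$ acts through $\eps$. Then $\eps_A(a)=a\sipka 1_I=\eps(\Phi^1\cdot a\cdot S\Phi^2\cdot\alpha\cdot\Phi^3)$; since $\eps$ is an algebra morphism, $\eps\circ S=\eps$, and $\sum\eps(\Phi^1)\eps(\Phi^2)\eps(\Phi^3)=1$ (apply $\eps$ in two of the slots of $\Phi$, using Claim \ref{claim_eps_on_Phi}, or (\ref{B4}) and the counit axiom), this is $\eps(a)\eps(\alpha)$. Similarly $\pi_M(a\tp m)=\Diamond_{M,I}((a\tp m)\tp 1_I)=\eps\big(\Phi^1_{(1)}\cdot a\cdot S\Phi^2\cdot\alpha\cdot\Phi^3\big)\cdot(\Phi^1_{(2)}\rhd m)$; the counit axiom in the form $\sum\eps(\Phi^1_{(1)})\Phi^1_{(2)}=\Phi^1$, followed by the same two identities, collapses this to $\eps(a)\eps(\alpha)\cdot m=\eps(a\cdot\alpha)\cdot m$.

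I expect no deep difficulty. The one point that really needs care is the claim that at $X=C$ the wedges $\alpha_C$, $\alpha^M_C$ are the tautological inclusions, i.e.\ that the concrete description of the ends in Lemma \ref{lem_explicit_srdiecko} is the one produced by the universal wedges through which $\sipka$ and $\Diamond$ were defined — together with the routine but error-prone bookkeeping of the non-strict coproduct when evaluating the $H$-action on $C\tp M$. Once these are settled, the remainder is direct substitution into (\ref{def_eeps}) and elementary quasi-Hopf identities.
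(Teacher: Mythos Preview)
Your proof is correct and follows the same approach as the paper: unwind the definition of $\Diamond_{M,X}$ as the composite of the universal wedge with $\eeps_{X\tp M,X}$, substitute formula (\ref{def_eeps}), and then specialise to $X=I$ (invoking Claim \ref{claim_eps_on_Phi}) for $\pi_M$ and $\eps_A$. The only cosmetic difference is that the paper asserts directly that the wedge at an arbitrary $X$ sends $a\tp m$ to the map $x\mapsto(a\rhd x)\tp m$ and computes there, whereas you compute only at $X=C$ and transport to general $X$ via naturality of $\Diamond$ along $f_x\colon C\to X$ --- which is in effect a careful justification of the step the paper takes for granted.
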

\begin{proof}
Let's prove (\ref{explicit_Diamond}). The canonical map
$$ \srdiecko M = \int_{X\in \catH } \innH( X, X\tp M)  \;\longrightarrow\; \innH(X, X\tp M) $$
maps $a\tp m \in \srdiecko M$ to the ``left multiplication'' 
$$ l^X_{a\tp m} :\; X\to X\tp M :\; x \mapsto (a \rhd x) \tp m .$$
Now one just uses this and the formula (\ref{def_eeps}) to compute $\Diamond_{M,X}$ as the composition
$$ \Diamond_{M,X} : \srdiecko M \tp X \to \innH(X,X\tp M) \tp X \xto{ \eeps_{X\tp M, X} } X \tp M . $$ 
To show (\ref{explicit_pi}) just realize that $\pi_M(a\tp m) = \Diamond_{M,\field}\big( (a\tp m) \tp 1_\field \big)$
and apply Claim \ref{claim_eps_on_Phi} to get rid of $\Phi$.
\end{proof}

\bibliographystyle{amsplain}

\end{document}